\newcommand {\abs}[1]{\lvert#1\rvert}
\newcommand {\Be}{{B}}
\newcommand {\C}{{\mathbb C}}
\newcommand {\ce}{\mathrm{c}}
\newcommand {\ud}{\mathrm{d}}
\newcommand {\ue}{\mathrm{e}}
\newcommand {\Ell}{L}
\newcommand {\Ellp}{L^{p}}
\newcommand {\Ellpprime}{L^{p'}}
\newcommand {\Ellq}{L^{q}}
\newcommand {\Ellqprime}{L^{q'}}
\newcommand {\Ellr}{L^{r}}
\newcommand {\Ellone}{L^{1}}
\newcommand {\Elltwo}{L^{2}}
\newcommand {\Ellinfty}{L^{\infty}}
\newcommand {\F}{{\mathcal{F}}}
\newcommand {\Hr}{H}
\newcommand {\ui}{\mathrm{i}}
\newcommand {\I}{{\mathrm{I}}}
\newcommand {\ind}{{\mathbf{1}}}
\newcommand{\lb}{\langle}
\newcommand{\rb}{\rangle}
\newcommand {\La}{{\mathcal{L}}}
\newcommand {\calL}{{\mathcal{L}}}
\newcommand {\N}{{{\mathbb N}}}
\newcommand {\norm}[1]{\left\|#1\right\|}
\newcommand{\one}{{{\bf 1}}}
\newcommand {\ph}{{\varphi}}
\newcommand {\Pa}{{\mathcal{P}}}
\newcommand {\R}{{\mathbb R}}
\newcommand {\Rd}{{\mathbb{R}^{d}}}
\newcommand {\Rad}{{\mathrm{Rad}}}
\newcommand {\supp}{{\mathrm{supp}}}
\newcommand {\Sw}{\mathcal{S}}
\newcommand {\Cs}{{\mathscr C}}
\newcommand {\schatten}{\Cs}
\newcommand {\T}{{\mathbb T}}
\newcommand {\w}{{\omega}}
\newcommand {\Z}{{{\mathbb Z}}}
\newcommand {\vanish}[1]{\relax}
\newcommand{\wh}{\widehat}
\renewcommand{\restriction}{\mathord{\upharpoonright}}
\newtheorem{theorem}{Theorem}[section]
\newtheorem{lemma}[theorem]{Lemma}
\newtheorem{proposition}[theorem]{Proposition}
\newtheorem{corollary}[theorem]{Corollary}
\newtheorem{problem}[theorem]{Problem}
\theoremstyle{definition}
\newtheorem{remark}[theorem]{Remark}
\newtheorem{example}[theorem]{Example}
\numberwithin{equation}{section}
\title[Fourier multiplier theorems involving type and cotype]{Fourier multiplier theorems involving type and cotype}
\author{Jan Rozendaal}
\address{Institute of Mathematics Polish Academy of Sciences\\
ul.~\'{S}niadeckich 8\\
00-656 Warsaw\\
Poland}
\email{janrozendaalmath@gmail.com}
\author{Mark Veraar}
\address{Delft Institute of Applied
Mathematics\\
Delft University of Technology\\
P.O.~Box 5031\\
2628 CD Delft\\
The Netherlands}
\email{M.C.Veraar@tudelft.nl}
\subjclass[2010]{Primary: 42B15; Secondary: 42B35, 46B20, 46E40, 47B38}
\keywords{Operator-valued Fourier multipliers, type and cotype, Fourier type, H\"ormander condition, $\gamma$-boundedness}
\thanks{The second author is supported by the VIDI subsidy 639.032.427 of the Netherlands Organisation for Scientific Research (NWO)}
\begin{document}

\begin{abstract}
In this paper we develop the theory of Fourier multiplier operators $T_{m}:L^{p}(\R^{d};X)\to L^{q}(\R^{d};Y)$, for Banach spaces $X$ and $Y$, $1\leq p\leq q\leq \infty$ and $m:\R^d\to \mathcal{L}(X,Y)$ an operator-valued symbol. The case $p=q$ has been studied extensively since the 1980's, but far less is known for $p<q$. In the scalar setting one can deduce results for $p<q$ from the case $p=q$. However, in the vector-valued setting this leads to restrictions both on the smoothness of the multiplier and on the class of Banach spaces. For example, one often needs that $X$ and $Y$ are UMD spaces and that $m$ satisfies a smoothness condition.

We show that for $p<q$ other geometric conditions on $X$ and $Y$, such as the notions of type and cotype, can be used to study Fourier multipliers. Moreover, we obtain boundedness results for $T_m$ without any smoothness properties of $m$. Under smoothness conditions the boundedness results can be extrapolated to other values of $p$ and $q$ as long as $\tfrac{1}{p}-\tfrac{1}{q}$ remains constant.
\end{abstract}

\maketitle

\section{Introduction}

Fourier multiplier operators play a major role in analysis and in particular in the theory of partial differential equations. Such operators are of the form
\begin{align*}
T_{m}(f)  = \F^{-1} (m \F f),
\end{align*}
where $\F$ denotes the Fourier transform and $m$ is a function on $\Rd$. Usually one is interested in the boundedness of  $T_m:\Ellp(\Rd)\to \Ellq(\Rd)$ with $1\leq p\leq q\leq \infty$ (the case $p>q$ is trivial by \cite[Theorem 1.1]{Hormander60}). The class of Fourier multiplier operators coincides with the class of singular integral operators of convolution type $f\mapsto K*f$, where $K$ is a tempered distribution.

The simplest class of examples of Fourier multipliers can be obtained by taking $p=q=2$. Then $T_m$ is bounded if and only if $m\in \Ellinfty(\Rd)$, and $\|T_m\|_{\La(\Elltwo(\Rd))} = \|m\|_{\Ellinfty(\Rd)}$. For $p=q=1$ and $p=q=\infty$ one obtains only trivial multipliers, namely Fourier transforms of bounded measures. The case where $p=q\in (1, \infty)\setminus\{2\}$ is highly nontrivial. In general only sufficient conditions on $m$ are known that guarantee that $T_m$ is bounded, although also here it is necessary that $m\in \Ellinfty(\Rd)$.

In the classical paper \cite{Hormander60} H\"ormander studied Fourier multipliers and singular integral operators of convolution type. In particular, he showed that if $1<p\leq 2\leq q<\infty$, then
\begin{align}\label{weakLrcondition}
T_m:\Ellp(\Rd)\to \Ellq(\Rd) \text{ is bounded if } m\in \Ell^{r,\infty}(\Rd)\text{ with }\tfrac{1}{r} = \tfrac{1}{p}-\tfrac{1}{q}.
\end{align}
Here $\Ell^{r,\infty}(\Rd)$ denotes the weak $L^r$-space.
In particular, every $m$ with $|m(\xi)|\leq C|\xi|^{-d/r}$ satisfies $m\in \Ell^{r,\infty}(\Rd)$.
It was also shown that the condition $p\leq 2\leq q$ is necessary here. More precisely, if there exists a function $F$ such that $\{F>0\}$ has nonzero measure and for all $m:\R^d\to \R$ with $|m|\leq |F|$, $T_m:\Ellp(\Rd)\to \Ellq(\Rd) $ is bounded, then $p\leq 2\leq q$.

H\"ormander also introduced an integral/smoothness condition on the kernel $K$ which allows one to extrapolate the boundedness of $T_{m}$ from $\Ell^{p_0}(\Rd)$ to $\Ell^{q_0}(\Rd)$ for some $1<p_{0}\leq q_{0}<\infty$ to boundedness of $T_{m}$ from $\Ellp(\Rd)$ to $\Ellq(\Rd)$ for all $1<p\leq q<\infty$ satisfying $\tfrac{1}{p} - \tfrac{1}{q} = \tfrac{1}{p_0} - \tfrac{1}{q_0}$. This led to extensions of the theory of Calder\'on and Zygmund in \cite{Calderon-Zygmund52}. In the case $p_0 = q_0$ it was shown that the smoothness condition on the kernel $K$ can be translated to a smoothness condition on the multiplier $m$ which is strong enough to deduce the classical Mihlin multiplier theorem. From here the field of harmonic analysis has quickly developed itself and this development is still ongoing. We refer to \cite{Grafakos08,Grafakos09,Katznelson04,Stein93} and references therein for a treatment and the history of the subject.

In the vector-valued setting it was shown in \cite{BeCaPa62} that the extrapolation results of H\"ormander for $p=q$ still holds. However, there is a catch:
\begin{itemize}
\item even for $p=q=2$ one does not have $T_m\in \La(\Elltwo(\Rd;X))$ for general $m\in \Ellinfty(\Rd)$ unless $X$ is a Hilbert space.
\end{itemize}
In \cite{Burkholder83} it was shown that $T_m\in \La(\Ellp(\R^d;X))$ for $m(\xi) := \text{sign}(\xi)$ if $X$ satisfies the so-called UMD condition. In \cite{Bourgain83} it was realized that this yields a characterization of the UMD property. In \cite{Bourgain86}, \cite{McConnell84}, \cite{Zimmermann89} versions of the Littlewood--Paley theorem and the Mihlin multiplier theorem were established in the UMD setting. These are very useful for operator theory and evolution equations (see for example \cite{Dore-Venni87}).

In the vector-valued setting it is rather natural to allow $m$ to take values in the space $\La(X,Y)$ of bounded operators from $X$ to $Y$. Pisier and Le Merdy showed that the natural analogues of the Mihlin multiplier theorem do not extend to this setting unless $X$ has cotype $2$ and $Y$ has type $2$ (a proof was published only later on in \cite{Arendt-Bu02}). On the other hand there was a need for such extensions as it was realized that multiplier theorems with operator-valued symbols are useful in the stability theory and the regularity theory for evolution equations (see \cite{Amann97, Hieber99, Weis97}). The missing ingredient for a natural analogue of the Mihlin multiplier theorem turned out to be $R$-boundedness, which is a strengthening of uniform boundedness (see \cite{BerksonGillespie94, ClPaSuWi00}). In \cite{Weis2001} it was shown that Mihlin's theorem holds for $m:\R\to \La(X)$ if the sets
\begin{align*}
\left\{m(\xi)\mid \xi\in\R\setminus\{0\}\right\} \ \text{and} \  \left\{\xi m'(\xi)\mid \xi\in \R\setminus\{0\}\right\}
\end{align*}
are $R$-bounded. Conversely, the $R$-boundedness of $\left\{m(\xi)\mid \xi\in \R\setminus\{0\}\right\}$ is also necessary. These results were used to characterize maximal $\Ellp$-regularity, and were then used by many authors in evolution equations, partial differential equations, operator theory and harmonic analysis (see the surveys and lecture notes \cite{Amann97, DeHiPr03, Kalton-Weis01, Kunstmann-Weis04}).
A generalization to multipliers on $\R^d$ instead of $\R$ was given in \cite{HaHeNo02} and \cite{Strkalj-Weis07}, but in some cases one additionally needs the so-called property $(\alpha)$ of the Banach space (which holds for all UMD lattices). Improvements of the multiplier theorems under additional geometric assumptions have been studied in \cite{Girardi-WeisJFA} and \cite{Shahmurov10} assuming Fourier type and in \cite{Hytonen10} assuming type and cotype conditions.

In this article we complement the theory of operator-valued Fourier multipliers by studying the boundedness of $T_{m}$ from $\Ell^{p}(\Rd;X)$ to $\Ell^{q}(\Rd;Y)$ for $p<q$. One of our main results is formulated under $\gamma$-boundedness assumptions on  $\{|\xi|^{\frac{d}{r}} m(\xi)\mid\xi\in \Rd\setminus\{0\}\}$. We note that $R$-boundedness implies $\gamma$-boundedness (see Subsection \ref{gamma-bddness}). The result is as follows (see Theorem \ref{thm:sharpintegrability} for the proof):

\begin{theorem}\label{thm:sharpintegrabilityintro}
Let $X$ be a Banach space with type $p_0\in(1,2]$ and $Y$ a Banach space with cotype $q_0\in[2,\infty)$, and let $p\in (1, p_0)$, $q\in (q_0, \infty)$. Let $r\in[1,\infty]$ be such that $\frac{1}{r}=\frac{1}{p}-\frac{1}{q}$. If $m:\Rd\setminus\{0\}\to\La(X,Y)$ is $X$-strongly measurable and
\begin{align}\label{eq:gammabddintro}
\{\abs{\xi}^{\frac{d}{r}}m(\xi)\mid \xi\in\Rd\setminus\{0\}\}\subseteq\La(X,Y)
\end{align}
is $\gamma$-bounded, then $T_{m}:\Ellp(\Rd;X)\to \Ellq(\Rd;Y)$ is bounded.
Moreover, if $p_0 = 2$ (or $q_0 = 2)$, then one can also take $p = 2$ (or $q=2$).
\end{theorem}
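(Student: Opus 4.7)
My plan is to split the multiplier as $m(\xi) = \abs{\xi}^{-d/r}\, g(\xi)$ with $g(\xi) := \abs{\xi}^{d/r} m(\xi)$, so that $\{g(\xi)\}$ is $\gamma$-bounded by hypothesis. The exponent relation $\tfrac{1}{r} = \tfrac{1}{p} - \tfrac{1}{q}$ is precisely what a scalar Riesz potential $I_{d/r}$ requires to map $\Ellp$ to $\Ellq$ by Hardy--Littlewood--Sobolev, but one cannot upgrade this pointwise to $X$- and $Y$-valued functions without further structure. Instead, I would factor $T_m$ through $\gamma$-radonifying spaces, where the $\gamma$-bounded part acts boundedly by a Kalton--Weis-type $\gamma$-multiplier theorem, and where type of $X$ and cotype of $Y$ supply the entry and exit maps between the Bochner spaces and the $\gamma$-spaces.

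After reducing to Schwartz $f$ and, if needed, truncating $m$ to an annulus $\{\delta<\abs{\xi}<1/\delta\}$ with constants uniform in $\delta$, I aim to realise $T_m$ as the composition
\begin{align*}
\Ellp(\Rd;X) \xrightarrow{\,\F\,} \gamma\bigl(L^{2}(\Rd,\mu_1);X\bigr) \xrightarrow{\,g\cdot\,} \gamma\bigl(L^{2}(\Rd,\mu_2);Y\bigr) \xrightarrow{\,\F^{-1}\,} \Ellq(\Rd;Y),
\end{align*}
with weights related by $\mu_2 = \abs{\xi}^{2d/r}\mu_1$, so that the singular factor $\abs{\xi}^{-d/r}$ moves between the two $\gamma$-spaces as an isometry. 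The first arrow should be a ``$\gamma$-Fourier'' embedding coming from type $p_0$ of $X$ for $p<p_0$, obtained by applying the defining type-$p_0$ inequality to a random/Gaussian discretisation of $\hat f$; crucially, since only type rather than Fourier type is assumed, a Hausdorff--Young route is unavailable and the embedding must land in a $\gamma$-space. The third arrow is dual and uses cotype $q_0$ of $Y$ for $q>q_0$. The middle arrow is the pointwise $\gamma$-multiplier step, whose norm is controlled by the $\gamma$-bound of \eqref{eq:gammabddintro} and is insensitive to the common underlying Hilbert-space structure once the weight on the $X$-side has been shifted by $\abs{\xi}^{-2d/r}$ to the $Y$-side.

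The principal obstacle, in my view, is establishing the two weighted $\gamma$-Fourier embeddings with the precise Riesz-potential weights under only type and cotype hypotheses; the required cancellation of weights across the middle arrow is exactly what forces $\tfrac{1}{p}-\tfrac{1}{q}=\tfrac{1}{r}$, and matching these exponents without additional smoothness of $m$ is the delicate point. The endpoint cases ($p=2$ when $p_0=2$, or $q=2$ when $q_0=2$) should then drop out because type $2$ of $X$ reduces the first embedding to a (weighted) Plancherel identity, making strict inequality $p<p_0$ superfluous on that side, and symmetrically for cotype $2$ of $Y$.
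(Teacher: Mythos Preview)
Your proposal is correct and follows essentially the same route as the paper. The paper's proof factors
\[
\Ellp(\Rd;X)=\dot{\Hr}^{0}_{p}(\Rd;X)\xrightarrow{T_{m_{1}}}\dot{\Hr}^{\frac{d}{p}-\frac{d}{2}}_{p}(\Rd;X)\hookrightarrow\gamma(\Rd;X)\xrightarrow{T_{|\xi|^{d/r}m(\xi)}}\gamma(\Rd;Y)\hookrightarrow\dot{\Hr}^{\frac{d}{q}-\frac{d}{2}}_{q}(\Rd;Y)\xrightarrow{}\Ellq(\Rd;Y),
\]
where $m_{1}(\xi)=|\xi|^{\frac{d}{2}-\frac{d}{p}}$, invoking known homogeneous Bessel-space embeddings into and out of $\gamma(\Rd;\cdot)$ under type $p_{0}>p$ and cotype $q_{0}<q$ (with the endpoint $p=2$ resp.\ $q=2$ handled by $\Elltwo\hookrightarrow\gamma$ when $p_{0}=2$ resp.\ $q_{0}=2$), together with the Kalton--Weis $\gamma$-multiplier theorem and the fact that $\F$ is isometric on $\gamma$-spaces. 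Your weighted formulation $\gamma(L^{2}(\Rd,\mu_{i});\cdot)$ with $\mu_{2}=|\xi|^{2d/r}\mu_{1}$ is just a repackaging of the same factorisation: taking $\mu_{1}=|\xi|^{d-2d/p}\,\ud\xi$ makes your first arrow exactly the embedding $\dot{\Hr}^{\frac{d}{p}-\frac{d}{2}}_{p}(\Rd;X)\hookrightarrow\gamma(\Rd;X)$ after conjugation by $\F$, and the relation $\tfrac{2d}{r}=\tfrac{2d}{p}-\tfrac{2d}{q}$ then forces $\mu_{2}=|\xi|^{d-2d/q}\,\ud\xi$, matching the $Y$-side embedding.
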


The condition $p\leq 2\leq q$ cannot be avoided in such results (see below \eqref{weakLrcondition}). Note that no smoothness on $m$ is required. Theorem \ref{thm:sharpintegrabilityintro} should be compared to the sufficient condition in \eqref{weakLrcondition} due to H\"{o}rmander in the case where $X=Y=\C$. We will give an example which shows that the $\gamma$-boundedness condition \eqref{eq:gammabddintro} cannot be avoided in general. Moreover, we obtain several converse results stating that type and cotype are necessary.

We note that, in case $m$ is scalar-valued and $X = Y$, the $\gamma$-boundedness assumption in Theorem \ref{thm:sharpintegrabilityintro} reduces to the uniform boundedness of \eqref{eq:gammabddintro}. Even in this setting of scalar multipliers our results appear to be new.

In Theorem \ref{multiplier Bessel spaces} we obtain a variant of Theorem \ref{thm:sharpintegrabilityintro} for $p$-convex and $q$-concave Banach lattices, where one can take $p = p_0$ and $q=q_0$. In \cite{Rozendaal-Veraar16Besov} we will deduce multiplier results similar to Theorem \ref{thm:sharpintegrabilityintro} in the Besov scale, where one can let $p = p_0$ and $q=q_0$ for Banach spaces $X$ and $Y$ with type $p$ and cotype $q$.

A vector-valued generalization of \eqref{weakLrcondition} is presented in Theorem \ref{Lp-Lq multipliers Fourier type2}. We show that if $X$ has Fourier type $p_{0}>p$ and $Y$ has Fourier type $q_{0}'>q'$, then
\[
\|T_{m}\|_{\La(\Ellp(\Rd;X),\Ellq(\Rd;Y))}\leq C\norm{\|m(\cdot)\|_{\La(X,Y)}}_{\Ell^{r,\infty}(\Rd)},
\]
where $\frac1r = \frac1p - \frac1q$. We show that in this result the Fourier type assumption is necessary. It should be noted that for many spaces (including all $\Ellr$-spaces for $r\in[1,\infty)\setminus\{2\}$), working with Fourier type yields more restrictive results in terms of the underlying parameters than working with type and cotype (see Subsection \ref{subsec:typecotype} for a discussion of the differences between Fourier type and (co)type).

The exponents $p$ and $q$ in Theorem \ref{thm:sharpintegrabilityintro} are fixed by the geometry of the underlying Banach spaces. However, Corollary \ref{cor:extrapol m uniform} shows that under smoothness conditions on the multiplier, one can extend the boundedness result to all pairs $(\tilde{p},\tilde{q})$ satisfying $1<\tilde{p}\leq \tilde{q}<\infty$ and $\frac{1}{\tilde{p}} - \frac{1}{\tilde{q}} = \frac{1}{p} - \frac{1}{q} = \frac1r$. Here the required smoothness depends on the Fourier type of $X$ and $Y$ and on the number $r\in(1,\infty]$.  We note that even in the case where $X=Y=\C$, for $p<q$ we require less smoothness for the extrapolation than in the classical results (see Remark \ref{rem:less derivatives}).

We will mainly consider multiplier theorems on $\R^d$. There are two exceptions. In Remark \ref{rem:more general groups} we deduce a result for more general locally compact groups. Moreover, in Proposition \ref{prop:transference} we show how to transfer our results from $\R^d$ to the torus $\T^d$.  This result appears to be new even in the scalar setting. As an application of the latter we show that certain irregular Schur multipliers with sufficient decay are bounded on the Schatten class $\Cs^p$ for $p\in(1,\infty)$.

We have pointed out that questions about operator-valued Fourier multiplier theorems were originally motivated by  stability and regularity theory. We have already successfully applied our result to stability theory of $C_0$-semigroups, as will be presented in a forthcoming paper \cite{Rozendaal-Veraar17Stability}. In \cite{Rozendaal15b} the first-named author has also applied the Fourier multiplier theorems in this article to study the $\mathcal{H}^{\infty}$-calculus for generators of $C_{0}$-groups.

Other potential applications could be given to the theory of dispersive equations. For instance the classical Strichartz estimates can be viewed as operator-valued $\Ellp$-$\Ellq$-multiplier theorems. Here the multipliers are often not smooth, as is the case in our theory. More involved applications probably require extensions of our work to oscillatory integral operators, which would be a natural next step in the research on  vector-valued singular integrals from $\Ellp$ to $\Ellq$.

This article is organized as follows. In Section \ref{preliminaries} we discuss some preliminaries on the geometry of Banach spaces and on function space theory. In Section \ref{Fourier multipliers} we introduce Fourier multipliers and prove our main results on $\Ellp$-$\Ellq$-multipliers in the vector-valued setting. In Section \ref{extrapolation} we present an extension of the extrapolation result under H\"ormander--Mihlin conditions to the case $p\leq q$.

\subsection{Notation and terminology}

We write $\N:=\left\{1,2,3,\ldots\right\}$ for the natural numbers and $\N_{0}:=\N\cup\left\{0\right\}$.

We denote nonzero Banach spaces over the complex numbers by $X$ and $Y$. The space of bounded linear operators from $X$ to $Y$ is $\La(X,Y)$, and $\La(X):=\La(X,X)$. The identity operator on $X$ is denoted by $\mathrm{I}_{X}$.

For $p\in[1,\infty]$ and $(\Omega,\mu)$ a measure space, $\Ellp(\Omega;X)$ denotes the Bochner space of equivalence classes of strongly measurable, $p$-integrable, $X$-valued functions on $\Omega$. Moreover, $\Ell^{p,\infty}(\Omega;X)$ is the weak $\Ellp$-space of all $f:\Omega\to X$ for which
\begin{align}\label{eq:weakLpnotation}
\|f\|_{\Ell^{p,\infty}(\Omega;X)} := \sup_{\alpha>0} \alpha \lambda_f(\alpha)^{\frac1p}<\infty,
\end{align}
where $\lambda_f(\alpha) := \mu(\{s\in \Omega\mid\|f(s)\|_{X}>\alpha\})$ for $\alpha>0$.
In the case where $\Omega\subseteq\Rd$ we implicitly assume that $\mu$ is the Lebesgue measure. Often we will use the shorthand notations $\|\cdot\|_{p}$ and $\|\cdot\|_{p,\infty}$ for the $\Ellp$-norm and $\Ell^{p,\infty}$-norm.

The H\"{o}lder conjugate of $p$ is denoted by $p'$ and is defined by $1=\frac{1}{p}+\frac{1}{p'}$. We write $\ell^{p}$ for the space of $p$-summable sequences $(x_{k})_{k\in\N_{0}}\subseteq\C$, and denote by $\ell^{p}(\Z)$ the space of $p$-summable sequences $(x_{k})_{k\in\Z}\subseteq\C$.

We say that a function $m:\Omega\to\La(X,Y)$ is \emph{$X$-strongly measurable} if $\omega\mapsto m(\omega)x$ is a strongly measurable $Y$-valued map for all $x\in X$. We often identify a scalar function $m:\Rd\to\C$ with the operator-valued function $\widetilde{m}:\Rd\to\La(X)$ given by $\widetilde{m}(\xi):=m(\xi)\mathrm{I}_{X}$ for $\xi\in\Rd$.

The class of $X$-valued rapidly decreasing smooth functions on $\Rd$ (the Schwartz functions) is denoted by $\Sw(\Rd;X)$, and the space of $X$-valued tempered distributions by $\Sw'(\Rd;X)$. We write $\Sw(\Rd):=\Sw(\Rd;\C)$ and denote by $\lb \cdot,\cdot\rb:\Sw'(\Rd;X)\times \Sw(\Rd)\to X$ the $X$-valued duality between $\Sw'(\Rd;X)$ and $\Sw(\Rd)$. The Fourier transform of a $\Phi\in\Sw'(\Rd;X)$ is denoted by $\F \Phi$ or $\widehat{\Phi}$. If
$f\in\Ell^{1}(\Rd;X)$ then
\begin{align*}
\widehat{f}(\xi)=\F f(\xi):=\int_{\Rd}\ue^{-2\pi\ui \xi \cdot t}f(t)\,\ud t\qquad(\xi\in\Rd).
\end{align*}

A standard complex Gaussian random variable is a random variable $\gamma:\Omega\to\C$ of the form $\gamma=\frac{\gamma_{r}+\ui \gamma_{i}}{\sqrt{2}}$, where $(\Omega,\mathbb{P})$ is a probability space and $\gamma_{r},\gamma_{i}:\Omega\to\R$ are independent standard real Gaussians. A \emph{Gaussian sequence} is a (finite or infinite) sequence $(\gamma_{k})_{k}$ of independent standard complex Gaussian random variables on some probability space.

We will use the convention that a constant $C$ which appears multiple times in a chain of inequalities may vary from one occurrence to the next.

\section{Preliminaries}\label{preliminaries}

\subsection{Fourier type}\label{Fourier type}

We recall some background on the Fourier type of a Banach space. For these facts and for more on Fourier type see \cite{GaKaKoTo98, Pietsch-Wenzel98, HyNeVeWe16}.

A Banach space $X$ has \emph{Fourier type} $p\in[1,2]$ if the Fourier transform $\F$ is bounded from $\Ellp(\Rd;X)$ to $\Ellpprime(\Rd;X)$ for some (in which case it holds for all) $d\in\N$. We then write $\F_{p,X,d}:=\|\F\|_{\La(\Ellp(\Rd;X),\Ellpprime(\Rd;X))}$.

Each Banach space $X$ has Fourier type $1$ with $\F_{1,X,d}=1$ for all $d\in\N$. If $X$ has Fourier type $p\in[1,2]$ then $X$ has Fourier type $r$ with $\F_{r,X,d}\leq \F_{p,X,d}$ for all $r\in[1,p]$  and $d\in\N$. We say that $X$ has \emph{nontrivial Fourier type} if $X$ has Fourier type $p$ for some $p\in(1,2]$. In order to make our main results more transparent we will say that $X$ has Fourier cotype $p'$ whenever $X$ has Fourier type $p$.

Let $X$ be a Banach space, $r\in[1,\infty)$ and let $\Omega$ be a measure space. If $X$ has Fourier type $p\in[1,2]$ then $\Ellr(\Omega;X)$ has Fourier type $\min(p,r,r')$. In particular, $\Ellr(\Omega)$ has Fourier type $\min(r,r')$.

\subsection{Type and cotype\label{subsec:typecotype}}

We first recall some facts concerning the type and cotype of Banach spaces. For more on these notions and for unexplained results see \cite{Albiac-Kalton06}, \cite{DiJaTo95}, \cite{HyNeVeWe2} and \cite[Section 9.2]{Lindenstrauss-Tzafriri79}.

Let $X$ be a Banach space, $(\gamma_{n})_{n\in\N}$ a Gaussian sequence on a probability space $(\Omega,\mathbb{P})$ and let $p\in[1,2]$ and $q\in[2,\infty]$. We say that $X$ has \emph{(Gaussian) type} $p$ if there exists a constant $C\geq 0$ such that for all $m\in\N$ and all $x_{1},\ldots, x_{m}\in X$,
\begin{align}\label{type}
\Big(\mathbb{E}\Big\|\sum_{n=1}^{m}\gamma_{n}x_{n}\Big\|^{2}\Big)^{1/2}\leq C\Big(\sum_{n=1}^{m}\|x_{n}\|^{p}\Big)^{1/p}.
\end{align}
We say that $X$ has \emph{(Gaussian) cotype} $q$ if there exists a constant $C\geq 0$ such that for all $m\in\N$ and all $x_{1},\ldots, x_{m}\in X$,
\begin{align}\label{cotype}
\Big(\sum_{n=1}^{m}\|x_{n}\|^{q}\Big)^{1/q}\leq C\Big(\mathbb{E}\Big\|\sum_{n=1}^{m}\gamma_{n}x_{n}\Big\|^{2}\Big)^{1/2},
\end{align}
with the obvious modification for $q=\infty$.

The minimal constants $C$ in \eqref{type} and \eqref{cotype} are called the \emph{(Gaussian) type $p$ constant} and the \emph{(Gaussian) cotype $q$ constant} and will be denoted by $\tau_{p,X}$ and $c_{q,X}$. We say that $X$ has \emph{nontrivial type} if $X$ has type $p\in(1,2]$, and \emph{finite cotype} if $X$ has cotype $q\in[2,\infty)$.

Note that it is customary to replace the Gaussian sequence in \eqref{type} and \eqref{cotype} by a \emph{Rademacher sequence}, i.e.\ a sequence $(r_{n})_{n\in\N}$ of independent random variables on a probability space $(\Omega,\mathbb{P})$ that are uniformly distributed on $\{z\in\R\mid \abs{z}=1\}$. This does not change the class of spaces under consideration, only the minimal constants in \eqref{type} and \eqref{cotype} (see \cite[Chapter 12]{DiJaTo95}). We choose to work with Gaussian sequences because the Gaussian constants $\tau_{p,X}$ and $c_{q,X}$ occur naturally here.

Each Banach space $X$ has type $p=1$ and cotype $q=\infty$, with $\tau_{1,X}=c_{\infty,X}=1$. If $X$ has type $p$ and cotype $q$ then $X$ has type $r$ with $\tau_{r,X}\leq \tau_{p,X}$ for all $r\in[1,p]$ and cotype $s$ with $c_{s,X}\leq c_{q,X}$ for all $s\in[q,\infty]$. A Banach space $X$ is isomorphic to a Hilbert space if and only if $X$ has type $p=2$ and cotype $q=2$, by Kwapie\'n's theorem (see \cite[Theorem 7.4.1]{Albiac-Kalton06}). Also, a Banach space $X$ with nontrivial type has finite cotype by the Maurey--Pisier theorem (see \cite[Theorem 11.1.14]{Albiac-Kalton06}).

Let $X$ be a Banach space, $r\in[1,\infty)$ and let $\Omega$ be a measure space. If $X$ has type $p\in[1,2]$ and cotype $q\in[2,\infty)$ then $\Ellr(\Omega;X)$ has type $\min(p,r)$ and cotype $\max(q,r)$ (see \cite[Theorem 11.12]{DiJaTo95}).

A Banach space with Fourier type $p\in[1,2]$ has type $p$ and cotype $p'$ (see \cite{HyNeVeWe2}). By a result of Bourgain a Banach space has nontrivial type if and only if it has nontrivial Fourier type (see \cite[5.6.30]{Pietsch-Wenzel98}).

\subsection{Convexity and concavity}\label{convexity and concavity}

For the theory of Banach lattices we refer the reader to \cite{Lindenstrauss-Tzafriri79}. We repeat some of the definitions which will be used frequently.

Let $X$ be a Banach lattice and $p,q\in[1,\infty]$. We say that $X$ is \emph{$p$-convex} if there exists a constant $C\geq 0$ such that for all $n\in\N$ and all $x_{1},\ldots, x_{n}\in X$,
\begin{align}\label{p-convex}
\Big\|\Big(\sum_{k=1}^{n}\abs{x_{k}}^{p}\Big)^{1/p}\Big\|_{X}\leq C\Big(\sum_{k=1}^{n}\|x_{k}\|_{X}^{p}\Big)^{1/p},
\end{align}
with the obvious modification for $p=\infty$. We say that $X$ is \emph{$q$-concave} if there exists a constant $C\geq 0$ such that for all $n\in\N$ and all $x_{1},\ldots, x_{n}\in X$,
\begin{align}\label{q-concave}
\Big(\sum_{k=1}^{n}\|x_{k}\|_{X}^{q}\Big)^{1/q}\leq C\Big\|\Big(\sum_{k=1}^{n}\abs{x_{k}}^{q}\Big)^{1/q}\Big\|_{X},
\end{align}
with the obvious modification for $q=\infty$.

Every Banach lattice $X$ is $1$-convex and $\infty$-concave. If $X$ is $p$-convex and $q$-concave then it is $r$-convex and $s$-concave for all $r\in[1,p]$ and $s\in[q,\infty]$. By \cite[Proposition 1.f.3]{Lindenstrauss-Tzafriri79}, if $X$ is $q$-concave then it has cotype $\max(q,2)$, and if $X$ is $p$-convex and $q$-concave for some $q<\infty$ then $X$ has type $\min(p,2)$.

If $X$ is $p$-convex and $p'$-concave for $p\in[1,2]$ then $X$ has Fourier type $p$, by \cite[Proposition 2.2]{GTK96}. For $(\Omega,\mu)$ a measure space and $r\in[1,\infty)$, $\Ellr(\Omega,\mu)$ is an $r$-convex and $r$-concave Banach lattice. Moreover, if $X$ is $p$-convex and $q$-concave and $r\in [1, \infty)$, then $\Ellr(\Omega;X)$ is $\min(p,r)$-convex and $\max(q,r)$-concave.

Specific Banach lattices which we will consider are the \emph{Banach function spaces}. For the definition and details of these spaces we refer to \cite{Lin04}. If $X$ is a Banach function space over a measure space $(\Omega,\mu)$ and $Y$ is a Banach space, then $X(Y)$ consists of all $f:\Omega\to Y$ such that $\|f(\cdot)\|_{Y}\in X$, with the norm
\begin{align*}
\|f\|_{X(Y)}:=\norm{\|f(\cdot)\|_{Y}}_{X}\qquad(f\in X(Y)).
\end{align*}

If $f\in X(\Ellp(\Rd))$ for $p\in[1,\infty)$ and $d\in\N$ then we write $(\int_{\Rd}\abs{f(t)}^{p}\,\ud t)^{1/p}$ for the element of $X$ given by
\begin{align*}
\Big(\int_{\Rd}\abs{f(t)}^{p}\,\ud t\Big)^{1/p}(\w):=\Big(\int_{\Rd}\abs{f(\omega)(t)}^{p}\,\ud t\Big)^{1/p}\qquad(\omega\in\Omega).
\end{align*}
Note that $\|f\|_{X(\Ellp(\Rd))}=\|(\int_{\Rd}\abs{f(t)}^{p}\,\ud t)^{1/p}\|_{X}$

Let $f=\sum_{k=1}^{n}f_{k}\otimes x_{k}\in \Ellp(\Rd)\otimes X$, for $n\in\N$, $f_{1},\ldots, f_{n}\in \Ellp(\Rd)$ and $x_{1},\ldots, x_{n}\in X$. Then $f$ determines both an element $[t\mapsto \sum_{k=1}^{n}f_{k}(t)x_{k}]$ of $\Ellp(\Rd;X)$ and an element $[\omega\mapsto \sum_{k=1}^{n}x_{k}(\omega)f_{k}]$ of $X(\Ellp(\Rd))$. Throughout we will identify these and consider $f$ as an element of both $\Ellp(\Rd;X)$ and $X(\Ellp(\Rd))$. The following lemma, proved as in \cite[Theorem 3.9]{Veraar13} by using \eqref{p-convex} and \eqref{q-concave} on simple $X$-valued functions and then approximating, relates the $\Ellp(\Rd;X)$-norm and the $X(\Ellp(\Rd))$-norm of such an $f$ and will be used later.

\begin{lemma}\label{lem:relation different p-norms}
Let $X$ be a Banach function space, $p\in[1,\infty)$ and $f\in\Ellp(\Rd)\otimes X$.

\begin{itemize}
\item If $X$ is $p$-convex then
\begin{align*}
\|f\|_{X(\Ellp(\Rd))}\leq C\|f\|_{\Ellp(\Rd;X)},
\end{align*}
where $C\geq 0$ is as in \eqref{p-convex}.
\item If $X$ is $p$-concave then
\begin{align*}
\|f\|_{\Ellp(\Rd;X)}\leq C\|f\|_{X(\Ellp(\Rd))},
\end{align*}
where $C\geq 0$ is as in \eqref{q-concave}
\end{itemize}
\end{lemma}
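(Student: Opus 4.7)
The plan is a standard two-step density argument: first prove both inequalities by a direct computation on a dense subclass of simple tensor elements with disjoint supports, and then extend to the general case by approximation combined with a Fatou-type lower semicontinuity argument.

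For the first step, consider $g\in\Ellp(\Rd)\otimes X$ of the special form
\[
g=\sum_{j=1}^{N}\ind_{A_{j}}\otimes y_{j},
\]
where $A_{1},\ldots,A_{N}\subseteq\Rd$ are pairwise disjoint measurable sets of finite measure and $y_{1},\ldots,y_{N}\in X$. Set $z_{j}:=\mu(A_{j})^{1/p}y_{j}$. Using disjointness one has $\|g(t)\|_{X}=\sum_{j}\ind_{A_{j}}(t)\|y_{j}\|_{X}$, and integration gives
\[
\|g\|_{\Ellp(\Rd;X)}^{p}=\sum_{j=1}^{N}\mu(A_{j})\|y_{j}\|_{X}^{p}=\sum_{j=1}^{N}\|z_{j}\|_{X}^{p}.
\]
On the other hand, the identification $g(\omega)=\sum_{j}y_{j}(\omega)\ind_{A_{j}}$ as an $\Ellp(\Rd)$-valued function, together with disjointness of the $A_{j}$, yields
\[
\|g\|_{X(\Ellp(\Rd))}^{p}=\Big\|\Big(\sum_{j=1}^{N}\mu(A_{j})|y_{j}|^{p}\Big)^{1/p}\Big\|_{X}^{p}=\Big\|\Big(\sum_{j=1}^{N}|z_{j}|^{p}\Big)^{1/p}\Big\|_{X}^{p}.
\]
Both inequalities of the lemma then reduce, for such $g$, to a single substitution into the defining inequalities: \eqref{p-convex} applied to $z_{1},\ldots,z_{N}$ gives the $p$-convex estimate $\|g\|_{X(\Ellp(\Rd))}\leq C\|g\|_{\Ellp(\Rd;X)}$, and \eqref{q-concave} with $q=p$ applied to the same $z_{j}$ gives the $p$-concave estimate $\|g\|_{\Ellp(\Rd;X)}\leq C\|g\|_{X(\Ellp(\Rd))}$, in each case with the same constant $C$.

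For the second step, take a general $f=\sum_{k=1}^{n}f_{k}\otimes x_{k}\in\Ellp(\Rd)\otimes X$ and approximate each scalar $f_{k}$ in $\Ellp(\Rd)$ by step functions supported on a common refinement of partitions. This produces a sequence $g^{(\ell)}\in\Ellp(\Rd)\otimes X$ of the form treated in Step 1, with $g^{(\ell)}\to f$ in $\Ellp(\Rd;X)$; after passing to a subsequence one may further arrange pointwise a.e. convergence on $\Rd$. The inequalities obtained for each $g^{(\ell)}$ in Step 1 then pass to the limit: on the $\Ellp(\Rd;X)$ side one applies Fatou's lemma to the scalar integrands $\|g^{(\ell)}(t)\|_{X}^{p}$, and on the $X(\Ellp(\Rd))$ side one applies the Fatou property of the Banach function space $X$ to the a.e. convergent scalar functions $\omega\mapsto\bigl(\int_{\Rd}|g^{(\ell)}(\omega)(t)|^{p}\,\ud t\bigr)^{1/p}$, combined with convergence of $\|g^{(\ell)}\|_{\Ellp(\Rd;X)}$ to $\|f\|_{\Ellp(\Rd;X)}$.

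The main obstacle is entirely in the second step: one must coordinate the $\Ellp$-mode convergence of the vector-valued approximants $g^{(\ell)}$ with the pointwise almost-everywhere behaviour in the two different variables, so that a single subsequence makes both the $\Ellp(\Rd;X)$- and $X(\Ellp(\Rd))$-side Fatou arguments simultaneously valid. The quantitative content of the lemma is carried entirely by Step 1, where each direction collapses to one application of the definition \eqref{p-convex} of $p$-convexity or \eqref{q-concave} of $p$-concavity.
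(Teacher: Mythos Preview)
Your Step~1 is correct and matches the paper's approach exactly. The gap is in Step~2, specifically for the $p$-concave direction. From Step~1 you have $\|g^{(\ell)}\|_{\Ellp(\Rd;X)}\leq C\|g^{(\ell)}\|_{X(\Ellp(\Rd))}$ and want to let $\ell\to\infty$. The left side converges to $\|f\|_{\Ellp(\Rd;X)}$, which is fine. But on the right side the Fatou property of $X$ only yields
\[
\|f\|_{X(\Ellp(\Rd))}\leq \liminf_{\ell}\|g^{(\ell)}\|_{X(\Ellp(\Rd))},
\]
which is the wrong direction: to conclude $\|f\|_{\Ellp(\Rd;X)}\leq C\|f\|_{X(\Ellp(\Rd))}$ you would need an \emph{upper} bound on $\liminf_\ell\|g^{(\ell)}\|_{X(\Ellp(\Rd))}$ in terms of $\|f\|_{X(\Ellp(\Rd))}$, not a lower one. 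As written, your argument only establishes the $p$-convex inequality.

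The fix---and this is what the paper does---is simpler than the Fatou route: observe that $g^{(\ell)}\to f$ in $X(\Ellp(\Rd))$ as well. For an elementary tensor one has $\|h\otimes x\|_{X(\Ellp(\Rd))}=\|h\|_{\Ellp(\Rd)}\|x\|_{X}$ (since $(h\otimes x)(\omega)=x(\omega)h$), so by the triangle inequality
\[
\|f-g^{(\ell)}\|_{X(\Ellp(\Rd))}\leq\sum_{k=1}^{n}\|f_{k}-s_{k}^{(\ell)}\|_{\Ellp(\Rd)}\|x_{k}\|_{X}\to 0.
\]
With genuine convergence in \emph{both} norms, both inequalities pass to the limit trivially; no Fatou argument, no pointwise-a.e.\ subsequence extraction, and no reliance on $X$ having the Fatou property is needed.
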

\vanish{
\begin{proof}
First suppose that $f=\sum_{k=1}^{n}\ind_{A_{k}}\otimes x_{k}$ for $n\in\N$, $A_{1},\ldots, A_{n}\subseteq\Rd$ measurable and disjoint, and $x_{1},\ldots, x_{n}\in X$. Then
\begin{align*}
\|f\|_{X(\Ellp(\Rd))}&=\Big\|\Big(\int_{\Rd}\Big|\sum_{k=1}^{n}\ind_{A_{k}}(t) x_{k}\Big|^{p}\,\ud t\Big)^{1/p}\Big\|_{X}=\Big\|\Big(\sum_{k=1}^{n}\abs{A_{k}} \abs{x_{k}}^{p}\Big)^{1/p}\Big\|_{X}\\
&\leq C\Big(\sum_{k=1}^{n}\abs{A_{k}}\,\|x_{k}\|_{X}^{p}\Big)^{1/p}=C\|f\|_{\Ellp(\Rd;X)},
\end{align*}
by \eqref{p-convex}.

Now let $f=\sum_{k=1}^{n}f_{k}\otimes x_{k}$ for $n\in\N$, $f_{1},\ldots, f_{n}\in \Ellp(\Rd)$, $x_{1},\ldots, x_{n}\in X$. For $1\leq k\leq n$, let $(s_{k,m})_{m\in\N}\subseteq\Ellp(\Rd)$ be a sequence of simple functions such that $s_{k,m}\to f_{k}$ in $\Ellp(\Rd)$ as $m\to\infty$. For $m\in\N$ let $g_{m}:=\sum_{k=1}^{m}s_{k,m}\otimes x_{k}$. Then $g_{m}$ is an $X$-valued simple function. Moreover,
\begin{align*}
\|f-g_{m}\|_{\Ellp(\Rd;X)}&=\|\sum_{k=1}^{n}(f_{k}-s_{k,m})\otimes x_{k}\|_{\Ellp(\Rd;X)}\\
&\leq \sum_{k=1}^{n}\|f_{k}-s_{k,m}\|_{\Ellp(\Rd)}\| x_{k}\|_{X}\to0
\end{align*}
as $m\to\infty$. Similarly, $\|f-g_{m}\|_{X(\Ellp(\Rd))}\to0$ as $m\to\infty$. Hence, by what we have already shown,
\begin{align*}
\|f\|_{X(\Ellp(\Rd))}=\lim_{m\to\infty}\|g_{m}\|_{X(\Ellp(\Rd))}\leq C\|g_{m}\|_{\Ellp(\Rd;X)}=C\|f\|_{\Ellp(\Rd;X)}.
\end{align*}
\end{proof}
}

The proof of the following lemma is the same as in \cite[Lemma 4]{Montgomery-Smith96} for simple $X$-valued functions, and the general case follows by approximation.

\begin{lemma}\label{lem:positive operator through integral}
Let $X$ and $Y$ be Banach function spaces, $P\in\La(X,Y)$ a positive operator, $p\in[1,\infty)$ and $f\in\Ellp(\Rd)\otimes X$. Then
\begin{align*}
\Big(\int_{\Rd}\abs{P(f(t))}^{p}\,\ud t\Big)^{1/p}\leq P\left( \Big(\int_{\Rd}\abs{f(t)}^{p}\,\ud t\Big)^{1/p}\right).
\end{align*}
\end{lemma}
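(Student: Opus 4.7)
The plan is to follow the two-step strategy indicated in the statement: first establish the inequality for $X$-valued simple functions using a Krivine-type identity for positive operators, then pass to the general element of $\Ellp(\Rd)\otimes X$ by approximation.

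\medskip

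First I would reduce to a statement involving only finitely many vectors in $X$. Take $f=\sum_{k=1}^{n}\ind_{A_{k}}\otimes x_{k}$ with $A_{1},\ldots,A_{n}\subseteq\Rd$ measurable, disjoint, of finite measure, and $x_{k}\in X$. Since the sets $A_{k}$ are disjoint, at each point $t$ at most one indicator is nonzero, so
\begin{align*}
\Big(\int_{\Rd}\abs{f(t)}^{p}\,\ud t\Big)^{1/p}
&=\Big(\sum_{k=1}^{n}\abs{A_{k}}\,\abs{x_{k}}^{p}\Big)^{1/p},\\
\Big(\int_{\Rd}\abs{P(f(t))}^{p}\,\ud t\Big)^{1/p}
&=\Big(\sum_{k=1}^{n}\abs{A_{k}}\,\abs{P(x_{k})}^{p}\Big)^{1/p}.
\end{align*}
After absorbing the scalars $\abs{A_{k}}^{1/p}$ into the vectors $x_{k}$ (using that $P$ is linear and the modulus is positively homogeneous), the desired inequality becomes
\begin{align*}
\Big(\sum_{k=1}^{n}\abs{P(y_{k})}^{p}\Big)^{1/p}\leq P\Big(\Big(\sum_{k=1}^{n}\abs{y_{k}}^{p}\Big)^{1/p}\Big),
\end{align*}
for $y_{k}\in X$. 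This is the Krivine-type inequality for positive operators between Banach lattices, which is the main algebraic input of the argument.

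\medskip

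I would then prove this inequality. Expressions of the form $(\sum_{k}\abs{y_{k}}^{p})^{1/p}$ are well-defined in any Banach lattice via the homogeneous functional calculus (or, concretely in a Banach function space, pointwise). For each $n$-tuple $(\lambda_{1},\ldots,\lambda_{n})$ in the dual of $\ell^{p}$, the map $y\mapsto \sum_{k}\lambda_{k}y_{k}$ is linear in the $y_{k}$ and satisfies $\abs{\sum_{k}\lambda_{k}y_{k}}\leq \|\lambda\|_{\ell^{p'}}(\sum_{k}\abs{y_{k}}^{p})^{1/p}$. Applying the positive operator $P$ and taking the supremum over $\lambda$ with $\|\lambda\|_{\ell^{p'}}\leq 1$, one recovers the claimed inequality. (Equivalently, this is an instance of $P$ commuting with the continuous functional calculus up to order when restricted to positive homogeneous functions, e.g.\ the norm function $\abs{\cdot}_{\ell^{p}}$.) This establishes the inequality on the dense subspace of simple tensors $\ind_{A}\otimes x$.

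\medskip

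Finally, for a general $f=\sum_{k=1}^{n}f_{k}\otimes x_{k}\in\Ellp(\Rd)\otimes X$, choose sequences $(s_{k,m})_{m}$ of simple functions with $s_{k,m}\to f_{k}$ in $\Ellp(\Rd)$ as $m\to\infty$, and set $g_{m}:=\sum_{k=1}^{n}s_{k,m}\otimes x_{k}$, which is an $X$-valued simple function of the form handled above. From the estimate
\begin{align*}
\Big\|\Big(\int_{\Rd}\abs{f(t)-g_{m}(t)}^{p}\,\ud t\Big)^{1/p}\Big\|_{X}\leq \sum_{k=1}^{n}\|f_{k}-s_{k,m}\|_{\Ellp(\Rd)}\|x_{k}\|_{X}\longrightarrow 0,
\end{align*}
together with the corresponding estimate for $P\circ g_{m}$ using the boundedness of $P$, one deduces that both sides of the inequality for $g_{m}$ converge in $Y$ to the respective sides of the inequality for $f$. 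Since the positive cone of $Y$ is closed, the inequality passes to the limit, completing the proof. The principal obstacle in this scheme is the abstract Krivine-type inequality in the middle step; once that is in hand, the reduction and the approximation are routine.
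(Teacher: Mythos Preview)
Your proposal is correct and follows essentially the same route as the paper: reduce to simple $X$-valued functions, invoke the Krivine-type inequality $(\sum_k |P y_k|^p)^{1/p}\leq P((\sum_k |y_k|^p)^{1/p})$ for positive operators between Banach lattices, and then approximate. The paper simply cites \cite[Lemma 4]{Montgomery-Smith96} for the simple-function step, whereas you supply a sketch of that inequality via $\ell^{p}$--$\ell^{p'}$ duality and positivity of $P$; your approximation step is virtually identical to the paper's.
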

\vanish{
\begin{proof}
The proof for $f$ a simple $X$-valued function is as in \cite[Lemma 4]{Montgomery-Smith}. For a general $f\in\Ellp(\Rd)\otimes X$, let  $n\in\N$, $f_{1},\ldots, f_{n}\in \Ellp(\Rd)$ and $x_{1},\ldots, x_{n}\in X$ be such that $f=\sum_{k=1}^{n}f_{k}\otimes x_{k}$. For $1\leq k\leq n$, let $(s_{k,m})_{m\in\N}\subseteq\Ellp(\Rd)$ be a sequence of simple functions such that $s_{k,m}\to f_{k}$ in $\Ellp(\Rd)$ as $m\to\infty$. For $m\in\N$ let $g_{m}:=\sum_{k=1}^{m}s_{k,m}\otimes x_{k}$. Then $g_{m}$ is an $X$-valued simple function. Moreover, $g_{m}\to f$ in $X(\Ellp(\Rd))$. Hence
\begin{align*}
\Big(\int_{\Rd}\abs{g_{m}(t)}^{p}\,\ud t\Big)^{1/p}=\|g_{m}\|_{\Ellp(\Rd)}\to \|f\|_{\Ellp(\Rd)}=\Big(\int_{\Rd}\abs{f(t)}^{p}\,\ud t\Big)^{1/p}
\end{align*}
as a limit in $X$. Since $P$ is bounded,
\begin{align*}
P\left(\Big(\int_{\Rd}\abs{g_{m}(t)}^{p}\,\ud t\Big)^{1/p}\right)\to P\left(\Big(\int_{\Rd}\abs{f(t)}^{p}\,\ud t\Big)^{1/p}\right)
\end{align*}
in $Y$ as $m\to\infty$. Also, $Pg_{m}=\sum_{k=1}^{n}s_{k,m}\otimes Px_{k}\to\sum_{k=1}^{n}f_{k}\otimes Px_{k}=Pf$ in $Y(\Ellp(\Rd))$. We now conclude that
\begin{align*}
\Big(\int_{\Rd}\abs{Pf(t)}^{p}\,\ud t\Big)^{1/p}&=\lim_{m\to\infty}\Big(\int_{\Rd}\abs{Pg_{m}}^{p}\,\ud t\Big)^{1/p}\leq \lim_{m\to\infty}P\Big(\Big(\int_{\Rd}\abs{g_{m}}^{p}\,\ud t\Big)^{1/p}\Big)\\
&=P\Big(\Big(\int_{\Rd}\abs{f}^{p}\,\ud t\Big)^{1/p}\Big),
\end{align*}
which concludes the proof.
\end{proof}
}

\subsection{$\gamma$-boundedness}\label{gamma-bddness}
Let $X$ and $Y$ be Banach spaces. A collection $\mathcal{T}\subseteq \La(X,Y)$ is \emph{$\gamma$-bounded}
if there exists a constant $C\geq 0$ such that
\begin{align}\label{gamma-boundedness}
\Big(\mathbb{E}\Big\|\sum_{k=1}^{n}\gamma_{k}T_{k}x_{k}\Big\|_{Y}^{2}\Big)^{1/2}\leq
C\Big(\mathbb{E}\Big\|\sum_{k=1}^{n}\gamma_{k}x_{k}\Big\|_{X}^{2}\Big)^{1/2}
\end{align}
for all $n\in\N$, $T_{1},\ldots, T_{n}\in\mathcal{T}$, $x_{1},\ldots, x_{n}\in X$ and each Gaussian sequence $(\gamma_{k})_{k=1}^{n}$. The smallest such $C$ is the \emph{$\gamma$-bound} of $\mathcal{T}$ and is denoted by $\gamma(\mathcal{T})$. By the Kahane-Khintchine inequalities, we may replace the $\Elltwo$-norm in \eqref{gamma-boundedness} by an $\Ellp$-norm for each $p\in[1,\infty)$.

Every $\gamma$-bounded collection is uniformly bounded with supremum bound less than or equal to the $\gamma$-bound, and the converse holds if and only if $X$ has cotype $2$ and $Y$ has type $2$ (see \cite{Arendt-Bu02}).
By the Kahane contraction principle, for each $\gamma$-bounded collection $\mathcal{T}\subseteq\La(X,Y)$ and each $\lambda\in[0,\infty)$, the closure in the strong operator topology of the family $\{z T\mid z\in\C, \abs{z}\leq \lambda, T\in\mathcal{T}\}\subseteq\La(X,Y)$ is $\gamma$-bounded with
\begin{align}\label{Kahane contraction principle}
\gamma\Big(\overline{\{z T\mid z\in\C, \abs{z}\leq \lambda, T\in\mathcal{T}\}}^{\text{SOT}}\Big)\leq \lambda\gamma(\mathcal{T}).
\end{align}

By replacing the Gaussian random variables in \eqref{gamma-boundedness} by Rademacher variables, one obtains the definition of an \emph{$R$-bounded} collection $\mathcal{T}\subseteq\La(X,Y)$. Each $R$-bounded collection is $\gamma$-bounded. The notions of $\gamma$-boundedness and $R$-boundedness are equivalent if and only if $X$ has finite cotype (see \cite[Theorem 1.1]{KwVeWe14}), but the minimal constant $C$ in \eqref{gamma-boundedness} may depend on whether one considers Gaussian or Rademacher variables. In this article we work with $\gamma$-boundedness instead of $R$-boundedness because in our results we will allow spaces which do not have finite cotype.

\subsection{Bessel spaces}\label{function spaces}

For details on Bessel spaces and related spaces see e.g.~\cite{Amann97,Bergh-Lofstrom76, HyNeVeWe16,Triebel10}.

For $X$ a Banach space, $s\in\R$ and $p\in[1,\infty]$ the \emph{inhomogeneous Bessel potential space} $\Hr^{s}_{p}(\Rd;X)$ consists of all $f\in\Sw'(\Rd;X)$ such that $\F^{-1}((1+\abs{\cdot})^{s/2}\widehat{f}(\cdot)\,)\in\Ellp(\Rd;X)$. Then $\Hr^{s}_{p}(\Rd;X)$ is a Banach space endowed with the norm
\begin{align*}
\|f\|_{\Hr^{s}_{p}(\Rd;X)}:=\|\F^{-1}((1+\abs{\cdot}^2)^{s/2}\widehat{f}(\cdot))\|_{\Ellp(\Rd;X)}\quad(f\in\Hr^{s}_{p}(\Rd;X)),
\end{align*}
and $\Sw(\Rd;X)\subseteq\Hr^{s}_{p}(\Rd;X)$ lies dense if $p<\infty$.

\medskip

In this article we will also deal with homogeneous Bessel spaces. To define these spaces we follow the approach of
\cite[Chapter 5]{Triebel10} (see also \cite{Triebel15}). Let $X$ be a Banach space and define
\begin{align*}
\dot{\Sw}(\Rd;X):=\{f\in\Sw(\Rd;X)\mid \mathrm{D}^{\alpha}\!\widehat{f}(0)=0\text{ for all } \alpha\in\N_{0}^{d}\}.
\end{align*}
Endow $\dot{\Sw}(\Rd;X)$ with the subspace topology induced by $\Sw(\Rd;X)$ and set $\dot{\Sw}(\Rd):=\dot{\Sw}(\Rd;\C)$. Let $\dot{\Sw}'(\Rd;X)$ be the space of continuous linear mappings $\dot{\Sw}(\Rd)\to X$. Then each $f\in \Sw'(\Rd;X)$ yields an $f\restriction_{\dot{\Sw}(\Rd)}\in\dot{\Sw}'(\Rd;X)$ by restriction, and $f\restriction_{\dot{\Sw}(\Rd)}=g\restriction_{\dot{\Sw}(\Rd)}$ if and only if $\supp(\widehat{f}-\widehat{g})\subseteq\{0\}$. Conversely, one can check that each $f\in \dot{\Sw}'(\Rd;X)$ extends to an element of $\Sw'(\Rd;X)$ (see \cite{Rozendaal-Veraar16Besov} for the tedious details in the vector-valued setting). Hence $\dot{\Sw}'(\Rd;X)=\Sw'(\R^d;X)/\Pa(\Rd;X)$ for $\Pa(\Rd;X):=\{f\in\Sw'(\Rd;X)\mid \supp(\widehat{f})\subseteq\{0\}\}$. As in \cite[Proposition 2.4.1]{Grafakos08} one can show that $\Pa(\Rd;X)=\Pa(\Rd)\otimes X$, where $\Pa(\Rd)$ is the collection of polynomials on $\Rd$. If $F(\Rd;X)\subseteq\Sw'(\Rd;X)$ is a linear subspace such that $\Phi=0$ if $\supp(\widehat{\Phi}\,)\subseteq\{0\}$, then we will identify $F(\Rd;X)$ with its image in $\dot{\Sw}'(\Rd;X)$. In particular, this is the case if $F(\Rd;X)=\Ellp(\Rd;X)$ for some $p\in[1,\infty]$.

For $s\in\R$ and $p\in[1,\infty]$, the \emph{homogeneous Bessel potential space} $\dot{\Hr}^{s}_{p}(\Rd;X)$ is the space of all $f\in\dot{\Sw}'(\Rd;X)$ such that $\F^{-1}(\abs{\cdot}^{s}\widehat{f}(\cdot))\in\Ellp(\Rd;X)$, where
\begin{align*}
\lb \F^{-1}(\abs{\cdot}^{s}\widehat{f}(\cdot)),\ph\rb:=\lb f,\F^{-1}(\abs{\cdot}^{s}\widehat{\ph}(\cdot))\rb\qquad (\ph\in\dot{\Sw}(\Rd;X)).
\end{align*}
Then $\dot{\Hr}^{s}_{p}(\Rd;X)$ is a Banach space endowed with the norm
\begin{align*}
\|f\|_{\dot{\Hr}^{s}_{p}(\Rd;X)}:=\|\F^{-1}(\abs{\cdot}^{s}\widehat{f}(\cdot))\|_{\Ellp(\Rd;X)}\qquad(f\in\dot{\Hr}^{s}_{p}(\Rd;X)),
\end{align*}
and $\dot{\Sw}(\Rd)\otimes X\subseteq\dot{\Hr}^{s}_{p}(\Rd;X)$ lies dense if $p<\infty$.

\section{Fourier multipliers results}\label{Fourier multipliers}

In this section we introduce operator-valued Fourier multipliers acting on various vector-valued function spaces and discuss some of their properties. We start with some preliminaries and after that in Subsection \ref{subsec:transference} we prove a result that will allow us to transfer boundedness of multipliers on $\Rd$ to the torus $\mathbb{T}^{d}$. Then in Subsection \ref{Fourier type assumptions} we present some first simple results under Fourier type conditions. We return to our main multiplier results for spaces with type, cotype, $p$-convexity and $q$-concavity in Subsections \ref{sec:Bessel1} and \ref{subsec:convexconcave}.

\subsection{Definitions and basic properties}\label{definitions and basic properties}

Fix $d\in\N$, let $X$ and $Y$ be Banach spaces, and let $m:\Rd\to\La(X,Y)$ be $X$-strongly measurable. We say that $m$ is \emph{of moderate growth at infinity} if there exist a constant $\alpha\in(0,\infty)$ and a $g\in \Ell^1(\R^d)$ such that
\begin{align*}
(1+\abs{\xi})^{-\alpha} \|m(\xi)\|_{\La(X,Y)} \leq g(\xi) \qquad(\xi\in\Rd).
\end{align*}
For such an $m$, let $T_{m}:\Sw(\Rd;X)\to\Sw'(\Rd;Y)$ be given by
\begin{align*}
T_{m}(f):=\F^{-1}(m\cdot\widehat{f}\,)\qquad(f\in\Sw(\Rd;X)).
\end{align*}
We call $T_{m}$ the \emph{Fourier multiplier operator} associated with $m$ and we call $m$ the \emph{symbol} of $T_{m}$.

Let $p,q\in[1,\infty]$. We say that $m$ is a bounded \emph{$(\Ell^p(\Rd;X), \Ell^q(\Rd;Y))$-Fourier multiplier} if there exists a constant $C\in(0,\infty)$ such that $T_{m}(f)\in L^q(\Rd;Y)$ and
\begin{align*}
\|T_{m}(f)\|_{L^{q}(\Rd;Y)}\leq C\|f\|_{L^{p}(\Rd;X)}
\end{align*}
for all $f\in\Sw(\Rd;X)$. In the case $1\leq p<\infty$, $T_{m}$ extends uniquely to a bounded operator from $L^p(\Rd;X)$ to $L^q(\Rd;Y)$ which will be denoted by $\widetilde{T_{m}}$, and often just by $T_m$ when there is no danger of confusion. If $X=Y$ and $p=q$ then we simply say that $m$ is an $L^p(\Rd;X)$-Fourier multiplier.

We will also consider Fourier multipliers on homogeneous function spaces. Let $X$ and $Y$ be Banach spaces and let $m:\Rd\setminus\{0\}\to\La(X,Y)$ be $X$-strongly measurable. We say that $m:\Rd\setminus\{0\}\to\La(X,Y)$ is of \emph{moderate growth at zero and infinity} if there exist a constant $\alpha\in(0,\infty)$ and a $g\in \Ell^1(\R^d)$ such that
\begin{align*}
\abs{\xi}^{\alpha}(1+\abs{\xi})^{-2\alpha} \|m(\xi)\|_{\La(X,Y)} \leq g(\xi) \qquad(\xi\in\Rd).
\end{align*}
For such an $m$, let $\dot{T}_{m}:\dot{\Sw}(\Rd;X)\to\Sw'(\Rd;Y)$ be given by
\begin{align*}
\dot{T}_{m}(f):=\F^{-1}(m\cdot\widehat{f}\,)\qquad(f\in\dot{\Sw}(\Rd;X)),
\end{align*}
where $\dot{T}_{m}(f)\in\Sw'(\Rd;Y)$ is well-defined by definition of $\dot{\Sw}(\Rd;X)$. We use similar terminology as before to discuss the boundedness of $\dot{T}_{m}$. Often we will simply write $T_{m}=\dot{T}_{m}$, to simplify notation.

In later sections we will use the following lemma about approximation of multipliers, which can be proved as in \cite[Proposition 2.5.13]{Grafakos08}.

\begin{lemma}\label{approximation of multipliers}
Let $X$ and $Y$ be Banach spaces and $q\in[1,\infty]$. For each $n\in\N$ let $m_{n}:\Rd\to\La(X,Y)$ be $X$-strongly measurable, and let $m:\Rd\to\La(X,Y)$ be such that $m(\xi)x=\lim_{n\to\infty}m_{n}(\xi)x$ for all $x\in X$ and almost all $\xi\in\Rd$.
Suppose that there exist $\alpha>0$ and $g\in \Ell^1(\R^d)$ such that
\begin{align*}
(1+\abs{\xi})^{\alpha} \|m_{n}(\xi)\|_{\La(X,Y)} \leq g(\xi)
\end{align*}
for all $n\in\N$ and $\xi\in\Rd$. If $f\in\Sw(\Rd;X)$ is such that $T_{m_{n}}(f)\in\Ellq(\Rd;Y)$ for all $n\in\N$, and if $\displaystyle \liminf_{n\to\infty}\|T_{m_{n}}(f)\|_{\Ellq(\Rd;Y)}<\infty$, then $T_{m}(f)\in\Ellq(\Rd;Y)$ with
\begin{align*}
\|T_{m}(f)\|_{\Ellq(\Rd;Y)}\leq \liminf_{n\to\infty}\|T_{m_{n}}(f)\|_{\Ellq(\Rd;Y)}.
\end{align*}
\end{lemma}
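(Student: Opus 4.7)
The plan is to combine dominated convergence applied to $m_{n}\widehat{f}$ with Fatou's lemma. First I would observe that since $f\in\Sw(\Rd;X)$ we have $\widehat{f}\in\Sw(\Rd;X)$, so that in particular $\sup_{\xi\in\Rd}\|\widehat{f}(\xi)\|_{X}<\infty$. Combined with the hypothesis $\|m_{n}(\xi)\|_{\La(X,Y)}\leq (1+\abs{\xi})^{-\alpha}g(\xi)$ this yields the pointwise domination
\begin{align*}
\|m_{n}(\xi)\widehat{f}(\xi)\|_{Y}\leq g(\xi)\sup_{\eta\in\Rd}\|\widehat{f}(\eta)\|_{X}\qquad(\xi\in\Rd,\, n\in\N),
\end{align*}
with the right-hand side in $\Ellone(\Rd)$. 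The pointwise convergence hypothesis $m_{n}(\xi)x\to m(\xi)x$ (for all $x\in X$ and a.e.~$\xi$) gives $m_{n}(\xi)\widehat{f}(\xi)\to m(\xi)\widehat{f}(\xi)$ in $Y$ for a.e.~$\xi\in\Rd$. Hence, by dominated convergence in $\Ellone(\Rd;Y)$,
\begin{align*}
m_{n}\widehat{f}\to m\widehat{f}\quad\text{in}\quad \Ellone(\Rd;Y)\quad\text{as}\quad n\to\infty.
\end{align*}

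Next I would transfer this to the spatial side. Because the inverse Fourier transform maps $\Ellone(\Rd;Y)$ continuously into the bounded continuous $Y$-valued functions on $\Rd$ (with norm at most $1$), the preceding convergence gives $T_{m_{n}}(f)\to T_{m}(f)$ uniformly on $\Rd$. In particular, $T_{m_{n}}(f)(x)\to T_{m}(f)(x)$ in $Y$ for every $x\in\Rd$.

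Finally I would pass to the limit on the norms. Choose a subsequence $(n_{k})_{k}$ with $\|T_{m_{n_{k}}}(f)\|_{\Ellq(\Rd;Y)}\to \liminf_{n\to\infty}\|T_{m_{n}}(f)\|_{\Ellq(\Rd;Y)}=:M<\infty$. For $q<\infty$, Fatou's lemma applied to the scalar integrands $x\mapsto \|T_{m_{n_{k}}}(f)(x)\|_{Y}^{q}$ yields $\|T_{m}(f)\|_{\Ellq(\Rd;Y)}\leq M$, and the case $q=\infty$ follows directly from the everywhere-pointwise convergence, since $\|T_{m}(f)(x)\|_{Y}=\lim_{k\to\infty}\|T_{m_{n_{k}}}(f)(x)\|_{Y}\leq M$ for every $x\in\Rd$.

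The argument is essentially routine; the only substantive point is producing a single integrable dominant for $m_{n}\widehat{f}$ uniform in $n$, and this is immediate from the given bound together with the Schwartz decay of $\widehat{f}$. I expect no genuine obstacle beyond keeping track of the $L^{\infty}$ endpoint separately.
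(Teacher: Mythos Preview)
Your proof is correct and follows essentially the same route as the argument the paper points to (Proposition~2.5.13 in Grafakos): obtain pointwise convergence $T_{m_n}(f)\to T_m(f)$ via dominated convergence on the Fourier side, and then apply Fatou's lemma to the $\Ellq$-norms.
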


The same result holds for $f\in \dot{\Sw}(\Rd;X)$ if instead we assume that there exist an $\alpha>0$ and $g\in \Ellone(\Rd)$ such that, for all $n\in\N$ and $\xi\in\Rd$,
\begin{align*}
|\xi|^{-\alpha} (1+\abs{\xi})^{2\alpha} \|m_{n}(\xi)\|_{\La(X,Y)} \leq g(\xi).
\end{align*}

The case of positive scalar-valued kernels plays a special role. An immediate consequence of \cite[Proposition 4.5.10]{Grafakos08} is:

\begin{proposition}[Positive kernels]\label{prop:positivekern}
Let $m:\Rd\setminus\{0\}\to \C$ have moderate growth at zero
and infinity. Suppose that $\dot{T}_m:\Ellp(\Rd)\to \Ellq(\Rd)$ is bounded for some $p,q\in[1,\infty]$ and that $\F^{-1} m\in \dot{\Sw}'(\Rd)$ is positive. Then, for any Banach space $X$, the operator $T_m\otimes I_X:\Ell^p(\R^d;X)\to \Ell^q(\R^d;X)$ is bounded of norm
\begin{align*}
\|T_m\otimes I_{X}\|_{\La(\Ellp(\Rd;X),\Ellq(\Rd;Y))}\leq \|T_m\|_{\La(\Ellp(\Rd),\Ellq(\Rd))}.
\end{align*}
\end{proposition}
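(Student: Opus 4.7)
The cited scalar result tells us that the positivity of $\F^{-1} m$ forces the kernel to be represented by a positive Radon measure $\mu$ on $\Rd$ (well-defined modulo the polynomial ambiguity intrinsic to the homogeneous setting, which is harmless for $\Ellp$-norm estimates). Concretely, $T_m g = \mu \ast g$ holds for $g$ in the relevant Schwartz-type test class, and the scalar hypothesis reads
\[
\|\mu \ast g\|_{\Ellq(\Rd)} \leq \|T_m\|_{\La(\Ellp(\Rd),\Ellq(\Rd))} \|g\|_{\Ellp(\Rd)}.
\]
I would take this representation as the starting point; the remainder of the argument is the observation that convolution with a positive measure behaves well with respect to Bochner $X$-norms.

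\textbf{Main reduction.} For any $f \in \dot{\Sw}(\Rd;X)$, the operator $(T_m \otimes I_X)(f) = \F^{-1}(m \cdot \wh{f})$ coincides with the $X$-valued Bochner convolution $(\mu \ast f)(t) = \int_{\Rd} f(t-s)\, d\mu(s)$. Because $\mu \geq 0$, the triangle inequality for the Bochner integral yields the pointwise domination
\[
\|(T_m\otimes I_X)(f)(t)\|_X \;\leq\; \int_{\Rd} \|f(t-s)\|_X\, d\mu(s) \;=\; T_m(\|f(\cdot)\|_X)(t)
\]
for a.e.\ $t \in \Rd$. Taking $\Ellq(\Rd)$-norms in $t$ and invoking the scalar bound,
\[
\|(T_m \otimes I_X)(f)\|_{\Ellq(\Rd;X)} \;\leq\; \|T_m(\|f(\cdot)\|_X)\|_{\Ellq(\Rd)} \;\leq\; \|T_m\|_{\La(\Ellp,\Ellq)} \|f\|_{\Ellp(\Rd;X)},
\]
which is precisely the claimed estimate on the test class used to define the Fourier multiplier norm.

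\textbf{Main obstacle.} The only delicate point is justifying the representation of $T_m$ as convolution with a positive Radon measure in the homogeneous distributional framework, since $\F^{-1} m$ lives in $\dot{\Sw}'(\Rd)$, is only an equivalence class modulo polynomials, and need not obviously be locally finite. A robust workaround, should one prefer not to appeal to the scalar proposition as a black box, is to approximate $m$ by multipliers $m_n$ whose inverse Fourier transforms are nonnegative and sufficiently regular: one convolves $\F^{-1} m$ with a symmetric, nonnegative approximate identity and with a cutoff that controls the behaviour at $0$ and at infinity. Positivity is preserved under convolution with nonnegative functions, so the pointwise Bochner estimate above applies to each $T_{m_n}$. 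Lemma \ref{approximation of multipliers} (in the homogeneous variant stated right after it) then lets one pass to the limit $n \to \infty$ while preserving the constant $\|T_m\|_{\La(\Ellp,\Ellq)}$, giving the desired bound on the original operator $T_m \otimes I_X$.
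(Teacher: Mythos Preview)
Your argument is correct and is precisely the standard one: the paper does not give an independent proof but simply cites \cite[Proposition 4.5.10]{Grafakos08}, and what you have written is exactly the content of that reference adapted to the homogeneous setting. The pointwise Bochner domination $\|(\mu\ast f)(t)\|_X\leq (\mu\ast\|f(\cdot)\|_X)(t)$ followed by the scalar $\Ellp$--$\Ellq$ bound is the whole proof, and your discussion of the approximation via Lemma~\ref{approximation of multipliers} correctly handles the homogeneous-distribution technicalities.
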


The Hardy--Littlewood--Sobolev inequality on fractional integration is a typical example where Proposition \ref{prop:positivekern} can be applied.

\begin{example}\label{ex:HLS}
Let $X$ be a Banach space and $1<p\leq q<\infty$. Let $m(\xi) := |\xi|^{-s}$ for $\xi\in\Rd$. Then $T_m:\Ell^p(\R^d;X)\to \Ell^q(\R^d;X)$ is bounded if and only if $\frac{1}{p} - \frac1q = \frac{s}{d}$. In this case $\F^{-1} m(\cdot) = C_s |\cdot|^{-d+s}$ is positive and therefore the result follows from the scalar case
(see \cite[Theorem 6.1.3]{Grafakos09}) and Proposition \ref{prop:positivekern}. The same holds for the multiplier $m(\cdot) := (1+|\cdot|^{2})^{-s/2}$ under the less restrictive condition $\frac{1}{p} - \frac1q \leq  \frac{s}{d}$.
\end{example}

\subsection{Transference from $\R^d$ to $\T^d$}\label{subsec:transference}

We will mainly consider Fourier multipliers on $\R^d$. However, we want to present at least one transference result to obtain Fourier multiplier results for the torus $\T^d := [0,1]^d$. The transference technique differs slightly from the standard setting of de Leeuw's theorem where $p=q$ (see \cite[Theorem 4.5]{Leeuw81} and \cite[Chapter 5]{HyNeVeWe16}), due to the fact that $\|T_{m_a}\|_{\calL(\Ell^p(\R^d),\Ell^q(\R^d))} = a^{-d/r} \|T_m\|_{\La(\Ellp(\R^d), \Ellq(\R^d))}$, where $\frac1r = \frac1p - \frac1q$ and $m_a(\xi) := m(a\xi)$ for $a>0$.

Let $e_k:\T^d\to \C$ be given by $e_k(t) := e^{2\pi i k \cdot t}$ for $k\in\Z$ and $t\in\T^{d}$.

\begin{proposition}[Transference]\label{prop:transference}
Let $p,q,r\in(1,\infty)$ be such that $\frac{1}{r} = \frac1p - \frac1q$. Let $m:\R^d\to \calL(X,Y)$
be such that $m(\cdot)x\in \Ell^1_{\rm loc}(\R^d;Y)$ for all $x\in X$. Fix $a>0$ and let $m_k x := a^{-d}\int_{[0,a]^d} m(t+ka)x \,\ud t$ for $k\in \Z^{d}$.
If $T_m:\Ell^p(\R^d;X)\to \Ell^q(\R^d;Y)$ is bounded, then for all $n\in \N$ and $(x_k)_{|k|\leq n}$ in $X$,
\[
a^{d/r}  \Big\|\sum_{|k|\leq n} e_k m_k x_k\Big\|_{\Ellq(\T^d;Y)}  \leq C_{d,p,q'} \|T_m\| \, \Big\|\sum_{|k|\leq n} e_k x_k \Big\|_{\Ellp(\T^d;X)}
\]
for some $C_{d,p,q'}\geq 0$. In particular, the Fourier multiplier operator with symbol $(m_{k})_{k\in\Z^{d}}$ is bounded from $\Ellp(\T^{d};X)$ to $\Ellq(\T^{d};Y)$.
\end{proposition}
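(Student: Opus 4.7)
First, I would perform a scaling reduction. Define $\tilde m(\xi):=m(a\xi)$. A change of variables gives $T_{\tilde m}f(t) = (T_m f_a)(t/a)$ with $f_a(s):=f(as)$, whence
\[
\|T_{\tilde m}\|_{\La(\Ellp(\Rd;X),\Ellq(\Rd;Y))} = a^{-d/r}\|T_m\|_{\La(\Ellp(\Rd;X),\Ellq(\Rd;Y))}.
\]
Substituting $u=a\xi$ in $\int_{[0,1]^d}\tilde m(\xi+k)\,\ud\xi$ produces exactly $m_k$. So it suffices to prove the statement with $a=1$ (and $m$ replaced by $\tilde m$); multiplying the resulting inequality by $a^{d/r}$ yields the proposition as stated.

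For the $a=1$ case I would argue by duality. Fix $(x_k)_{|k|\leq n}\subseteq X$ and $(y_k^*)_{|k|\leq n}\subseteq Y^*$, and set $g:=\sum_k e_k x_k$ and $h:=\sum_k e_{-k}y_k^*$, so that the $Y$--$Y^*$ dual pairing on $\T^d$ sends $\sum_k e_k m_k x_k$ and $h$ to $\sum_k\langle m_k x_k,y_k^*\rangle$. The key choice is the pair of windows $\phi:=\F^{-1}(\ind_{[0,1]^d})$ and $\psi:=\F^{-1}(\ind_{[-1,0]^d})$ on $\Rd$; one computes $|\phi(t)|=|\psi(t)|=\prod_{j=1}^d\frac{|\sin\pi t_j|}{\pi|t_j|}$, so $\phi\in\Ellp(\Rd)$ and $\psi\in\Ellqprime(\Rd)$ (because $p,q'\in(1,\infty)$). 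Lift $g,h$ to $\Rd$ by $F(t):=\phi(t)g(t)$ and $H(t):=\psi(t)h(t)$, with $g,h$ viewed as $1$-periodic. Using $|\sin\pi(t_j+n_j)|=|\sin\pi t_j|$ and summability of $\sum_{n_j\in\Z}|t_j+n_j|^{-p}$ for $p>1$, one checks $\sum_{n\in\Z^d}|\phi(t+n)|^p\leq C_{d,p}$ for a.e.~$t\in[0,1]^d$, and periodicity gives
\[
\|F\|_{\Ellp(\Rd;X)}\leq C_{d,p}\|g\|_{\Ellp(\T^d;X)},\qquad \|H\|_{\Ellqprime(\Rd;Y^*)}\leq C_{d,q'}\|h\|_{\Ellqprime(\T^d;Y^*)}.
\]

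Next I would invoke a Parseval-type identity. Since $g,h$ are trigonometric polynomials and $\hat\phi,\hat\psi$ have supports in unit cubes,
\[
\hat F(\xi)=\sum_{|k|\leq n} x_k\hat\phi(\xi-k),\qquad \hat H(-\xi)=\sum_{|k|\leq n} y_k^*\hat\psi(k-\xi)
\]
are bounded with compact support, and by construction $\hat F(\xi)=x_k$ and $\hat H(-\xi)=y_k^*$ for $\xi\in[k,k+1]^d$, with disjoint supports across different indices. Parseval for the $Y$--$Y^*$ dual pairing gives
\begin{align*}
\int_{\Rd}\langle T_m F(t),H(t)\rangle\,\ud t &= \int_{\Rd}\langle m(\xi)\hat F(\xi),\hat H(-\xi)\rangle\,\ud\xi\\
&= \sum_{|k|\leq n}\int_{[0,1]^d}\langle m(\eta+k)x_k,y_k^*\rangle\,\ud\eta = \sum_{|k|\leq n}\langle m_k x_k,y_k^*\rangle.
\end{align*}
Estimating the first integral by H\"older and the $\Ellp$--$\Ellq$-boundedness of $T_m$, then combining with the norm bounds above,
\[
\Bigl|\sum_k\langle m_k x_k,y_k^*\rangle\Bigr|\leq C_{d,p,q'}\|T_m\|\,\|g\|_{\Ellp(\T^d;X)}\|h\|_{\Ellqprime(\T^d;Y^*)}.
\]
The $a=1$ claim then follows by taking the supremum over $h$ with $\|h\|_{\Ellqprime(\T^d;Y^*)}\leq 1$.

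The main technical obstacle I foresee is the rigorous justification of the Parseval identity in the third paragraph, because $F\in\Ellp(\Rd;X)$ is not Schwartz and $T_m F$ there must be interpreted as the canonical $\Ellp$--$\Ellq$ extension $\widetilde{T_m}F$. Since $\hat F\in\Ellone\cap\Ellinfty$ is compactly supported and $m(\cdot)x\in\Ellone_{\mathrm{loc}}(\Rd;Y)$ for each $x\in X$, the product $m\hat F$ lies in $\Ellone(\Rd;Y)$ and has a continuous bounded inverse Fourier transform. Either mollifying $F$ by $F*\rho_\varepsilon\in\Sw(\Rd;X)$, or approximating $m$ by truncated bounded multipliers and invoking Lemma~\ref{approximation of multipliers}, identifies $\widetilde{T_m}F$ with $\F^{-1}(m\hat F)$ and legitimises the pairing computation above.
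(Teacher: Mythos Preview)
Your proof is correct and follows essentially the same route as the paper's: a duality reduction, lifting the trigonometric polynomials to $\Rd$ via the window $\F^{-1}\one_{[0,1]^d}$ so that the Fourier transforms become indicator functions on disjoint unit cubes, the periodization bound $\sum_{n\in\Z^d}|\phi(t+n)|^p\leq C_{d,p}$, and the Parseval identity. The only cosmetic differences are that the paper handles general $a$ directly by inserting the scaling into the test functions $f(t)=a^{d/p}h(at)P(at)$, $g(t)=a^{d/q}h(at)Q(-at)$ rather than reducing to $a=1$ first, and uses a single window $h$ together with the reflection $Q(-at)$ instead of your pair $\phi,\psi=\bar\phi$; the resulting computations are the same.
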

This result seems to be new even in the scalar case $X=Y=\C$.
\begin{proof}
Let $P = \sum_{|k|\leq n} e_k x_k$. Since $\Ell^{q'}(\T^d;Y^*)$ is norming for $\Ell^{q}(\T^d;Y)$ and  since
the $Y^{*}$-valued trigonometric polynomials are dense in $\Ell^{q'}(\T^d;Y^*)$, it suffices to show that
\begin{equation}\label{eq:toprovebddTorus}
a^{d/r}  \Big|\Big<\sum_{|k|\leq n}  e_k m_k x_k, Q\Big>\Big|  \leq C_{d,p,q'} \|T_m\| \, \|P\|_{\Ellp(\T^{d};X)} \|Q\|_{\Ell^{q'}(\T^{d};Y^*)}
\end{equation}
for $Q:\T^d\to Y^*$ an arbitrary $Y^{*}$-valued trigonometric polynomial. Moreover, adding zero vectors $x_k$ or $y_k^*$ and enlarging $n$ if necessary, we can assume that $Q =  \sum_{|k|\leq n} e_{-k} y_k^*$.

To prove \eqref{eq:toprovebddTorus} observe that for $E:=\Ell^{\min(p,q')}(\Rd)$ and $f\in E\otimes X$, $g\in E\otimes Y^{*}$, the boundedness of $T_m$ is equivalent to
\begin{equation}\label{eq:bddTmequiv}
\Big|\int_{\R^d} \lb m(\xi) \wh f(\xi), \wh g(\xi)\rb \, \ud \xi\Big|\leq \|T_m\| \, \|f\|_{\Ell^p(\R^d;X)} \|g\|_{\Ell^{q'}(\R^d;Y^*)},
\end{equation}
where we have used that $\lb m \wh f, \wh g\rb = \lb T_m f, g\rb$. Let $h(t) := \F^{-1}(\one_{[0,1]^d})(t) = e^{i\pi (t_1+\ldots +t_d)} \prod_{j=1}^d \frac{\sin(\pi t_j)}{\pi t_j}$ for $t=(t_{1},\ldots, t_{n})\in\Rd$, and
\[
f(t): = a^{d/p} h(at) P(at),\quad g(t) := a^{d/q} h(at) Q(-at).
\]
Then $f\in E\otimes X$, $g\in E\otimes Y^{*}$, and
\[
\wh f(\xi) = a^{-d/p'} \sum_{|k|\leq n} \one_{[0,a]^d + ak}(\xi)  x_k,\quad\wh g(\xi) = a^{-d/q} \sum_{|k|\leq n} \one_{[0,a]^d +ak}(\xi)  y_k^*
\]
for $\xi\in\Rd$. By substitution we find
\begin{align*}
\|f\|_{\Ell^{p}(\R^d;X)} &= \Big(\int_{\R^d} |h(t)|^p \|P(t)\|^p_X \,\ud t\Big)^{1/p}
 = \Big(\sum_{j\in \Z^d}\int_{[0,1]^d+j} |h(t)|^p \|P(t)\|^p_X \,\ud t\Big)^{1/p}
\\ & = \Big(\int_{[0,1]^d} |H(t)|^p \|P(t)\|^p_X \,\ud t\Big)^{1/p}
\leq C_{d,p} \|P\|_{\Ell^p(\T^d;X)},
\end{align*}
where we used the standard fact that $H(t) = \sum_{j\in \Z^d}|h(t+j)|^p\leq C_{d,p}$ for $t\in\Rd$, $p\in (1, \infty)$ and some $C_{d,p}\geq 0$. Similarly, one checks that
\[
\|g\|_{\Ell^{q'}(\R^d;Y^*)} \leq C_{d,q'} \|Q\|_{\Ell^{q'}(\T^d;Y^*)}.
\]
Since the left-hand side of \eqref{eq:bddTmequiv} equals the left-hand side of \eqref{eq:toprovebddTorus}, the first statement follows from these estimates.

The second statement follows from the first since the $X$-valued trigonometric polynomials are dense in $\Ell^{p}(\T^{d};X)$.
\end{proof}

\begin{remark}\label{rem:Torusp=q}
Any Fourier multiplier from $\Ellp(\T^d;X)$ to $\Ellq(\T^d;Y)$ with $1\leq p\leq q\leq \infty$, trivially yields a multiplier from $\Ell^u(\T^d;X)$ into $\Ell^v(\T^d;Y)$ for all $p\leq u\leq v\leq q$. Indeed, this follows from the embedding $\Ell^a(\T^d;X)\hookrightarrow \Ell^b(\T^d;X)$ for $a\geq b$. In particular, any boundedness result from $\Ell^p(\T^d;X)$ to $\Ell^q(\T^d;Y)$ implies boundedness from $\Ell^u(\T^d;X)$ into $\Ell^u(\T^d;Y)$.
\end{remark}

As an application of Proposition \ref{prop:transference} and Theorem \ref{thm:sharpintegrabilityintro} we obtain the following:

\begin{corollary}\label{co:sharpintegrabilityintroTorus}
Let $X$ be a Banach space with type $p_0\in(1,2]$ and $Y$ a Banach space with cotype $q_0\in[2,\infty)$, and let $p\in (1, p_0)$, $q\in (q_0, \infty)$. Let $r\in(1,\infty]$ be such that $\frac{1}{r}=\frac{1}{p}-\frac{1}{q}$. If $(m_k)_{k\in \Z^d}$ is a family of operators in $\La(X,Y)$ and
\begin{align*}
\{(\abs{k}^{d/r}+1)m_k\mid k\in \Z^{d}\}\subseteq\La(X,Y)
\end{align*}
is $\gamma$-bounded, then the Fourier multiplier operator with symbol $(m_{k})_{k\in\Z}$ is bounded from $\Ellp(\T^{d};X)$ to $\Ellq(\T^{d};Y)$.
Moreover, if $p_0 = 2$ (or $q_0 = 2)$, then one can also take $p = 2$ (or $q=2$).
\end{corollary}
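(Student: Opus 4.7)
The plan is to reduce the corollary to Theorem \ref{thm:sharpintegrabilityintro} via Proposition \ref{prop:transference}. Concretely, I would build a multiplier symbol $m:\R^d\to \La(X,Y)$ on the whole of $\R^d$ that is constant on the unit cubes $Q_k := [0,1)^d + k$, $k\in\Z^d$, with value $m(\xi):=m_k$ for $\xi\in Q_k$. With the choice $a=1$ in Proposition \ref{prop:transference}, the averages
\[
a^{-d}\int_{[0,a]^d} m(t+ka)\,\ud t = \int_{[0,1)^d} m(t+k)\,\ud t = m_k
\]
recover precisely the sequence given in the corollary. Since each $m_k\in\La(X,Y)$ is a bounded operator and $m$ is constant on each $Q_k$, the resulting $m$ is clearly $X$-strongly measurable; moderate growth will follow a posteriori from the $\gamma$-boundedness estimate below, or can be ensured by first truncating and passing to the limit via Lemma \ref{approximation of multipliers}.

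The key point is to verify the $\gamma$-boundedness hypothesis \eqref{eq:gammabddintro} of Theorem \ref{thm:sharpintegrabilityintro}. For $\xi\in Q_k$ one has $\abs{\xi}\leq \abs{k}+\sqrt{d}$, hence there exists a constant $C=C_{d,r}$ with
\[
\abs{\xi}^{d/r}\leq C\bigl(\abs{k}^{d/r}+1\bigr)\qquad(\xi\in Q_k,\ k\in\Z^d).
\]
Therefore
\[
\{\abs{\xi}^{d/r} m(\xi)\mid \xi\in\R^d\setminus\{0\}\} \subseteq \{z\,(\abs{k}^{d/r}+1)m_k\mid z\in\C,\ \abs{z}\leq C,\ k\in\Z^d\},
\]
and the Kahane contraction principle \eqref{Kahane contraction principle} combined with the $\gamma$-boundedness assumption on $\{(\abs{k}^{d/r}+1)m_k\}_{k\in\Z^d}$ gives $\gamma$-boundedness of the left-hand side.

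With \eqref{eq:gammabddintro} established, Theorem \ref{thm:sharpintegrabilityintro} yields that $T_m:\Ellp(\R^d;X)\to\Ellq(\R^d;Y)$ is bounded, in the endpoint case $p=2$ or $q=2$ when $p_0=2$ or $q_0=2$ respectively. Since $p<p_0\leq q_0<q$ forces $p<q$, we have $r\in(1,\infty)$, so Proposition \ref{prop:transference} applies with $a=1$ and produces exactly the desired bound for the Fourier multiplier on $\T^d$ with symbol $(m_k)_{k\in\Z^d}$. No step here should be a real obstacle; the only mildly subtle point is the bookkeeping that the cube-wise definition of $m$ both realises the sequence $(m_k)$ as torus averages and preserves $\gamma$-boundedness after multiplication by $\abs{\xi}^{d/r}$, which the contraction principle handles cleanly.
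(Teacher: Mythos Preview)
Your proposal is correct and is essentially the same argument as the paper's own proof: define $m$ to be constant equal to $m_k$ on each unit cube $[0,1)^d+k$, use $|\xi|\le |k|+\sqrt{d}$ together with the Kahane contraction principle to verify the $\gamma$-boundedness hypothesis of Theorem~\ref{thm:sharpintegrabilityintro}, and then apply Proposition~\ref{prop:transference} with $a=1$ to pass to the torus.
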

\begin{proof}
Let $m(\xi) := \sum_{k\in \Z^d} \one_{[0,1]^d}(\xi-k) m_k$ for $\xi\in \R^d$. Then for $k\in\Z^{d}$ and $\xi\in [0,1]^d+k$, we have $m(\xi) = m_k$ and $|\xi|^{d/r} \leq (|k|+\sqrt{d})^{d/r}\leq C_{d,r}(|k|^{d/r}+1)$. Therefore, Kahane's contraction principle yields
\[
\gamma(\{\abs{\xi}^{\frac{d}{r}}m(\xi)\mid \xi\in\Rd\}) \leq C_{d,r}\gamma(\{(\abs{k}+1)^{\frac{d}{r}}m_k\mid k\in \Z^d\}),
\]
which is assumed to be finite.  By Theorem \ref{thm:sharpintegrabilityintro}, $T_m:\Ell^p(\R^d;X)\to \Ell^q(\R^d;Y)$ is bounded. Since
$m_k = \int_{[0,1]^d} m(t+k) \,\ud t$ for $k\in\Z^{d}$, Proposition \ref{prop:transference} yields the required result.
\end{proof}

As an application we show how Corollary \ref{co:sharpintegrabilityintroTorus} can be used in the study of Schur multipliers. For $p\in[1, \infty)$ let $\schatten^p$ denote the Schatten $p$-class over a Hilbert space $H$. For a detailed discussion on these spaces we refer to \cite{DiJaTo95} and \cite{HyNeVeWe16}. Let $(e_{j})_{j\in \Z}$ be a countable spectral resolution of $H$. That is,
\begin{enumerate}[(1)]
\item for all $j\in \Z$, $e_{j}$ is an orthogonal projection in $H$;
\item for all $j,k\in \Z$, $e_{j} e_{k} = 0$ if $j\neq k$;
\item for all $h\in H$, $\sum_{j\in \Z} e_j h = h$.
\end{enumerate}

Using the technique of \cite[Theorem 4]{Potapov-Sukochev11} we deduce the following result from Corollary \ref{co:sharpintegrabilityintroTorus}. A similar result holds for more general noncommutative $\Ell^p$-spaces with a similar proof.

\begin{corollary}\label{cor:Schattenappl}
Let $a\in (1, \infty)\setminus\{2\}$ and let $r\in [1,\infty)$ be such that $\frac1r < |\frac{1}{a} - \frac12|$. Let $m:\Z\to\C$ be such that $C_m:=\sup_{j\in \Z} (1+|j|^{1/r}) |m_j|<\infty$, let $f:\Z\to \Z$ and write $m_{j,k}^f := m_{f(j)-f(k)}$. Then the Schur multiplier operator $M_{m,f}^e$ on $\Cs^a$, given by
\begin{align}\label{eq:DefMve}
M_{m,f}^e v := \sum_{j,k\in \Z} m_{j,k}^f e_j v e_k = \lim_{n\to \infty} \sum_{|j|,|k|\leq n} m_{j,k}^f e_j v e_k
\end{align}
for $v\in \Cs^{a}$, is well-defined and satisfies
\begin{equation}\label{eq:Mmeafschatting}
\|M_{m,f}^e\|_{\calL(\Cs^a)}\leq C_{a,r} C_m
\end{equation}
for some $C_{a,r}\geq 0$ independent of $m$.
\end{corollary}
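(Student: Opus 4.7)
The strategy is a Potapov--Sukochev style transference reducing the Schur multiplier bound on $\schatten^{a}$ to the boundedness of a $\schatten^{a}$-valued Fourier multiplier on $\T$, which will be provided by Corollary \ref{co:sharpintegrabilityintroTorus}. Recall that $\schatten^{a}$ has type $\min(a,2)$ and cotype $\max(a,2)$. The assumption $\frac{1}{r}<|\frac{1}{a}-\frac{1}{2}|$ should allow a choice of exponents $p,q\in(1,\infty)$ with $\frac{1}{p}-\frac{1}{q}=\frac{1}{r}$ and $p\leq a\leq q$ so that $(p,q)$ is admissible in Corollary \ref{co:sharpintegrabilityintroTorus} with $X=Y=\schatten^{a}$. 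By the Kahane contraction principle the set $\{(|n|^{1/r}+1)m_{n}\mathrm{I}_{\schatten^{a}}\mid n\in\Z\}$ is $\gamma$-bounded with $\gamma$-bound at most $C_{m}$, so Corollary \ref{co:sharpintegrabilityintroTorus} (combined with Remark \ref{rem:Torusp=q} to pass from $\Ell^{p}\to\Ell^{q}$ down to the diagonal at $u=a$) yields boundedness of the scalar Fourier multiplier $T_{m}:\Ell^{a}(\T;\schatten^{a})\to\Ell^{a}(\T;\schatten^{a})$ with norm at most $C_{a,r}C_{m}$.

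Next I would implement the transference through the strongly continuous unitaries $U(t):=\sum_{j\in\Z}\ue^{2\pi\ui f(j)t}e_{j}$ on $H$ (convergent in the strong operator topology) and the map $\Psi:\schatten^{a}\to\Ell^{\infty}(\T;\schatten^{a})$ defined by $\Psi(v)(t):=U(t)vU(t)^{*}$. Since each $U(t)$ is unitary, $\|\Psi(v)(t)\|_{\schatten^{a}}=\|v\|_{\schatten^{a}}$ for every $t\in\T$, so $\Psi$ is an isometric embedding of $\schatten^{a}$ into $\Ell^{s}(\T;\schatten^{a})$ for every $s\in[1,\infty]$. Using $U(t)e_{j}=\ue^{2\pi\ui f(j)t}e_{j}$ one checks that the $n$-th Fourier coefficient of $\Psi(v)$ is $g_{n}:=\sum_{f(j)-f(k)=n}e_{j}ve_{k}$, and on the dense subspace of elements $v=e^{(N)}ve^{(N)}$ with $e^{(N)}:=\sum_{|j|\leq N}e_{j}$ these are finite sums, leading to the key intertwining identity
\[
T_{m}(\Psi(v))(t)=\sum_{n\in\Z}m_{n}g_{n}\ue^{2\pi\ui nt}=\Psi(M_{m,f}^{e}v)(t).
\]

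Combining the Fourier multiplier bound with the isometry property of $\Psi$ then gives, for $v$ in the dense subspace,
\[
\|M_{m,f}^{e}v\|_{\schatten^{a}}=\|\Psi(M_{m,f}^{e}v)\|_{\Ell^{a}(\T;\schatten^{a})}=\|T_{m}\Psi(v)\|_{\Ell^{a}(\T;\schatten^{a})}\leq C_{a,r}C_{m}\|v\|_{\schatten^{a}},
\]
which is \eqref{eq:Mmeafschatting} on this dense subspace, and $M_{m,f}^{e}$ extends uniquely by density to a bounded operator on $\schatten^{a}$ satisfying the same estimate. The hardest part will be aligning the decay parameter $r$ appearing in the Schur multiplier hypothesis with the admissible range of exponents in Corollary \ref{co:sharpintegrabilityintroTorus} so that both the $\gamma$-boundedness hypothesis and the condition $p\leq a\leq q$ required by Remark \ref{rem:Torusp=q} are satisfied simultaneously. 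The convergence of the truncated sums in \eqref{eq:DefMve} to this extension for general $v\in\schatten^{a}$ should then follow by applying the same transference to the truncated multipliers $m^{(N)}_{n}:=m_{n}\mathbf{1}_{\{|n|\leq N\}}$, which produces bona fide finite-sum Schur multiplier operators uniformly bounded by $C_{a,r}C_{m}$ and converging pointwise on the dense subspace, and hence by a standard uniform boundedness argument on all of $\schatten^{a}$.
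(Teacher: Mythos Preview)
Your proposal is correct and follows essentially the same Potapov--Sukochev transference argument as the paper: both apply Corollary~\ref{co:sharpintegrabilityintroTorus} with $X=Y=\schatten^a$ to obtain an $\Ell^p(\T;\schatten^a)\to\Ell^q(\T;\schatten^a)$ bound for $T_m$, then exploit that $t\mapsto\|U(t)vU(t)^*\|_{\schatten^a}$ is constant to pass back to $\schatten^a$. The only cosmetic differences are that the paper first reduces to $a\in(1,2)$ by duality and fixes $q=2$, and it applies the $\Ell^p\to\Ell^q$ bound directly (using constancy on both sides) rather than first descending to $\Ell^a\to\Ell^a$ via Remark~\ref{rem:Torusp=q} as you do; your treatment of the convergence in \eqref{eq:DefMve} via truncated multipliers is in fact more explicit than the paper's.
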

\begin{proof}
By duality it suffices to consider $a\in (1, 2)$, and by an approximation argument it suffices to consider finite rank operators $v\in \Cs^a$.
Let $p\in (1, a)$ be such that $\frac{1}{p} - \frac12 = \frac1r$. Since $\Cs^a$ has type $a$ and cotype $2$ (see \cite{HyNeVeWe2}) it follows from Theorem \ref{co:sharpintegrabilityintroTorus} that the Fourier multiplier $T_m$ associated with $(m_n)_{n\in \Z}$ is bounded from $L^a(\T;\Cs^a)$ to $L^2(\T;\Cs^a)$ with
\begin{equation}\label{eq:TmbddSchur}
\|T_m\|_{\calL(L^a(\T;\Cs^a), L^2(\T;\Cs^a))}\leq C_{p,a} C_m.
\end{equation}

As in the proof of \cite[Theorem 4]{Potapov-Sukochev11} one sees that
\begin{align*}
\|M_{m,f}^e v\|_{\Cs^a} &=
\Big\|\sum_{n\in \Z} m_n e^{2\pi i nt} v_n\Big\|_{\Cs^a} = \|T_m ((v_n)_{n\in \Z})(t)\|_{\Cs^a},
\end{align*}
where $v_n := \sum_{j,k\in \Z, f(j)-f(k) = n} e_j v e_k$ for $n\in \Z$. Similarly,
\[
\|v\|_{\Cs^a} = \Big\|\sum_{n\in \Z} e^{2\pi i nt} v_n\Big\|_{\Cs^a}.\]
Taking $L^q$ and $L^p$ norms over $t\in [0,1]$ in the above identities yields
\begin{align*}
\|M_{m,f}^e v\|_{\Cs^a} &= \|T_{m} (v_n)_{n\in \Z}\|_{L^q(0,1;\Cs^a)}
\\ & \leq \|T_m\|_{\calL(L^p(\T;\Cs^a),L^q(\T;\Cs^a))} \Big\|\sum_{n\in \Z} e^{2\pi i nt} v_n\Big\|_{L^p(0,1;\Cs^a)} = C_{p,a} C_m \|v\|_{\Cs^a},
\end{align*}
where we applied \eqref{eq:TmbddSchur} in the final step.
\end{proof}

\begin{problem}\label{prob:Schur}
Can we take $\frac1r = |\frac{1}{a} - \frac12|$ in Corollary \ref{cor:Schattenappl}?
\end{problem}

If the answer to the question in Problem \ref{prob:Schur} is negative, then the limitations of Theorem \ref{thm:sharpintegrabilityintro} and Corollary \ref{co:sharpintegrabilityintroTorus} are natural. Moreover, from the proof of the latter (see Theorem \ref{thm:sharpintegrability} below) it would then follow that the embedding $H^{\frac1a-\frac12}_a(\R;\Cs^a)\to \gamma(\R;\schatten^a)$ does not hold for $a\in (1,2)$. Here $\gamma(\R;\Cs^{a})$ is the $\Cs^{a}$-valued $\gamma$-space used in the proof of Theorem \ref{thm:sharpintegrability}.

\subsection{Fourier type assumptions}\label{Fourier type assumptions}

Before turning to more advanced multiplier theorems, we start with the case where we use the Fourier type of the Banach spaces to derive an analogue of the basic estimate $\|T_m\|_{\calL(\Ell^2(\R^d))}\leq \|m\|_{\infty}$.

\begin{proposition}\label{Lp-Lq multipliers Fourier type}
Let $X$ be a Banach space with Fourier type $p\in[1,2]$ and $Y$ a Banach space with Fourier cotype $q \in[2,\infty]$, and let $r\in[1,\infty]$ be such that $\tfrac{1}{r}=\tfrac{1}{p}-\tfrac{1}{q}$. Let $m:\Rd\to\La(X,Y)$ be an $X$-strongly measurable map such that $\norm{m(\cdot)}_{\La(X,Y)}\in\Ellr(\Rd)$. Then $T_{m}$ extends uniquely to a bounded map from $\Ellp(\Rd;X)$ into $\Ellq(\Rd;Y)$ with
\begin{align*}
\|T_{m}\|_{\La(\Ellp(\Rd;X),\Ellq(\Rd;Y))}\leq \F_{p,X,d}\,\F_{q',Y,d}\norm{\|m(\cdot)\|_{\La(X,Y)}}_{\Ellr(\Rd)}.
\end{align*}
\end{proposition}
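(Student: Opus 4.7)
The plan is to chain together three standard ingredients: Hausdorff--Young on the $Y$-side via Fourier cotype, a Hölder estimate for the pointwise product $m \cdot \widehat{f}$, and Hausdorff--Young on the $X$-side via Fourier type, in that order. Throughout I would work first with $f\in\Sw(\Rd;X)$ and then extend by density.

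First I would fix $f\in\Sw(\Rd;X)$ and write $T_{m}(f)=\F^{-1}(m\cdot\widehat{f}\,)$. Since $Y$ has Fourier cotype $q$, i.e.\ Fourier type $q'$, and since $\F^{-1}$ has the same operator norm as $\F$ between the corresponding $\Ell$-spaces (via the reflection $\xi\mapsto -\xi$), I would get
\begin{align*}
\|T_{m}(f)\|_{\Ellq(\Rd;Y)}=\|\F^{-1}(m\widehat{f}\,)\|_{\Ellq(\Rd;Y)}\leq \F_{q',Y,d}\,\|m\widehat{f}\|_{\Ell^{q'}(\Rd;Y)}.
\end{align*}

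Next I would estimate the $Y$-norm pointwise by $\|m(\xi)\widehat{f}(\xi)\|_{Y}\leq \|m(\xi)\|_{\La(X,Y)}\|\widehat{f}(\xi)\|_{X}$ and apply Hölder's inequality. The exponents match: since $\tfrac{1}{r}=\tfrac{1}{p}-\tfrac{1}{q}$ one checks $\tfrac{1}{q'}=\tfrac{1}{r}+\tfrac{1}{p'}$. Hence
\begin{align*}
\|m\widehat{f}\,\|_{\Ell^{q'}(\Rd;Y)}\leq \bigl\|\,\|m(\cdot)\|_{\La(X,Y)}\bigr\|_{\Ellr(\Rd)}\,\bigl\|\,\|\widehat{f}(\cdot)\|_{X}\bigr\|_{\Ell^{p'}(\Rd)}=\bigl\|\,\|m(\cdot)\|_{\La(X,Y)}\bigr\|_{\Ellr(\Rd)}\,\|\widehat{f}\,\|_{\Ell^{p'}(\Rd;X)}.
\end{align*}

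Finally, Fourier type $p$ of $X$ gives $\|\widehat{f}\,\|_{\Ell^{p'}(\Rd;X)}\leq \F_{p,X,d}\,\|f\|_{\Ellp(\Rd;X)}$. Combining the three estimates yields the claimed bound on $\Sw(\Rd;X)$. Since $p\in[1,2]$ and $\Sw(\Rd;X)$ is dense in $\Ellp(\Rd;X)$, the operator extends uniquely to $\Ellp(\Rd;X)\to\Ellq(\Rd;Y)$ with the same norm bound. There is no real obstacle here; the only mild subtlety is verifying that the intermediate objects $m\widehat{f}$ and $\F^{-1}(m\widehat{f}\,)$ are genuine elements of the relevant Bochner spaces, which follows from the computation itself together with the moderate-growth hypothesis on $m$ (guaranteeing that $T_{m}(f)\in\Sw'(\Rd;Y)$ is defined to begin with).
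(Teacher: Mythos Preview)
Your proof is correct and is essentially the same as the paper's: both chain together the Fourier (co)type estimates on $X$ and $Y$ with H\"older's inequality using $\tfrac{1}{q'}=\tfrac{1}{r}+\tfrac{1}{p'}$, working on $\Sw(\Rd;X)$ and extending by density. The only difference is the order in which you present the three inequalities, which is immaterial.
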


In Proposition \ref{prop:converseFouriertype3} we show that this multiplier result characterizes the Fourier type $p$ of $X$ for specific choices of $Y$, and the Fourier cotype $q$ of $Y$ for specific choices of $X$.
\begin{proof}
Let $f\in\Sw(\Rd;X)$. By H\"{o}lder's inequality,
\begin{align*}
\|m\widehat{f}\|_{\Ellqprime(\Rd;Y)}&\leq \norm{\|m(\cdot)\|_{\La(X,Y)}}_{\Ellr(\Rd)}\|\widehat{f}\|_{\Ellpprime(\Rd;X)}\\
&\leq \F_{p,X,d}\norm{\|m(\cdot)\|_{\La(X,Y)}}_{\Ellr(\Rd)}\|f\|_{\Ellp(\Rd;X)}.
\end{align*}
Since $\|\F^{-1}(g)\|_{\Ellq(\Rd;Y)}=\|\F(g)\|_{\Ellq(\Rd;Y)}$ for $g\in\Ellqprime(\Rd;Y)$, it follows that
\begin{align*}
\|T_{m}(f)\|_{\Ellq(\Rd;Y)}&\leq \F_{q',Y,d}\|m\widehat{f}\|_{\Ellqprime(\Rd;Y)}\\
&\leq \F_{p,X,d}\,\F_{q',Y,d}\norm{\|m(\cdot)\|_{\La(X,Y)}}_{\Ellr(\Rd)}\|f\|_{\Ellp(\Rd;X)},
\end{align*}
which concludes the proof.
\end{proof}

\begin{remark}\label{rem:Young's inequality}
It follows from Young's inequality (see \cite[Exercise 4.5.4]{Grafakos08} or \cite[Proposition 1.3.5]{ArBaHiNe11}) that $T_{m}:\Ellp(\Rd;X)\to\Ellq(\Rd;Y)$ is bounded with
\begin{align}\label{eq:young's inequality Fourier type}
\|T_{m}\|_{\La(\Ellp(\Rd;X),\Ellq(\Rd;Y))}\leq \|\F^{-1}m\|_{\Ell^{r'}(\Rd;\La(X,Y))}
\end{align}
for all $X$ and $Y$, $1\leq p\leq q\leq \infty$ and $r\in[1,\infty]$ such that $\tfrac{1}{r}=\tfrac{1}{p}-\tfrac{1}{q}$, and all $X$-measurable $m:\Rd\to\La(X,Y)$ of moderate growth at infinity for which $\F^{-1}m\in\Ell^{r'}(\Rd;\La(X,Y))$. In certain cases \eqref{eq:young's inequality Fourier type} is stronger than the result in Proposition \ref{Lp-Lq multipliers Fourier type}. For instance, if $r\in[1,2]$ and $\La(X,Y)$ has Fourier type $r$ (for $r>1$ this implies that either $X$ or $Y$ is finite-dimensional),
then
\begin{align*}
\|T_{m}\|_{\La(\Ellp(\Rd;X),\Ellq(\Rd;Y))}\leq \|\F^{-1}m\|_{\Ell^{r'}(\Rd;\La(X,Y))}\leq C\|m\|_{\Ell^{r}(\Rd;\La(X,Y))}
\end{align*}
for some constant $C\geq 0$. Therefore we recover the conclusion of Proposition \ref{Lp-Lq multipliers Fourier type} from Young's inequality in a very special case.
\end{remark}

\begin{remark}\label{rem:more general groups}
Proposition \ref{Lp-Lq multipliers Fourier type} (and Theorem \ref{Lp-Lq multipliers Fourier type2} below) can also be formulated for general abelian locally compact groups $G$, not just for $\Rd$. In that case one should assume that the Fourier transform is bounded from $\Ell^{p}(G;X)$ to $\Ell^{p'}(\widehat{G};X)$ for $p\in[1,2]$ and that the inverse Fourier transform is bounded from $\Ell^{q'}(\widehat{G};Y)$ to $\Ellq(G;Y)$ for $q\in[2,\infty]$. Here $\widehat{G}$ is the dual group of $G$. Then one works with symbols $m:\widehat{G}\to\La(X,Y)$ which are $X$-strongly measurable and such that $[\xi\mapsto\norm{m(\xi)}_{\La(X,Y)}]\in\Ellr(\widehat{G})$, where $\tfrac{1}{r}=\tfrac{1}{p}-\tfrac{1}{q}$. In the same way as in Proposition \ref{Lp-Lq multipliers Fourier type}, one then obtains a constant $C\geq 0$ independent of $m$ such that
\begin{align*}
\|T_{m}\|\leq C\norm{\|m(\cdot)\|_{\La(X,Y)}}_{\Ellr(\widehat{G})}.
\end{align*}
For $G = \T^d$ such results can also be deduced from the $\R^d$-case by applying the transference of Proposition \ref{prop:transference}.
\end{remark}

In the scalar setting we noted in \eqref{weakLrcondition} that the conclusion of Proposition \ref{Lp-Lq multipliers Fourier type} holds under the weaker condition $m\in \Ell^{r,\infty}(\R^d)$. In certain cases we can prove such a result in the vector-valued setting.

\begin{theorem}\label{Lp-Lq multipliers Fourier type2}
Let $X$ be a Banach space with Fourier type $p_0\in(1,2]$ and $Y$ a Banach space with Fourier cotype $q_0\in[2,\infty)$, and let $p\in (1,p_0)$ and $q\in (q_0, \infty)$. Let $r\in[1,\infty]$ be such that $\tfrac{1}{r}=\tfrac{1}{p}-\tfrac{1}{q}$. Let $m:\Rd\to\La(X,Y)$ be an $X$-strongly measurable map such that $[\xi\mapsto\norm{m(\xi)}_{\La(X,Y)}]\in\Ell^{r,\infty}(\Rd)$. Then $T_{m}$ extends uniquely to a bounded map from $\Ellp(\Rd;X)$ into $\Ellq(\Rd;Y)$ with
\begin{align*}
\|T_{m}\|_{\La(\Ellp(\Rd;X),\Ellq(\Rd;Y))}\leq C\norm{\|m(\cdot)\|_{\La(X,Y)}}_{\Ell^{r,\infty}(\Rd)},
\end{align*}
where $C\geq 0$ is independent of $m$.
\end{theorem}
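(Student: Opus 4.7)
The strategy is to derive this weak-$\Ell^{r}$ version from the strong-$\Ell^{r}$ version of Proposition~\ref{Lp-Lq multipliers Fourier type} by applying real interpolation to the bilinear map $V(m,f):=T_{m}f$. The strict inequalities $p<p_{0}$ and $q>q_{0}$ leave exactly enough room to install a pair of endpoints flanking $(p,q)$ on both sides of the target relation $\tfrac{1}{r}=\tfrac{1}{p}-\tfrac{1}{q}$ while keeping all Fourier type/cotype assumptions available.

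First I would select $p_{1},p_{2}\in(1,p_{0}]$ and $q_{1},q_{2}\in[q_{0},\infty)$ together with $\theta\in(0,1)$ such that $\tfrac{1}{p}=\tfrac{1-\theta}{p_{1}}+\tfrac{\theta}{p_{2}}$ and $\tfrac{1}{q}=\tfrac{1-\theta}{q_{1}}+\tfrac{\theta}{q_{2}}$, arranging the choice so that $r_{1}\neq r_{2}$, where $\tfrac{1}{r_{j}}:=\tfrac{1}{p_{j}}-\tfrac{1}{q_{j}}$ (e.g.~via $1/p_{j}=1/p\pm\delta_{p}$ and $1/q_{j}=1/q\pm\delta_{q}$ with $\delta_{p}\neq\delta_{q}$ sufficiently small); the relation $\tfrac{1}{r}=\tfrac{1-\theta}{r_{1}}+\tfrac{\theta}{r_{2}}$ is then automatic. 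Because each $p_{j}\leq p_{0}$ still makes $X$ of Fourier type $p_{j}$ and each $q_{j}\geq q_{0}$ still makes $Y$ of Fourier cotype $q_{j}$, Proposition~\ref{Lp-Lq multipliers Fourier type} supplies the bilinear endpoint bounds
\begin{align*}
\|V(n,f)\|_{\Ell^{q_{j}}(\Rd;Y)}\leq C_{j}\,\big\|\|n(\cdot)\|_{\La(X,Y)}\big\|_{\Ell^{r_{j}}(\Rd)}\,\|f\|_{\Ell^{p_{j}}(\Rd;X)}\qquad(j=1,2).
\end{align*}

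I would then invoke the Lions--Peetre bilinear real interpolation theorem (see, e.g., \cite{Bergh-Lofstrom76}): whenever $\tfrac{1}{\sigma}\leq\tfrac{1}{s_{1}}+\tfrac{1}{s_{2}}$, a bilinear operator bounded at two endpoint couples extends to $(A_{1},A_{2})_{\theta,s_{1}}\times(B_{1},B_{2})_{\theta,s_{2}}\to(C_{1},C_{2})_{\theta,\sigma}$. The choice $s_{1}=\infty$, $s_{2}=p$, $\sigma=q$ verifies the admissibility $\tfrac{1}{q}\leq 0+\tfrac{1}{p}$ (which holds since $r\geq 1$ forces $p\leq q$). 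Combining with the standard scalar identity $(\Ell^{r_{1}}(\Rd),\Ell^{r_{2}}(\Rd))_{\theta,\infty}=\Ell^{r,\infty}(\Rd)$ and the Bochner-valued identities $(\Ell^{p_{1}}(\Rd;X),\Ell^{p_{2}}(\Rd;X))_{\theta,p}=\Ell^{p,p}(\Rd;X)=\Ell^{p}(\Rd;X)$ and $(\Ell^{q_{1}}(\Rd;Y),\Ell^{q_{2}}(\Rd;Y))_{\theta,q}=\Ell^{q,q}(\Rd;Y)=\Ell^{q}(\Rd;Y)$ produces
\begin{align*}
\|T_{m}f\|_{\Ell^{q}(\Rd;Y)}\leq C\,\big\|\|m(\cdot)\|_{\La(X,Y)}\big\|_{\Ell^{r,\infty}(\Rd)}\,\|f\|_{\Ell^{p}(\Rd;X)}
\end{align*}
for all $f\in\Sw(\Rd;X)$; the unique bounded extension to $\Ell^{p}(\Rd;X)$ then follows by density.

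The main technical point will be the correct application of bilinear real interpolation with mixed Lorentz indices and the precise identification of the Bochner-valued interpolation spaces. One should note that linear real interpolation applied to $n\mapsto T_{n}f$ with $f$ fixed only yields the strictly weaker conclusion $\Ell^{r,q}(\Rd)\to\Ell^{q}(\Rd;Y)$, so the passage to the genuine weak input space $\Ell^{r,\infty}(\Rd)$ relies essentially on the bilinear admissibility $\tfrac{1}{\sigma}\leq\tfrac{1}{s_{1}}+\tfrac{1}{s_{2}}$. As a concrete substitute avoiding the abstract bilinear theorem, one can instead decompose $m=\sum_{n\in\Z}m\cdot\ind_{\{2^{n}\leq\|m(\cdot)\|_{\La(X,Y)}<2^{n+1}\}}$ and a test function $g\in\Ell^{q'}(\Rd;Y^{*})$ analogously at dyadic heights, apply Proposition~\ref{Lp-Lq multipliers Fourier type} to each piece at the two endpoints (the exponents $1-r/r_{j}$ having opposite signs producing geometrically convergent double sums), and conclude by duality; this is in essence the bilinear interpolation argument carried out by hand.
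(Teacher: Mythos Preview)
Your route is genuinely different from the paper's, but the central step has a gap. The Lions--Peetre bilinear real interpolation theorem (as in \cite{Bergh-Lofstrom76}) gives $T:\bar{A}_{\theta,s_{1}}\times\bar{B}_{\theta,s_{2}}\to\bar{C}_{\theta,\sigma}$ only under $\tfrac{1}{\sigma}=\tfrac{1}{s_{1}}+\tfrac{1}{s_{2}}-1$, not under your condition $\tfrac{1}{\sigma}\leq\tfrac{1}{s_{1}}+\tfrac{1}{s_{2}}$. With $s_{1}=\infty$ and $s_{2}=p>1$ this forces $\tfrac{1}{\sigma}<0$, so the abstract theorem yields nothing. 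Your stated admissibility fails for general bilinear maps: take pointwise multiplication with $A_{0}=\ell^{1}$, $A_{1}=\ell^{\infty}$, $B_{0}=\ell^{\infty}$, $B_{1}=\ell^{1}$, $C_{0}=C_{1}=\ell^{1}$; then $\bar{A}_{1/2,\infty}=\ell^{2,\infty}$, $\bar{B}_{1/2,2}=\ell^{2}$, $\bar{C}_{1/2,2}=\ell^{1}$, yet $a_{n}=n^{-1/2}$ and $b_{n}=n^{-1/2}(\log(n+1))^{-1}$ give $ab\notin\ell^{1}$. Your dyadic ``by hand'' substitute is more promising---and indeed the pointwise-product structure $T_{m}f=\F^{-1}(m\widehat{f})$ is exactly what makes Lorentz--H\"older applicable---but as written it only produces bounds in terms of $\|f\|_{p_{1}},\|f\|_{p_{2}},\|g\|_{q_{1}'},\|g\|_{q_{2}'}$; turning this into $\|f\|_{p}\|g\|_{q'}$ needs real work.

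The paper sidesteps bilinear interpolation entirely by keeping $r$ \emph{fixed} and perturbing only $(p,q)$: with $\tfrac{1}{p_{j}}=\tfrac{1}{p}\pm\varepsilon$, $\tfrac{1}{q_{j}}=\tfrac{1}{q}\pm\varepsilon$ one still has $\tfrac{1}{p_{j}}-\tfrac{1}{q_{j}}=\tfrac{1}{r}$. For \emph{fixed} $m$ the Lorentz--H\"older inequality gives $\|m\widehat{f}\|_{\Ell^{q_{j}',\infty}(\Rd;Y)}\leq C\norm{\|m(\cdot)\|}_{\Ell^{r,\infty}}\|\widehat{f}\|_{\Ell^{p_{j}'}(\Rd;X)}$, and real interpolation of the Fourier cotype of $Y$ yields $\F^{-1}:\Ell^{q_{j}',\infty}(\Rd;Y)\to\Ell^{q_{j},\infty}(\Rd;Y)$; together these give $T_{m}:\Ell^{p_{j}}(\Rd;X)\to\Ell^{q_{j},\infty}(\Rd;Y)$ with norm controlled by $\norm{\|m(\cdot)\|}_{\Ell^{r,\infty}}$. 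Ordinary \emph{linear} real interpolation in $f$ then gives $T_{m}:\Ell^{p}(\Rd;X)\to\Ell^{q,p}(\Rd;Y)\hookrightarrow\Ell^{q}(\Rd;Y)$. The weak-$\Ell^{r}$ condition is thus absorbed at the H\"older step rather than through interpolation in $m$, which is what makes the argument go through cleanly.
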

\begin{proof}
Observe that by real interpolation (see \cite[1.18.6]{Triebel95} and \cite[(2.33)]{KrPeSe82}) we obtain $\F:\Ell^{v', \infty}(\R^d;Y)\to \Ell^{v, \infty}(\R^d;Y)$ for all $v\in (q_0,\infty)$.

Let $p_1, p_2, q_1, q_2\in (1, \infty)$ be such that
\[\frac{1}{p_1} = \frac{1}{p}+\varepsilon, \ \frac{1}{p_2} = \frac{1}{p} - \varepsilon, \ \frac{1}{q_1} = \frac{1}{q}+\varepsilon, \ \frac{1}{q_2} = \frac{1}{q} - \varepsilon\]
for $\varepsilon>0$ so small that $p_2<p_0$ and $q_1>q_0$. Note that
\[
\frac{1}{p_j} - \frac{1}{q_j} = \frac{1}{p} - \frac{1}{q} = \frac1r.
\]
Let $f\in\Sw(\Rd;X)$. By H\"{o}lder's inequality (see \cite[Exercise 1.4.19]{Grafakos08} or \cite[Theorem 3.5]{O'Neil63}), for $j=1,2$,
\begin{align*}
\|m\widehat{f}\|_{\Ell^{q_j',\infty}(\Rd;Y)}&\leq C \norm{\|m(\cdot)\|_{\La(X,Y)}}_{\Ell^{r,\infty}(\Rd)}\|\widehat{f}\|_{\Ell^{p_j'}(\Rd;X)}\\
&\leq C\norm{\|m(\cdot)\|_{\La(X,Y)}}_{\Ell^{r,\infty}(\Rd)}\|f\|_{\Ell^{p_j}(\Rd;X)}
\end{align*}
for $C\geq 0$ independent of $m$ and $f$, where we used the Fourier type $p_j$ of $X$ and $\|\cdot\|_{p_j',\infty}\leq \|\cdot\|_{p_j'}$. It follows from the first observation and the estimate above that
\begin{align*}
\|T_{m}(f)\|_{\Ell^{q_j, \infty}(\Rd;Y)}&\leq C\|m\widehat{f}\|_{\Ell^{q_j',\infty}(\Rd;Y)}\\
&\leq C\norm{\|m(\cdot)\|_{\La(X,Y)}}_{\Ell^{r,\infty}(\Rd)}\|f\|_{\Ell^{p_j}(\Rd;X)}.
\end{align*}
Hence $T_m:\Ell^{p_j}(\Rd;X)\to \Ell^{q_j, \infty}(\Rd;Y)$ is bounded for $j\in \{1,2\}$.
By real interpolation (see \cite[Theorem 1.18.6.2]{Triebel95}) we find that $T_m:\Ellp(\R^d;X)\to \Ell^{q,p}(\R^d;Y)$ and the required result follows from $\Ell^{q,p}(\R^d;Y)\hookrightarrow \Ellq(\R^d;Y)$ (see \cite[Proposition 1.4.10]{Grafakos08}).
\end{proof}

The above result provides an analogue of \cite[Theorem 1.12]{Hormander60}. In general, we do not know the ``right'' geometric conditions under which such a result holds. We formulate the latter as an open problem.

\begin{problem}
Let $1<p\leq 2\leq q<\infty$ and let $r\in[1,\infty]$ be such that $\tfrac{1}{r}=\tfrac{1}{p}-\tfrac{1}{q}$. Classify those Banach spaces $X$ and $Y$ for which $T_{m}\in\La(\Ellp(\Rd;X),\Ellq(\Rd;Y))$ for all $X$-strongly measurable maps $m:\Rd\to\La(X,Y)$ such that $\|m(\cdot)\|_{\calL(X,Y)}\in \Ell^{r,\infty}(\R^d)$.
\end{problem}
A similar question can be asked for the case where $X = Y$ and $m$ is scalar-valued.

We will now show that the Fourier multiplier result in Proposition \ref{Lp-Lq multipliers Fourier type} characterizes the Fourier type of the underlying Banach spaces. To this end we need the following lemma.

\begin{lemma}\label{lem:estimate for sharpness}
Let $X$ and $Y$ be Banach spaces. Let $p\in [1,2]$, $q\in [2,\infty]$ and $r\in (1, \infty]$ be such that $\tfrac{1}{r} = \tfrac{1}{p} - \tfrac1{q}$. Assume that for all $m\in \Ellr(\Rd;\La(X,Y))$ the operator $T_m:\Ellp(\Rd;X)\to \Ellq(\Rd;Y)$ is bounded. Then there is a constant $C\geq0$ such that
for all $f\in \Sw(\Rd;X)$ and $g\in \Sw(\R^d;Y^*)$
\begin{equation}\label{eq:toprovebil}
\big\| \|\widehat{f}(\cdot)\|_X \|\widehat{g}(\cdot)\|_{Y^*} \big\|_{\Ell^{r'}(\Rd)} \leq C\|f\|_{\Ellp(\Rd;X)} \|g\|_{\Ell^{q'}(\Rd;Y^*)}.
\end{equation}
\end{lemma}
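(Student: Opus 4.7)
The plan is to dualize the target inequality. Since $r\in(1,\infty]$, for any nonnegative measurable $\ph$ on $\Rd$ one has
\[
\|\ph\|_{\Ell^{r'}(\Rd)}=\sup\Big\{\int_{\Rd}h\,\ph\,\ud\xi:h\in\Ell^r(\Rd),\ h\geq 0,\ \|h\|_{\Ellr}\leq 1\Big\},
\]
so it suffices to bound $\int_{\Rd}h(\xi)\|\widehat f(\xi)\|_X\|\widehat g(\xi)\|_{Y^*}\,\ud\xi$ by $C\|h\|_{\Ellr}\|f\|_{\Ellp(\Rd;X)}\|g\|_{\Ellqprime(\Rd;Y^*)}$ uniformly over nonnegative $h\in\Ell^r(\Rd)$. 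I will realise this integral as $|\lb T_m f,\tilde g\rb|$ for a rank-one-valued symbol $m$ built from $h$, $\widehat f$, and $\widehat g$, thereby reducing the estimate to the operator bound on $T_m$.

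First I would promote the pointwise hypothesis to a uniform estimate via the closed graph theorem applied to the linear map $S\colon\Ell^r(\Rd;\La(X,Y))\to\La(\Ellp(\Rd;X),\Ellq(\Rd;Y))$, $m\mapsto T_m$. Closedness is checked on Schwartz inputs: if $m_n\to m$ in $\Ell^r$ and $T_{m_n}\to T$ in operator norm, then for $f\in\Sw(\Rd;X)$ the estimate $\|(m_n-m)\widehat f\|_{\Ell^r(\Rd;Y)}\leq\|m_n-m\|_{\Ell^r(\Rd;\La(X,Y))}\|\widehat f\|_{\Ellinfty(\Rd;X)}$ forces $T_{m_n}f\to T_mf$ in $\Sw'(\Rd;Y)$, while also $T_{m_n}f\to Tf$ in $\Ellq(\Rd;Y)\hookrightarrow\Sw'(\Rd;Y)$; hence $T=T_m$. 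This yields a constant $C\geq 0$ with $\|T_m\|_{\La(\Ellp(\Rd;X),\Ellq(\Rd;Y))}\leq C\|m\|_{\Ell^r(\Rd;\La(X,Y))}$ for every symbol $m$.

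Next, fix $f\in\Sw(\Rd;X)$, $g\in\Sw(\Rd;Y^*)$ and set $\tilde g(x):=g(-x)$, so that $\F^{-1}\tilde g=\widehat g$ and $\|\tilde g\|_{\Ellqprime}=\|g\|_{\Ellqprime}$. The Parseval-type identity reads
\[
\lb T_mf,\tilde g\rb=\int_{\Rd}\lb m(\xi)\widehat f(\xi),\widehat g(\xi)\rb_{Y,Y^*}\,\ud\xi.
\]
For $\eps>0$ and a nonnegative $h\in\Ell^r(\Rd)$, Hahn-Banach produces unit vectors $x^*_\xi\in X^*$ with $\lb\widehat f(\xi),x^*_\xi\rb\geq(1-\eps)\|\widehat f(\xi)\|_X$ and $y_\xi\in Y$ with $\lb y_\xi,\widehat g(\xi)\rb\geq(1-\eps)\|\widehat g(\xi)\|_{Y^*}$. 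The rank-one-valued symbol $m(\xi)x:=h(\xi)\lb x,x^*_\xi\rb y_\xi$ satisfies $\|m(\xi)\|_{\La(X,Y)}=h(\xi)$, so $\|m\|_{\Ell^r(\Rd;\La(X,Y))}=\|h\|_{\Ellr}$, and the integrand above is at least $(1-\eps)^2h(\xi)\|\widehat f(\xi)\|_X\|\widehat g(\xi)\|_{Y^*}$. Combining this with $|\lb T_m f,\tilde g\rb|\leq\|T_mf\|_{\Ellq}\|\tilde g\|_{\Ellqprime}$ and the closed graph bound, then letting $\eps\downarrow 0$ and taking the supremum over $h$, yields \eqref{eq:toprovebil}.

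The main obstacle is to ensure that the selection $\xi\mapsto(x^*_\xi,y_\xi)$ is measurable enough for $m$ to lie in $\Ell^r(\Rd;\La(X,Y))$ in the Bochner sense. Since $\widehat f$ and $\widehat g$ are continuous with values in separable subspaces of $X$ and $Y^*$, this can be handled by a standard measurable selection theorem; alternatively, one can first establish \eqref{eq:toprovebil} for $f$ and $g$ whose ranges lie in finite-dimensional subspaces---where the $\eps$-norming functionals can be chosen piecewise constant on a countable measurable partition---and then pass to the general case by density of such functions in $\Sw(\Rd;X)$ and $\Sw(\Rd;Y^*)$, both sides of the inequality being continuous under such approximation.
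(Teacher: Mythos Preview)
Your argument is correct and shares the essential ingredients with the paper's proof: both invoke the closed graph theorem to upgrade the pointwise hypothesis to a uniform bound $\|T_m\|\leq C\|m\|_{\Ell^r}$, and both then test against rank-one--valued symbols to recover the bilinear estimate \eqref{eq:toprovebil}. The difference lies in how the rank-one symbol is constructed. You dualize directly, choosing for each $\xi$ an $\eps$-norming pair $(x_\xi^*,y_\xi)$ and weighting by an arbitrary nonnegative $h\in\Ell^r$; this forces you to confront the measurability of the selection $\xi\mapsto(x_\xi^*,y_\xi)$, which you correctly flag and resolve (a countably-valued selection from a norming sequence for the separable range of $\widehat f$, respectively $\widehat g$, is the cleanest fix). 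The paper instead approximates $\widehat f$ and $\widehat g$ by simple functions $\zeta=\sum_k\one_{A_k}x_k$ and $\eta=\sum_k\one_{A_k}y_k^*$ on a common partition, so that only finitely many norming vectors are needed and the resulting symbol $m(\xi)x=\sum_k c_k\one_{A_k}(\xi)\lb x,x_k^*\rb y_k$ is automatically a simple $\La(X,Y)$-valued function; one then optimizes over the scalar weights $c_k$ and passes back from $\zeta,\eta$ to $\widehat f,\widehat g$ via H\"older's inequality with $\tfrac{1}{r'}=\tfrac{1}{p'}+\tfrac{1}{q}$. Your route is conceptually cleaner (straight duality, no intermediate approximation of $\widehat f,\widehat g$), while the paper's route sidesteps any appeal to measurable selection at the cost of more explicit $\eps$-bookkeeping.
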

\begin{proof}
By the closed graph theorem there exists a constant $C\geq0$ such that
\begin{align*}
|\lb T_m f, g\rb| \leq C \|m\|_{\Ellr(\Rd;\La(X,Y))} \|f\|_{\Ellp(\Rd;X)} \|g\|_{\Ell^{q'}(\Rd;Y^*)}
\end{align*}
for all $f\in \Ellp(\Rd;X)$, $g\in\Ell^{q'}(\Rd;Y^{*})$ and $m\in \Ell^r(\Rd;\La(X,Y))$. It follows that, for all $f\in \Sw(\Rd;X)$ with $\|f\|_{p}\leq 1$ and $g\in \Sw(\Rd;Y^*)$ with $\|g\|_{q'}\leq 1$,
\begin{align}\label{eq:helpequation}
|\lb m \widehat{f}, \widehat{g}\rb| = |\lb T_m f, g\rb| \leq C \|m\|_{\Ellr(\Rd;\La(X,Y))}.
\end{align}
It suffices to show \eqref{eq:toprovebil} for fixed $f\in \Sw(\Rd;X)$ with $\|f\|_{p}=1$ and $g\in \Sw(\Rd;Y^*)$ with $\|g\|_{q'}=1$. Let $\varepsilon\in (0,1)$ and choose simple functions $\zeta:\R^d\to X$ and $\eta:\R^d\to Y^*$ such that
$\|\zeta-\widehat{f}\|_{p'}\leq \min(\varepsilon^{\frac12},\varepsilon \|\widehat{g}\|_{q}^{-1})$ and $\|\eta-\widehat{g}\|_{q}\leq \min(\varepsilon^{\frac12},\varepsilon\|\widehat{f}\|_{p'}^{-1})$. Then, by H\"older's inequality with $\frac1r + \frac{1}{p'} + \frac{1}{q}=1$ and by \eqref{eq:helpequation}, it follows that
\begin{align}
\nonumber |\lb m \zeta, \eta\rb| &\leq |\lb m (\zeta-\widehat{f}), \eta - \widehat{g}\rb| +|\lb m (\zeta-\widehat{f}),\widehat{g}\rb|+|\lb m\widehat{f}, \eta - \widehat{g}\rb|+ |\lb m \widehat{f}, \widehat{g}\rb|
\\ & \label{eq:nowitworks} \leq \|m\|_r \Big(\|\zeta-\widehat{f}\|_{p'} \|\eta-\widehat{g}\|_{q} + \|\zeta-\widehat{f}\|_{p'}\|\widehat{g}\|_{q}+\|\widehat{f}\|_{p'}\|\eta-\widehat{g}\|_{q}+ C\Big) \\
\nonumber
&\leq \|m\|_r (3\varepsilon + C)
\end{align}
for all $m\in \Ellr(\Rd;\La(X,Y))$. By considering a common refinement, we may suppose that $\zeta = \sum_{k=1}^n \one_{A_k} x_k$ and $\eta = \sum_{k=1}^n \one_{A_k} y_k^*$ for $n\in\N$, $x_{1}, \ldots, x_{n}\in X$, $y_{1}^{*},\ldots, y_{n}^{*}\in Y^{*}$ and $A_{1},\ldots, A_{n}\subseteq\Rd$ disjoint and of finite measure $\abs{A_{k}}$. For $1\leq k\leq n$ let $x_k^*\in X^*$ and $y_k\in Y$ of norm one be such that $\lb x_k,x_k^*\rb = \|x_k\|$ and $\lb y_k, y_k^*\rb\geq (1-\varepsilon) \|y_k^*\|$. Let $m:\R^d\to \calL(X,Y)$ be given by
\begin{align*}
m(\xi) x :=\sum_{k=1}^n c_k \one_{A_k}(\xi) \lb x, x_k^*\rb y_k\qquad(\xi\in\Rd, x\in X),
\end{align*}
where $c_1, \ldots, c_n\in\R$. Then \eqref{eq:nowitworks} implies
\begin{align*}
(1-\varepsilon) \sum_{k=1}^n c_{k}\abs{A_k} \|x_k\| \, \|y^{*}_k\| \leq  (C + 3\varepsilon) \Big(\sum_{k=1}^n |c_k|^r \abs{A_k}\Big)^{\frac1r}.
\end{align*}
By taking the supremum over all $c_k$'s with $\sum_{k=1}^n |c_k|^r \abs{A_k}\leq 1$ we find
\begin{align*}
(1-\varepsilon)\big\| \|\zeta(\cdot)\|_X \|\eta(\cdot)\|_{Y^*} \big\|_{\Ell^{r'}(\Rd)}=(1-\varepsilon)  \Big(\sum_{k=1}^n \abs{A_k} \|x_k\|^{r'} \|y^{*}_k\|^{r'}\Big)^{\frac{1}{r'}} \leq  (C +3\varepsilon).
\end{align*}
Therefore, using this estimate, the reverse triangle inequality and H\"{o}lder's inequality (with $\tfrac{1}{r'}=\tfrac{1}{p'}+\tfrac{1}{q}$), we obtain
\begin{align*}
\big\| &\|\widehat{f}(\cdot)\|_X \|\widehat{g}(\cdot)\|_{Y^*} \big\|_{\Ell^{r'}(\Rd)}
\\ & \leq \big\| \|\widehat{f}(\cdot)\|_X \|\widehat{g}(\cdot)\|_{Y^*}-\|\zeta(\cdot)\|_{X}\|\widehat{g}(\cdot)\|_{Y^{*}} \big\|_{\Ell^{r'}(\Rd)}\\
&\quad +\big\| \|\zeta(\cdot)\|_{X}\|\widehat{g}(\cdot)\|_{Y^{*}}-\|\zeta(\cdot)\|_X \|\eta(\cdot)\|_{Y^*} \big\|_{\Ell^{r'}(\Rd)} +\big\| \|\zeta(\cdot)\|_X \|\eta(\cdot)\|_{Y^*} \big\|_{\Ell^{r'}(\Rd)}
\\ & \leq
\big\| \|\widehat{f}(\cdot)-\zeta(\cdot)\|_X \|\widehat{g}(\cdot)\|_{Y^*}\big\|_{\Ell^{r'}(\Rd)}
+ \big\| \|\zeta(\cdot)\|_X \|\eta(\cdot)-\widehat{g}(\cdot)\|_{Y^*}\big\|_{\Ell^{r'}(\Rd)}
+\frac{C +3\varepsilon}{1-\varepsilon}
\\ & \leq  \|\widehat{f}-\zeta\|_{p'}\|\widehat{g}\|_{q} +  \|\zeta\|_{p'}\|\eta-\widehat{g}\|_{q} + \frac{C +3\varepsilon}{1-\varepsilon}
\\ & \leq \varepsilon + (\|\widehat{f}-\zeta\|_{p'}+\|\widehat{f}\|_{p'})\|\eta-\widehat{g}\|_{q}
+\frac{C +3\varepsilon}{1-\varepsilon}
\leq
3\varepsilon + \frac{C +3\varepsilon}{1-\varepsilon}.
\end{align*}
Letting $\epsilon$ tend to zero yields \eqref{eq:toprovebil} for $\|f\|_{p}=1=\|g\|_{q'}$, as was to be shown.
\end{proof}

Now we are ready to show that, by letting $Y$ vary, the Fourier multiplier result in Proposition \ref{Lp-Lq multipliers Fourier type} characterizes the Fourier type of $X$, and vice versa.

\begin{proposition}\label{prop:converseFouriertype3}
Let $X$ and $Y$ be Banach spaces. Let $\frac{1}{r} = \frac{1}{p} - \frac{1}{q}$ with $p\in [1, 2]$, $q\in [2, \infty)$ and $r\in (1, \infty]$. Assume that for all $m\in \Ellr(\Rd;\La(X,Y))$ the operator $T_m:\Ell^p(\Rd;X)\to \Ell^q(\Rd;Y)$ is bounded.
\begin{enumerate}[$(1)$]
\item\label{item:converseFouriertype1} If $Y=\C$ and $q=2$, then $X$ has Fourier type $p$.
\item\label{item:converseFouriertype2} If $X = \C$ and $p=2$, then $Y$ has Fourier type $q'$.
\item\label{item:converseFouriertype3} If $Y = X^*$ and $q = p'$, then $X$ has Fourier type $p$.
\end{enumerate}
\end{proposition}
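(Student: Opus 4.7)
The uniform boundedness hypothesis in each part of the Proposition activates Lemma \ref{lem:estimate for sharpness}, which yields
\begin{equation*}
\bigl\|\,\|\widehat{f}(\cdot)\|_{X}\,\|\widehat{g}(\cdot)\|_{Y^{*}}\,\bigr\|_{L^{r'}(\R^{d})}\leq C\,\|f\|_{L^{p}(\R^{d};X)}\,\|g\|_{L^{q'}(\R^{d};Y^{*})}
\end{equation*}
for all $f\in\Sw(\R^{d};X)$ and $g\in\Sw(\R^{d};Y^{*})$. My plan is to extract each of the three Fourier type statements by inserting test functions tailored to the special structure of $X$ or $Y$ in that item.

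Part \eqref{item:converseFouriertype3} is the most direct. Here $Y^{*}=X^{**}$ contains $X$ isometrically, and $q=p'$ combined with $\tfrac{1}{r}=\tfrac{1}{p}-\tfrac{1}{p'}=\tfrac{2-p}{p}$ gives $\tfrac{1}{r'}=\tfrac{2}{p'}$. Viewing $f\in\Sw(\R^{d};X)$ as an element of $\Sw(\R^{d};X^{**})$ and setting $g=f$ in the Lemma, the left-hand side collapses to $\bigl\|\,\|\widehat{f}\|_{X}^{2}\,\bigr\|_{L^{p'/2}(\R^{d})}=\|\widehat{f}\|_{L^{p'}(\R^{d};X)}^{2}$ while the right-hand side equals $C\|f\|_{L^{p}(\R^{d};X)}^{2}$, yielding Fourier type $p$ of $X$ directly.

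For Part \eqref{item:converseFouriertype1}, $Y=Y^{*}=\C$ and $q'=2$ give $\tfrac{1}{r'}=\tfrac{1}{p'}+\tfrac{1}{2}$. Plancherel on the scalar $g$ converts the Lemma into
\begin{equation*}
\bigl\|\,\|\widehat{f}(\cdot)\|_{X}\,|\psi(\cdot)|\,\bigr\|_{L^{r'}(\R^{d})}\leq C\,\|f\|_{L^{p}(\R^{d};X)}\,\|\psi\|_{L^{2}(\R^{d})}
\end{equation*}
for $\psi:=\widehat{g}\in\Sw(\R^{d})$. I extend this by density to all $\psi\in L^{2}(\R^{d})$ and specialise to $\psi:=\|\widehat{f}(\cdot)\|_{X}^{p'/2}$, which lies in $L^{2}(\R^{d})$ because $\widehat{f}$ is Schwartz. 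The identity $p'\bigl(\tfrac{1}{r'}-\tfrac{1}{2}\bigr)=1$ then reduces the resulting estimate to $\|\widehat{f}\|_{L^{p'}(\R^{d};X)}\leq C\|f\|_{L^{p}(\R^{d};X)}$, which is exactly Fourier type $p$ of $X$. Part \eqref{item:converseFouriertype2} is symmetric: with $X=\C$ and $p=2$ one applies Plancherel to the scalar $f$ and chooses $|\widehat{f}|=\|\widehat{g}(\cdot)\|_{Y^{*}}^{q/2}$ to obtain Fourier type $q'$ of $Y^{*}$, and then invokes the self-duality of Fourier type (see, e.g., \cite{HyNeVeWe16,Pietsch-Wenzel98}) to transfer the conclusion to $Y$.

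The main subtlety is the density step in Parts \eqref{item:converseFouriertype1} and \eqref{item:converseFouriertype2}: the natural test function $\|\widehat{f}(\cdot)\|_{X}^{p'/2}$ is not smooth wherever $\widehat{f}$ has zeros, so it cannot be plugged into the Lemma directly. I handle this by fixing $f\in\Sw(\R^{d};X)$, so that $\|\widehat{f}(\cdot)\|_{X}$ is bounded and rapidly decaying, and noting that multiplication by $\|\widehat{f}(\cdot)\|_{X}$ defines a continuous linear map $L^{2}(\R^{d})\to L^{r'}(\R^{d})$; density of $\Sw(\R^{d})$ in $L^{2}(\R^{d})$ together with completeness of $L^{r'}(\R^{d})$ then extend the estimate, the extension being identified with the pointwise product by a standard subsequence a.e.\ convergence argument.
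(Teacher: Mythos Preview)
Your proof is correct and follows essentially the same strategy as the paper: all three parts begin with Lemma~\ref{lem:estimate for sharpness}, Part~\eqref{item:converseFouriertype3} is handled identically by taking $g=f$, and Part~\eqref{item:converseFouriertype2} is reduced to Fourier type of $Y^*$ and then passed to $Y$ by duality. The only cosmetic difference is in Parts~\eqref{item:converseFouriertype1} and~\eqref{item:converseFouriertype2}: where you extend the bilinear estimate to $\psi\in L^{2}(\R^{d})$ and then plug in the explicit extremizer $\psi=\|\widehat{f}\|_X^{p'/2}$, the paper instead takes the supremum over $\varphi\in\Sw(\R^{d})$ with $\|\varphi\|_{2}\leq 1$ and uses (implicitly, via the same density step) that this supremum equals $\|\widehat{f}\|_{L^{p'}(\R^{d};X)}$. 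The two computations are equivalent---your choice of $\psi$ is precisely the (normalized) maximizer in the paper's supremum---so no genuine difference in approach.
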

\begin{proof}
By Lemma \ref{lem:estimate for sharpness}, \eqref{eq:toprovebil} holds for some $C\geq 0$. Therefore in case \eqref{item:converseFouriertype1} we obtain, for fixed $f\in \Sw(\Rd;X)$ and for all $\varphi\in \Sw(\Rd)$,
\begin{align*}
\big\| \|\widehat{f}(\cdot)\|_X \abs{\varphi(\cdot)}  \big\|_{\Ell^{r'}(\Rd)} \leq C \|f\|_{\Ellp(\Rd;X)} \|\varphi\|_{\Elltwo(\Rd)},
\end{align*}
where we used the fact that $\F:\Elltwo(\Rd)\to \Elltwo(\Rd)$ is an isometry. Taking the supremum over all $\|\varphi\|_{\Elltwo(\Rd)}\leq 1$ we see that
\begin{align*}
\|\widehat{f}\|_{\Ell^{p'}(\Rd;X)}\leq C\|f\|_{\Ellp(\Rd;X)},
\end{align*}
and hence $X$ has Fourier type $p$. In case \eqref{item:converseFouriertype2} we deduce in the same way that $Y^*$ has Fourier type $q'$ and thus also that $Y$ has Fourier type $q'$, by duality.

Finally, for \eqref{item:converseFouriertype3} note that $\frac{1}{r'} = \frac{2}{p'}$. Thus, taking $f = g\in \Sw(\Rd;X)$ in \eqref{eq:toprovebil} yields
\[\|\widehat{f}\|_{\Ell^{p'}(\R^d;X)}^2 \leq C\|f\|_{\Ell^p(\R^d;X)}^2,\]
and the result follows.
\end{proof}

\begin{remark}
An alternative proof of Proposition \ref{prop:converseFouriertype3} can be given using the transference of Proposition \ref{prop:transference}. However, this yields worse bounds and it seems that the analogue in the type-cotype setting requires the same technique as in Proposition \ref{prop:converseFouriertype3}. The estimate which can be proved under the assumption of Lemma \ref{lem:estimate for sharpness} is as follows.
There is a constant $C\geq0$ such that
for all $(x_k)_{|k|\leq n}$ in $X$ and $(y_k^*)_{|k|\leq n}$ in $Y^*$,
\[
\Big(\sum_{|k|\leq n} \|x_k\|_X^{r'} \|y_k^*\|_Y^{r'}\Big)^{\frac{1}{r'}} \leq C\Big\|\sum_{|k|\leq n} e_k x_k\Big\|_{\Ell^p(\T^d;X)} \Big\|\sum_{|k|\leq n} e_k y_k^*\Big\|_{\Ell^{q'}(\T^d;Y^*)}.
\]
\end{remark}

We end this section with a simple example which shows that the geometric limitation in Theorem \ref{Lp-Lq multipliers Fourier type} is also natural in the case $X = Y = \ell^u$. We will come back to this in Example \ref{example sharpness R-bounds}, where type and cotype will be used to derive different results.

\begin{example}\label{ex:Fouriertype}
Let $p\in (1, 2]$, and for $q\in [2, \infty)$ let $r\in(1,\infty]$ be such that $\frac{1}{r} = \frac{1}{p} - \frac{1}{q}$.
Let $u\in[1, \infty)$ and let $X:=\ell^{u}$. Let $(e_j)_{j\in\N_{0}}\subseteq X$ be the standard basis of $X$, and for $k\in\N$ let $S_{k}\in \La(X)$ be such that $S_{k}(e_{j}):= e_{j+k}$ for $j\in\N_{0}$. Let $m:\R\to\La(\ell^{u})$ be given by $m(\xi):=\sum_{k=1}^{\infty} c_k \ind_{(k-1,k]}(\xi)S_{k}$ for $\xi\in\R$, where $c_k = k^{-\frac1r} \log(k+1)^{-2}$ for $k\in\N$. Observe that
\begin{align*}
\int_{\R} \|m(\xi)\|_{\calL(X)}^r \,\ud \xi = \sum_{k=1}^{\infty} c_k^r <\infty,
\end{align*}
with the obvious modification for $r=\infty$.
If $u \in [p,p']$, then $X$ has Fourier type $p$ and Fourier cotype $q=p'$. Thus by Proposition \ref{Lp-Lq multipliers Fourier type}, in this case $T_m:\Ellp(\R;X)\to\Ellq(\R;X)$ is bounded.

We show that this result is sharp in the sense that for $u\notin [p,p']$ the conclusion is false. This shows that Proposition \ref{Lp-Lq multipliers Fourier type} is optimal in the exponent of the Fourier type of the space for $X=Y=\ell^{u}$.

Let $q\in [2, \infty)$ and assume that $T_m\in \calL(\Ellp(\R;X),\Ellq(\R;X))$. Let, for $k\in\N$, $\ph_{k}:\R\to\C$ be such that $\widehat{\ph_{k}}=\one_{(k-1,k]}$ and let, for $n\in\N$, $f := \sum_{k=n+1}^{2n} \ph_{k} e_0$. Then
\begin{align*}
\|T_{m}(f)(t)\|_{X} =  \Big\|\sum_{k=n+1}^{2n} c_k \varphi_{k}(t) e_{k}\Big\|_{\ell^u} = \Big(\sum_{k=n+1}^{2n} |c_k|^u |\varphi_{k}(t)|^u\Big)^{\frac1u}
\end{align*}
for each $t\in\R$. Since $|\varphi_{k}(t)| = \big|\frac{\sin(\pi t)}{\pi t}\big|$ for all $t\in\R$ and $k\in\N_{0}$,
\begin{align*}
\|T_{m}(f)\|_{\Ell^q(\R;X)} \geq n^{\frac1u} |c_{2n}|  \|\varphi_1\|_{\Ell^q(\R)} \geq C_1 n^{\frac1u - \frac1r}  \log(n)^{-2}.
\end{align*}
for some $C_1\in(0,\infty)$. On the other hand, $\|f\|_{\Ell^p(\R;X)} = \big\|\sum_{k=n+1}^{2n} \varphi_k\big\|_{\Ell^p(\R)}$. Now, $\big|\sum_{k=n+1}^{2n} \varphi_{k}(t)\big| = \big|\frac{\sin(\pi nt)}{\pi t}\big|$ for all $t\in\R$, since $\sum_{k=n+1}^{2n} \widehat{\varphi}_{k} = \ind_{(n,2n]}$. Therefore there exists a constant $C_{2}\in(0,\infty)$ such that $\|f\|_{\Ell^p(\R;\ell^u)} = C_{2} n^{1-\frac{1}{p}}$.
It follows that
\begin{align*}
C_1 n^{\frac1u - \frac1r}  \log(n)^{-2} \leq \|T_m\|_{\calL(\Ellp(\R;X),\Ellq(\R;X))} C_{2} n^{1-\frac{1}{p}}.
\end{align*}
Letting $n\to \infty$ we deduce that $\frac{1}{u}\leq 1-\frac1p + \frac1r = \frac{1}{q'}$.  Thus, in the special case $q = p'$, we obtain $u\geq p$. By a duality argument one sees that also $u\leq p'$.
\end{example}

\subsection{Type and cotype assumptions}\label{sec:Bessel1}

In Proposition \ref{Lp-Lq multipliers Fourier type} and Theorem \ref{Lp-Lq multipliers Fourier type2} we obtained Fourier multiplier results under Fourier type assumptions on the spaces $X$ and $Y$. In this section we will present multiplier results under the less restrictive geometric assumptions of type $p$ and cotype $q$ on the underlying spaces $X$ and $Y$.

First we prove Theorem \ref{thm:sharpintegrabilityintro} from the Introduction.

\begin{theorem}\label{thm:sharpintegrability}
Let $X$ be a Banach space with type $p_0\in(1,2]$ and $Y$ a Banach space with cotype $q_0\in[2,\infty)$, and let $p\in (1, p_0)$ and $q\in (q_0, \infty)$, $r\in(1,\infty)$ be such that $\tfrac{1}{r}=\tfrac{1}{p}-\tfrac{1}{q}$. Let $m:\Rd\setminus\{0\}\to\La(X,Y)$ be an $X$-strongly measurable map such that
$\{\abs{\xi}^{\frac{d}{r}}m(\xi)\mid \xi\in\Rd\setminus\{0\}\}\subseteq\La(X,Y)$ is $\gamma$-bounded.
Then $T_{m}$ extends uniquely to a bounded map $\widetilde{T_{m}}\in\La(\Ellp(\Rd;X),\Ellq(\Rd;Y))$ with
\begin{align*}
\|\widetilde{T_{m}}\|_{\La(\Ellp(\Rd;X),\Ellq(\Rd;Y))}\leq C\gamma(\{\abs{\xi}^{\frac{d}{r}}m(\xi)\mid \xi\in\Rd\setminus\{0\}\}),
\end{align*}
where $C\geq0$ is independent of $m$. Moreover, if $p_0 = 2$ (or $q_0 = 2)$, then one can also take $p = 2$ (resp.~$q=2$).
\end{theorem}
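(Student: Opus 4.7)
The plan is to factor $T_m$ through a $\gamma$-radonifying space in order to separate the singular scalar factor $\abs{\xi}^{-d/r}$ from the $\gamma$-bounded piece $N(\xi) := \abs{\xi}^{d/r} m(\xi)$. Writing $\gamma(\Rd;Z) := \gamma(\Elltwo(\Rd);Z)$ for the $\gamma$-radonifying operators from $\Elltwo(\Rd)$ into a Banach space $Z$, I would split the Riesz exponent as $d/r = \delta + \delta'$ with $\delta := d(\tfrac{1}{p} - \tfrac{1}{2}) \geq 0$ and $\delta' := d(\tfrac{1}{2} - \tfrac{1}{q}) \geq 0$.

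I would then aim for the factorization
\begin{equation*}
\Ellp(\Rd;X) \xrightarrow{\ \iota\ } \gamma(\Rd;X) \xrightarrow{\ T_N\ } \gamma(\Rd;Y) \xrightarrow{\ \iota'\ } \Ellq(\Rd;Y),
\end{equation*}
in which $\iota$ is the Fourier multiplier with scalar symbol $\abs{\xi}^{-\delta}$, viewed as a $\gamma$-radonifying embedding of a homogeneous Bessel space (valid because $X$ has type $p_0 > p$); $T_N$ is the Fourier multiplier with symbol $N$ acting between $\gamma$-spaces; and $\iota'$ is the Fourier multiplier with scalar symbol $\abs{\xi}^{-\delta'}$, viewed as the dual cotype-$q_0$ embedding $\gamma(\Rd;Y) \hookrightarrow \Ellq(\Rd;Y)$. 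The composite Fourier symbol is $\abs{\xi}^{-\delta}\, N(\xi)\, \abs{\xi}^{-\delta'} = m(\xi)$, so this composition coincides with $T_m$ on $\Sw(\Rd;X)$.

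The middle step $T_N$ is a standard $\gamma$-Fourier multiplier theorem: $\gamma$-bounded operator-valued symbols define bounded Fourier multipliers between $\gamma$-spaces, with norm controlled by the $\gamma$-bound of the symbol. The main obstacle lies in the two $\gamma$-embeddings $\iota$ and $\iota'$, which are Kalton--Weis-style embeddings of homogeneous Bessel potential spaces into $\gamma$-spaces; these are the technical heart of the proof and are proved using the type and cotype inequalities applied to Gaussian sums after a Littlewood--Paley decomposition, together with a Hardy--Littlewood--Sobolev-type estimate. The strict gaps $p < p_0$ and $q > q_0$ supply precisely the fractional smoothness surplus needed for $\gamma$-radonification. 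At the endpoints $p_0 = 2$ or $q_0 = 2$ one has $\delta = 0$ or $\delta' = 0$, and the corresponding embedding reduces to the classical $\Elltwo(\Rd;X) \hookrightarrow \gamma(\Rd;X)$ for type-$2$ $X$ (respectively its dual for cotype-$2$ $Y$), which is exactly why $p = 2$ (respectively $q = 2$) becomes admissible in those cases.

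Finally, the passage from $\Sw(\Rd;X)$ to all of $\Ellp(\Rd;X)$, together with the quantitative bound in terms of $\gamma(\{\abs{\xi}^{d/r} m(\xi) \mid \xi\in \Rd\setminus\{0\}\})$, follows from density combined with Lemma~\ref{approximation of multipliers} applied to truncations of $m$ that cut off neighborhoods of $0$ and $\infty$ and then pass to the limit using the Kahane contraction principle \eqref{Kahane contraction principle} to control the $\gamma$-bounds uniformly.
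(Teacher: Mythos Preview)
Your proposal is correct and follows essentially the same route as the paper's proof. Both arguments factor $T_m$ through $\gamma(\Rd;X)$ and $\gamma(\Rd;Y)$ via the homogeneous Bessel--$\gamma$ embeddings $\dot{\Hr}^{d/p-d/2}_p(\Rd;X)\hookrightarrow\gamma(\Rd;X)$ and $\gamma(\Rd;Y)\hookrightarrow\dot{\Hr}^{d/q-d/2}_q(\Rd;Y)$ (your maps $\iota$ and $\iota'$, with your exponents $\delta=d/p-d/2$ and $\delta'=d/2-d/q$), and then apply the Kalton--Weis $\gamma$-multiplier theorem to the $\gamma$-bounded symbol $N(\xi)=\abs{\xi}^{d/r}m(\xi)$ together with the fact that the Fourier transform is an isometry on $\gamma$-spaces; the endpoint cases $p_0=2$ and $q_0=2$ are handled identically. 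The only minor difference is that your final truncation step invoking Lemma~\ref{approximation of multipliers} is not needed in the paper's presentation: since $\|m(\xi)\|\leq C\abs{\xi}^{-d/r}$ with $r>1$, the multiplier is already of moderate growth and $T_m f$ is well-defined for $f\in\Sw(\Rd;X)$, so one simply establishes the estimate on Schwartz functions and extends by density.
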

It is unknown whether Theorem \ref{thm:sharpintegrability} holds with $p = p_0$ and $q = q_0$ (see Problem \ref{prob:pqequaltop0q0} below).

\begin{proof}
We will prove the result under the condition:
\begin{equation}\label{eq:embeddingcond}
\dot{\Hr}^{\frac{d}{p} - \frac{d}{2}}_{p}(\R^d;X) \hookrightarrow \gamma(\R^d;X) \ \ \text{and} \ \ \gamma(\R^d;Y) \hookrightarrow \dot{\Hr}^{\frac{d}{q} - \frac{d}{2}}_{q}(\R^d;Y).
\end{equation}
Here $\gamma(\Rd;X)$ is the $X$-valued $\gamma$-space (for more on these spaces see \cite{vanNeerven10}). Note that the assumptions imply \eqref{eq:embeddingcond}. Indeed, this follows from the homogeneous versions of \cite[Proposition 3.5]{Veraar13} and of \cite[Theorem 1.1]{KaNeVeWe08} (proved in exactly the same way, here we use the assumption that $X$ has type $p_{0}$ and $p<p_0$).
Moreover, if $p_0=2$, then $\dot{\Hr}^{0}_2(\R^d;X) = \Ell^2(\R^d;X)\hookrightarrow \gamma(\R^d;X)$ (see \cite[Theorem 11.6]{vanNeerven10}), hence in this case one can in fact take $p=2$.
The embedding for $Y$ follows in a similar way.

Let $m_1(\xi) := |\xi|^{\frac{d}{2} - \frac{d}{p}}$ and $m_2(\xi) := |\xi|^{\frac{d}{r}} m(\xi) m_1(\xi)$ for $\xi\in\Rd$. Let $f\in \Sw(\Rd;X)$. It follows from \eqref{eq:embeddingcond} that
\begin{align*}
\|T_{m}(f)\|_{\Ell^q(\Rd;Y)} & = \|T_{m_2} (f)\|_{\dot{\Hr}^{\frac{d}{q} - \frac{d}{2}}_q(\Rd;Y)}
\\ & \leq C\|T_{m_2}(f)\|_{\gamma(\Rd;Y)}\leq C_{1}\|m_2\widehat{f}\|_{\gamma(\Rd;Y)}\\
&\leq C\gamma(\{|\xi|^{\frac{d}{r}} m(\xi)\mid \xi\in\Rd\setminus\{0\}\})\|m_1 \widehat{f}\|_{\gamma(\Rd;X)}\\
&\leq  C\gamma(\{|\xi|^{\frac{d}{r}}m(\xi)\mid \xi\in\Rd\setminus\{0\}\})\|T_{m_1} f\|_{\gamma(\Rd;X)}\\
&\leq  C\gamma(\{|\xi|^{\frac{d}{r}} m(\xi)\mid \xi\in\Rd\setminus\{0\}\})\|T_{m_1} f\|_{\dot{\Hr}^{\frac{d}{p} - \frac{d}{2}}_p(\Rd;X)}\\
& =  C\gamma(\{|\xi|^{\frac{d}{r}}m(\xi)\mid \xi\in\Rd\setminus\{0\}\})\|f\|_{\Ellp(\Rd;X)},
\end{align*}
where we have used $\|f\|_{\gamma(\Rd;X)}=\|\widehat{f}\|_{\gamma(\Rd;X)}$ (see \cite{HyNeVeWe2}), the $\gamma$-multiplier Theorem (see \cite[Proposition 4.11]{Kalton-Weis04} and \cite[Theorem 5.2]{vanNeerven10}) and the fact that $\gamma(\Rd;X)=\gamma_{\infty}(\Rd;Y)$ because $Y$ does not contain a copy of $\textrm{c}_{0}$ (see \cite[Theorem 4.3]{vanNeerven10}). Since $\Sw(\Rd;X)\subseteq\Ellp(\Rd;X)$ is dense, this concludes the proof.
\end{proof}

In Theorem \ref{multiplier Bessel spaces} we provide conditions under which one can take $p = p_0$ and $q = q_0$.
The general case we state as an open problem:

\begin{problem}\label{prob:pqequaltop0q0}
Let $1\leq p\leq 2\leq q\leq \infty$ and $r\in(1,\infty]$ be such that $\tfrac{1}{r}=\tfrac{1}{p}-\tfrac{1}{q}$. Classify those Banach spaces $X$ and $Y$ for which $T_{m}\in\La(\Ellp(\Rd;X),\Ellq(\Rd;Y))$ for all $X$-strongly measurable maps $m:\Rd\to\La(X,Y)$ such that $\{|\xi|^{d/r} m(\xi):\xi\in \R^d\setminus\{0\}\}$ is $\gamma$-bounded.
\end{problem}

The same problem can be formulated in case $m$ is scalar-valued, in which case the $\gamma$-boundedness reduces to uniform boundedness.

\begin{remark}\label{property alpha}
Assume $X$ and $Y$ have property $(\alpha)$ as introduced in \cite{Pisier78}. (This implies that $X$ has finite cotype, and if $X$ and $Y$ are Banach lattices then property $(\alpha)$ is in fact equivalent to finite cotype.) In the multiplier theorems in this paper where $\gamma$-boundedness is an assumption, one can deduce a certain $\gamma$-boundedness result for the Fourier multiplier operators as well. Indeed, assume for example the conditions of Theorem \ref{thm:sharpintegrability}. Let $\{m_j:\R^d\setminus\{0\}\to \calL(X,Y)\mid j\in \mathcal{J}\}$ be a set of $X$-strongly measurable mappings for which there exists a constant $C\geq 0$ such that for each $j\in \mathcal{J}$,
$\{\abs{\xi}^{\frac{d}{r}}m_j(\xi)\mid \xi\in\Rd\}\subseteq\La(X,Y)$ is $\gamma$-bounded by $C$. Note that, since $X$ and $Y$ have finite cotype, $\gamma$-boundedness and $R$-boundedness are equivalent.
Now we claim that $\{\widetilde{T_{m_j}}\mid j\in \mathcal{J}\}\subseteq \La(\Ellp(\Rd;X),\Ellq(\Rd;Y))$ is $\gamma$-bounded as well. To prove this claim one can use the method of \cite[Theorem 3.2]{Girardi-Weis03b}. Indeed, using their notation, it follows from the Kahane-Khintchine inequalities that $\Rad(X)$ has the same type as $X$ and $\Rad(Y)$ has the same cotype as $Y$.  Therefore, given $j_1, \ldots, j_n\in \mathcal{J}$ and the corresponding $m_{j_1}, \ldots, m_{j_n}$, one can apply Theorem \ref{thm:sharpintegrability} to the multiplier $M:\R^d\setminus\{0\}\to \calL(\Rad(X),\Rad(Y))$ given as the diagonal operator with diagonal $(m_{j_1}, \ldots, m_{j_n})$. In order to check the $\gamma$-boundedness one now applies property $(\alpha)$ as in \cite[Estimate (3.2)]{Girardi-Weis03b}.
\end{remark}

\subsection{Convexity, concavity and $\Ellp$-$\Ellq$ results in lattices\label{subsec:convexconcave}}

In this section we will prove certain sharp results in $p$-convex and $q$-concave  Banach lattices.

First of all, from the proof of Theorem \ref{thm:sharpintegrability} we obtain the following result with the sharp exponents $p$ and $q$.

\begin{theorem}\label{multiplier Bessel spaces}
Let $p\in[1,2]$, $q\in[2,\infty)$, and let $r\in[1,\infty]$ be such that $\tfrac{1}{r}=\tfrac{1}{p}-\tfrac{1}{q}$. Let $X$ be a complemented subspace of a $p$-convex Banach lattice with finite cotype and $Y$ a Banach space that is continuously embedded in a $q$-concave Banach lattice. Let $m:\Rd\to\La(X,Y)$ be an $X$-strongly measurable map such that
$\{\abs{\xi}^{\frac{d}{r}}m(\xi)\mid \xi\in\Rd\setminus\{0\}\}\subseteq\La(X,Y)$ is $\gamma$-bounded. Then $T_{m}$ extends uniquely to a bounded map $\widetilde{T_{m}}\in\La(\Ellp(\Rd;X),\Ellq(\Rd;Y))$ with
\begin{align}\label{multiplier Bessel spaces norm bound}
\|\widetilde{T_{m}}\|_{\La(\Ellp(\Rd;X),\Ellq(\Rd;Y))}\leq C \gamma(\{\abs{\xi}^{\frac{d}{r}}m(\xi)\mid \xi\in\Rd\setminus\{0\}\}),
\end{align}
where $C$ is a constant depending on $X$, $Y$, $p$, $q$ and $d$.
\end{theorem}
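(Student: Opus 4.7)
The plan is to rerun the proof of Theorem \ref{thm:sharpintegrability} essentially verbatim, but with the embedding \eqref{eq:embeddingcond} upgraded to its endpoint version
\begin{equation*}
\dot{\Hr}^{d/p-d/2}_p(\Rd;X)\hookrightarrow \gamma(\Rd;X), \qquad \gamma(\Rd;Y)\hookrightarrow \dot{\Hr}^{d/q-d/2}_q(\Rd;Y),
\end{equation*}
now with $p_0=p$ and $q_0=q$. Once these are available, the chain of inequalities used in Theorem \ref{thm:sharpintegrability} carries over with only cosmetic changes and yields \eqref{multiplier Bessel spaces norm bound}.

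The main work therefore lies in establishing these sharp endpoint embeddings. I would first treat the case that $X$ is itself a $p$-convex Banach lattice with finite cotype. Since $X$ has finite cotype, the Gaussian norm is equivalent to the lattice square function,
\begin{equation*}
\|f\|_{\gamma(\Rd;X)}\sim \Big\|\Big(\int_{\Rd}\abs{f(t)}^{2}\,\ud t\Big)^{1/2}\Big\|_{X},
\end{equation*}
by Khintchine--Maurey, so the first embedding reduces to controlling $T_{|\cdot|^{d/2-d/p}}f$ in the lattice-valued square function norm by the $\Ellp(\Rd;X)$-norm of $f$. The $p$-convexity of $X$ combines with Lemma \ref{lem:relation different p-norms} to pass between $\Ellp(\Rd;X)$ and $X(\Ellp(\Rd))$, and then the remaining statement is a lattice-valued analogue of the scalar Hardy--Littlewood--Sobolev embedding $\dot{\Hr}^{d/p-d/2}_p(\Rd)\hookrightarrow \Ell^{2}(\Rd)$. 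This I would obtain via a Littlewood--Paley decomposition inside the lattice $X$, using Lemma \ref{lem:positive operator through integral} to move positive kernels inside the square function and the finite cotype of $X$ to sum the dyadic pieces. The dual embedding for $q$-concave $Y$ is proved symmetrically, using Lemma \ref{lem:relation different p-norms} in the $q$-concave direction.

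Next I would lift the embeddings to the actual hypotheses of the theorem. If $X$ is complemented in a $p$-convex lattice $Z$ with finite cotype via a projection $P:Z\to X$, the embedding for $Z$ restricts to $X$: both $\gamma(\Rd;\cdot)$ and $\dot{\Hr}^{d/p-d/2}_p(\Rd;\cdot)$ are functorial under bounded linear maps, so the diagram $X\hookrightarrow Z\to X$ (the second arrow being $P$) transports the estimate with an extra factor $\|P\|$. Similarly, if $Y$ continuously embeds into a $q$-concave lattice $W$, one first obtains $\gamma(\Rd;Y)\hookrightarrow\gamma(\Rd;W)\hookrightarrow \dot{\Hr}^{d/q-d/2}_q(\Rd;W)$, and then notices that a $Y$-valued tempered distribution lying in $\dot{\Hr}^{d/q-d/2}_q(\Rd;W)$ automatically lies in $\dot{\Hr}^{d/q-d/2}_q(\Rd;Y)$ with comparable norm.

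With the endpoint embeddings in hand, the conclusion is immediate. Setting $m_1(\xi):=\abs{\xi}^{d/2-d/p}$ and $m_2(\xi):=\abs{\xi}^{d/r}m(\xi)m_1(\xi)$, for $f\in\Sw(\Rd;X)$ one applies the embedding for $Y$, the $\gamma$-multiplier theorem (which contributes the factor $\gamma(\{\abs{\xi}^{d/r}m(\xi)\})$), and the embedding for $X$ to arrive at
\begin{equation*}
\|T_m f\|_{\Ellq(\Rd;Y)}\leq C\,\gamma\bigl(\{\abs{\xi}^{d/r}m(\xi)\mid \xi\in\Rd\setminus\{0\}\}\bigr)\,\|f\|_{\Ellp(\Rd;X)},
\end{equation*}
exactly as in the proof of Theorem \ref{thm:sharpintegrability}. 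Density of $\Sw(\Rd;X)$ in $\Ellp(\Rd;X)$ completes the argument. The principal obstacle is the sharp endpoint embedding step; the rest is a direct rerun of the earlier proof.
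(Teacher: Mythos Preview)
Your overall strategy matches the paper's: for the pure lattice case, establish the endpoint embeddings
\[
\dot{\Hr}^{d/p-d/2}_p(\Rd;X)\hookrightarrow\gamma(\Rd;X),\qquad \gamma(\Rd;Y)\hookrightarrow\dot{\Hr}^{d/q-d/2}_q(\Rd;Y)
\]
(the paper simply cites the homogeneous analogue of \cite[Theorem~3.9]{Veraar13}, which is essentially the Khintchine--Maurey/convexity argument you sketch), and then rerun the chain of inequalities from Theorem~\ref{thm:sharpintegrability}. That part is fine.

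The divergence is in the reduction from the general hypotheses, and there your $Y$-side step contains a real gap. The claim that a $Y$-valued distribution lying in $\dot{\Hr}^{d/q-d/2}_q(\Rd;W)$ ``automatically lies in $\dot{\Hr}^{d/q-d/2}_q(\Rd;Y)$ with comparable norm'' is false in general: a continuous embedding $\iota:Y\hookrightarrow W$ gives only $\|y\|_W\leq\|\iota\|\,\|y\|_Y$, never the reverse inequality, so $\|g\|_{\Ell^q(\Rd;W)}$ does not control $\|g\|_{\Ell^q(\Rd;Y)}$ (take for instance $Y=\ell^1\hookrightarrow\ell^2=W$). Your $X$-side functoriality argument works precisely because the projection $P$ provides a bounded map \emph{back} to $X$; on the $Y$-side no such map exists, so the argument does not dualize.

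The paper sidesteps this by not attempting to transport the embeddings at all. Instead it modifies the multiplier: with $P:X_0\to X$ the projection and $\iota:Y\hookrightarrow Y_0$ the embedding, it sets $m_0(\xi):=\iota\circ m(\xi)\circ P\in\La(X_0,Y_0)$, checks that
\[
\gamma(\{|\xi|^{d/r}m_0(\xi)\mid\xi\neq 0\})\leq\|\iota\|\,\|P\|\,\gamma(\{|\xi|^{d/r}m(\xi)\mid\xi\neq 0\}),
\]
applies the lattice case directly to $T_{m_0}:\Ell^p(\Rd;X_0)\to\Ell^q(\Rd;Y_0)$, and then observes that $T_m=\widetilde{T_{m_0}}\restriction_{\Sw(\Rd;X)}$. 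This avoids ever needing to pull an estimate from the $W$-norm back to the $Y$-norm.
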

\begin{proof}
In the case where $X$ is a $p$-convex and $Y$ is a $q$-concave Banach lattice, the embeddings in \eqref{eq:embeddingcond} can be proved in the same way as in \cite[Theorem 3.9]{Veraar13}, where the inhomogeneous case was considered. Therefore, the result in this case follows from the proof of Theorem \ref{thm:sharpintegrability}.

Now let $X_{0}$ be a $p$-convex Banach lattice with finite cotype such that $X\subseteq X_{0}$, let $P\in\La(X_{0})$ be a projection with range $X$ and let $Y_{0}$ be a $q$-concave Banach lattice with a continuous embedding $\iota:Y\hookrightarrow Y_{0}$. Let $m_{0}:\Rd\to\La(X_{0},Y_{0})$ be given by $m_{0}(\xi):=\iota\circ m(\xi)\circ P\in\La(X_{0},Y_{0})$ for $\xi\in\Rd$. It is easily checked that $\{m_{0}(\xi)\mid \xi\in\Rd\}\subseteq\La(X_{0},Y_{0})$ is $\gamma$-bounded, with
\begin{align}\label{gamma-bound bigger spaces}
\gamma(\{m_{0}(\xi)\mid \xi\in\Rd\setminus\{0\}\})\leq \|\iota\|_{\La(Y,Y_{0})}\|P\|_{\La(X_{0})} \gamma(\{\abs{\xi}^{\frac{d}{r}}m(\xi)\mid \xi\in\Rd\setminus\{0\}\}).
\end{align}
As we have shown above, there exists a constant $C\in(0,\infty)$ that depends only on $X_{0}$, $Y_{0}$, $p$, $q$ and $d$ such that $T_{m_{0}}$ extends uniquely to a bounded operator $\widetilde{T_{m_{0}}}\in\La(\Ell^p(\Rd;X_{0}),\Ell(\Rd;Y_{0}))$ with
\begin{align}\label{multiplier on bigger spaces}
\norm{T_{m_0}}_{\La(\Ell^p(\Rd;X_{0}),\Ell^q(\Rd;Y_{0}))}\leq C \gamma(\{m_{0}(\xi)\mid \xi\in\Rd\}).
\end{align}
Since $T_{m}=\widetilde{T_{m_{0}}}\!\restriction_{\Sw(\R;X)}$, the result follows from \eqref{gamma-bound bigger spaces} and \eqref{multiplier on bigger spaces}.
\end{proof}

\begin{remark}\label{dependence constant}
Note from \eqref{gamma-bound bigger spaces} and \eqref{multiplier on bigger spaces} that the constant $C$ in \eqref{multiplier Bessel spaces norm bound}
depends on $X$ and $Y$ as $C=\norm{P}_{\La(X_{0})}\norm{\iota}_{\La(Y,Y_{0})}C_{1}$, where $P\in\La(X_{0})$ is a projection with range $X$ on a $p$-convex Banach lattice $X_{0}$ with finite cotype, $\iota\in\La(Y,Y_{0})$ is a continuous embedding of $Y$ in a $q$-concave Banach lattice $Y_{0}$ and $C_{1}$ is a constant that depends only on $X_{0}$, $Y_{0}$, $p$, $q$ and $d$.
\end{remark}

\begin{remark}
By using Theorems \ref{thm:sharpintegrability} and \ref{multiplier Bessel spaces} and by multiplying in the Fourier domain by appropriate powers of $|\xi|$, versions of these theorems for multipliers from $\dot{\Hr}^{\alpha}_p(\R^d;X)$ to $\dot{\Hr}^{\beta}_q(\R^d;Y)$ can be derived. Similar results can be derived for the inhomogeneous spaces as well.
\end{remark}

So far, in all our results about $(\Ellp,\Ellq)$-multipliers the indices $p$ and $q$ have been restricted to the range $p\leq 2\leq q$, which is necessary when considering general multipliers (see \eqref{weakLrcondition}). However, we have also seen in Example \ref{ex:HLS} that for the scalar multiplier $m(\xi)=\abs{\xi}^{-s}$ such a restriction is not necessary, as follows from Proposition \ref{prop:positivekern} since the kernel associated with $m$ is positive. We now show that also for operator-valued multipliers with positive kernels on $p$-convex and $q$-concave Banach lattices, the restriction $p\leq 2\leq q$ is not necessary and moreover $\gamma$-boundedness can be avoided. First we state the result for multipliers between Bessel spaces.

\begin{theorem}\label{thm:positive kernel multipliers}
Let $p,q\in[1,\infty)$ with $p\leq q$, and let $r\in(1,\infty]$ be such that $\tfrac{1}{r}=\tfrac{1}{p}-\tfrac{1}{q}$. Let $X$ be a $p$-convex Banach lattice with finite cotype, and let $Y$ be a $q$-concave Banach lattice. Suppose that $K:\Rd\to\La(X,Y)$ is such that $K(\cdot)x\in\Ell^{1}(\Rd;Y)$ for all $x\in X$, $K(s)$ is a positive operator for all $s\in\Rd$, and $m:\R^d\to \calL(X,Y)$ is such that $\F(Kx)=mx$ for all $x\in X$. Then $T_{m}\in\La(\dot{\Hr}_{p}^{d/r}(\Rd;X),\Ellq(\Rd;Y))$ and
\begin{align}\label{eq:Hrspacepositive}
\|T_{m}\|_{\La(\dot{\Hr}_{p}^{d/r}(\Rd;X),\Ellq(\Rd;Y))}\leq C  \|m(0)\|_{\La(X,Y)}\leq C\sup_{\xi\in\Rd\setminus\{0\}} \|m(\xi)\|_{\La(X,Y)}
\end{align}
for some $C\geq 0$ independent of $K$.
\end{theorem}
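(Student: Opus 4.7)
The plan is to exploit the operator positivity of the kernel $K(s)$ to reduce the problem to pointwise lattice estimates, and then to combine the $q$-concavity of $Y$, the $p$-convexity of $X$ (via Lemma \ref{lem:relation different p-norms}), Lemma \ref{lem:positive operator through integral}, and a pointwise scalar Hardy--Littlewood--Sobolev inequality. For $f\in\Sw(\Rd;X)$ (dense in $\dot{\Hr}_p^{d/r}(\Rd;X)$ since $p<\infty$), $T_m f = K*f$ is a Bochner convolution. Since each $K(s)\in\La(X,Y)$ is positive, the lattice inequality $\abs{K(s)x}_Y \leq K(s)\abs{x}_X$ together with Bochner integration yields the pointwise domination $\abs{T_m f(t)}_Y \leq (K*h)(t)$ in $Y$, where $h(t) := \abs{f(t)}_X \in X_+$. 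Consequently $\|T_m f\|_{\Ellq(\Rd;Y)} \leq \|K*h\|_{\Ellq(\Rd;Y)}$, so I may work with the non-negative $X$-valued input $h$ in the sequel.

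By the $q$-concavity of $Y$ (Lemma \ref{lem:relation different p-norms}),
\[
\|K*h\|_{\Ellq(\Rd;Y)} \leq C\Big\|\Big(\int_{\Rd} (K*h)(t)^q\,\ud t\Big)^{1/q}\Big\|_Y.
\]
Combining Minkowski's integral inequality, applied in the function-space representation of $Y$ so that the inner $u$-integral can be moved outside the lattice-valued $\Ellq_t$-norm, with Lemma \ref{lem:positive operator through integral} for the positive operator $K(u)$ at each $u$ (with exponent $q$), one obtains the pointwise inequality in $Y$
\[
\Big(\int_{\Rd} (K*h)(t)^q\,\ud t\Big)^{1/q} \leq \int_{\Rd} K(u) H\,\ud u = m(0) H,
\]
where $H := \big(\int_{\Rd} h(t)^q\,\ud t\big)^{1/q}\in X$ is independent of $u$ by translation invariance. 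Taking $Y$-norms and using $\|m(0)H\|_Y \leq \|m(0)\|_{\La(X,Y)}\|H\|_X$ then gives $\|K*h\|_{\Ellq(\Rd;Y)} \leq C\|m(0)\|_{\La(X,Y)}\|f\|_{X(\Ellq(\Rd))}$.

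To close the argument I reduce the mixed norm $\|f\|_{X(\Ellq(\Rd))}$ to the Bessel norm via a pointwise mixed-norm Sobolev embedding. For each $\omega_X\in\Omega_X$, the scalar Hardy--Littlewood--Sobolev inequality (valid for $1<p<q<\infty$ with $\tfrac{1}{p}-\tfrac{1}{q}=\tfrac{d/r}{d}$) gives $\|f(\cdot,\omega_X)\|_{\Ellq(\Rd)} \leq C \||D|^{d/r} f(\cdot,\omega_X)\|_{\Ellp(\Rd)}$; lattice monotonicity in $X$ yields $\|f\|_{X(\Ellq(\Rd))}\leq C \||D|^{d/r} f\|_{X(\Ellp(\Rd))}$, and a second application of Lemma \ref{lem:relation different p-norms} (this time for the $p$-convexity of $X$) bounds the right-hand side by $C\||D|^{d/r} f\|_{\Ellp(\Rd;X)}=C\|f\|_{\dot{\Hr}_p^{d/r}(\Rd;X)}$. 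Chaining the estimates and extending by density gives the first inequality in \eqref{eq:Hrspacepositive}; the second is immediate from the continuity of $m$ (a consequence of $K(\cdot)x\in\Ellone(\Rd;Y)$) and the positivity bound $\|m(\xi)\|_{\La(X,Y)}\leq\|m(0)\|_{\La(X,Y)}$. The main technical obstacle is justifying the Minkowski interchange in the lattice-valued setting, which requires a careful use of the function-space representation of $Y$ and checking Bochner/Pettis integrability of the quantities involved; the case $p=q$ (so $r=\infty$) is automatic since the Sobolev step trivialises, while the endpoint $p=1<q$ is outside the scope of the scalar Hardy--Littlewood--Sobolev estimate and would need separate treatment.
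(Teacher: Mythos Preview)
Your proposal is correct and follows essentially the same route as the paper: both arguments pass from $\Ellq(\Rd;Y)$ to $Y(\Ellq(\Rd))$ via $q$-concavity (Lemma~\ref{lem:relation different p-norms}), apply Minkowski's inequality and the positive-operator Lemma~\ref{lem:positive operator through integral} to extract $m(0)$ acting on $\big(\int_{\Rd}|f(t)|^q\,\ud t\big)^{1/q}$, then use the scalar Sobolev embedding $\dot{\Hr}^{d/r}_p(\Rd)\hookrightarrow\Ellq(\Rd)$ pointwise in the lattice variable and close with the $p$-convexity of $X$. The only organisational differences are that the paper works directly with $f$ rather than first dominating by $K*|f|$, takes test functions in $\dot{\Sw}(\Rd)\otimes X$ (so that the tensor-product form of Lemmas~\ref{lem:relation different p-norms} and~\ref{lem:positive operator through integral} applies without further comment), and spells out explicitly the reduction of $X$ and $Y$ to Banach function spaces via order continuity, separability, and weak order units---precisely the ``technical obstacle'' you flag.
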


By further approximation arguments one can often avoid the assumptions that $K(\cdot)x\in\Ell^{1}(\Rd;Y)$ for all $x\in X$. It follows from \cite{Rozendaal-Veraar17Stability} that the bound in Theorem \ref{thm:positive kernel multipliers} is optimal in a certain sense.

\begin{proof}
The second estimate in \eqref{eq:Hrspacepositive} follows from the continuity of $mx = \F(Kx)$. Since $\dot{\Sw}(\Rd)\otimes X$ is dense in $\dot{\Hr}^{d/r}(\Rd;X)$, for the first estimate in \eqref{eq:Hrspacepositive} it suffices to fix an $f\in \dot{\Sw}(\Rd)\otimes X$ and to show that $T_{m}(f)\in\Ellq(\Rd;X)$ with
\begin{align}\label{estimate single f}
\|T_{m}(f)\|_{\Ellq(\Rd;Y)}\leq C\|m(0)\|\, \|f\|_{\dot{\Hr}^{d/r}_{p}(\Rd;X)}.
\end{align}
Since $X$ has finite cotype, it does not contain a copy of $\ce_{0}$. Hence, by \cite[Theorem 1.a.5 and Proposition 1.a.7]{Lindenstrauss-Tzafriri79}, $X$ is order continuous. Moreover, the range of $f$ is contained in a separable subspace $X_{0}$ of $X$. By \cite[Proposition 1.a.9]{Lindenstrauss-Tzafriri79}, $X_{0}$ has a weak order unit. Now \cite[Theorem 1.b.14]{Lindenstrauss-Tzafriri79} implies that $X_{0}$ is order isometric to a Banach function space. Similarly, $Y$ is order continuous, and the range of $T_{m}(f)$ is contained in a separable subspace $Y_{0}$ which is order isometric to a Banach function space. So henceforth we may assume without loss of generality that $X$ and $Y$ are Banach function spaces.

It follows by approximation
from Lemma \ref{lem:relation different p-norms} that
\begin{align*}
\|K\ast f\|_{\Ellq(\Rd;Y)}&\leq C_{1}\|K\ast f\|_{Y(\Ellq(\Rd))}=C\Big\|\Big(\int_{\Rd}\abs{K\ast f(t)}^{q}\,\ud t\Big)^{1/q}\Big\|_{Y}\\
&=C\Big\|\Big(\int_{\Rd}\Big|\int_{\Rd}K(s)f(t-s)\,\ud s\Big|^{q}\,\ud t\Big)^{1/q}\Big\|_{Y}\\
&\leq C\Big\|\int_{\Rd}\Big(\int_{\Rd}\abs{K(s)f(t-s)}^{q}\,\ud t\Big)^{1/q}\,\ud s\Big\|_{Y}
\end{align*}
for some constant $C\geq 0$, where we used Minkowski's integral inequality in the final step. Lemma \ref{lem:positive operator through integral}, applied to the positive operator $K(s)\in\La(X,Y)$ and the function $f(\cdot-s)\in\Ellp(\Rd)\otimes X$ for each $s\in\Rd$, yields
\begin{align*}
\int_{\Rd}\Big(\int_{\Rd}\abs{K(s)f(t-s)}^{q}\,\ud t\Big)^{1/q}\,\ud s&\leq \int_{\Rd}K(s)\Big(\int_{\Rd}\abs{f(t-s)}^{q}\,\ud t\Big)^{1/q}\,\ud s\\
&=\int_{\Rd}K(s)\Big(\int_{\Rd}\abs{f(t)}^{q}\,\ud t\Big)^{1/q}\,\ud s=m(0)x_{0},
\end{align*}
where $x_{0}:=\Big(\int_{\Rd}\abs{f(t)}^{q}\,\ud t\Big)^{1/q}\in X$. Therefore,
\begin{align*}
\Big\|\int_{\Rd}\Big(\int_{\Rd}\abs{K(s)f(t-s)}^{q}\,\ud t\Big)^{1/q}\,\ud s\Big\|_{Y}&\leq \|m(0)\|\, \Big\|\Big(\int_{\Rd}\abs{f(t)}^{q}\,\ud t\Big)^{1/q}\Big\|_{X}.
\end{align*}
The Sobolev embedding $\dot{\Hr}^{d/r}_{p}(\Rd)\hookrightarrow \Ellq(\Rd)$ yields
\begin{align*}
\Big\|\Big(\int_{\Rd}\abs{f(t)}^{q}\,\ud t\Big)^{1/q}\Big\|_{X}=\|\|f(\cdot)\|_{\Ellq(\Rd)}\|_{X}\leq  C\|\|f(\cdot)\|_{\dot{\Hr}^{d/r}_{p}(\Rd)}\|_{X}.
\end{align*}
Finally, Lemma \ref{lem:relation different p-norms} yields that, for $n(\xi):=\abs{\xi}^{d/r}\I_{X}\in\La(X)$,
\begin{align*}
\|\|f(\cdot)\|_{\dot{\Hr}^{d/r}_{p}(\Rd)}\|_{X}&=\|\|T_{n}(f)(\cdot)\|_{\Ellp(\Rd)}\|_{X}\leq C\|T_{n}(f)\|_{\Ellp(\Rd;X)}=C\|f\|_{\dot{\Hr}^{d/r}_{p}(\Rd;X)}.
\end{align*}
Combining all these estimates yields \eqref{estimate single f} and concludes the proof.
\end{proof}

In terms of $\Ellp$-$\Ellq$-multipliers we obtain the following result. Note that below we require that the kernel associated with the multiplicative perturbation $\abs{\xi}^{d/r}m(\xi)$ of $m$ is positive, unlike in Proposition \ref{prop:positivekern} where this positivity was required of the kernel associated with $m$.

\begin{corollary}\label{cor:positive kernel Lp-Lq multipliers}
Let $p,q\in[1,\infty)$ with $p\leq q$, and let $r\in(1,\infty]$ be such that $\tfrac{1}{r}=\tfrac{1}{p}-\tfrac{1}{q}$. Let $X$ be a $p$-convex Banach lattice with finite cotype, and let $Y$ be a $q$-concave Banach lattice. Suppose that $K:\Rd\to\La(X,Y)$ is such that $K(\cdot)x\in\Ell^{1}(\Rd;Y)$ for all $x\in X$, $K(s)$ is a positive operator for all $s\in\Rd$,
and $m:\R^d\setminus\{0\}\to \calL(X,Y)$ is such that $\F(Kx)(\cdot)=\abs{\cdot}^{d/r}m(\cdot)x$ for all $x\in X$.
Then $T_{m}$ extends uniquely to a bounded map $\widetilde{T}_{m}\in\La(\Ellp(\Rd;X),\Ellq(\Rd;Y))$ with
\begin{align*}
\|\widetilde{T}_{m}\|_{\La(\Ellp(\Rd;X),\Ellq(\Rd;Y))}\leq C\sup_{\xi\in\Rd\setminus\{0\}}\abs{\xi}^{d/r}\|m(\xi)\|_{\La(X,Y)}
\end{align*}
for some $C\geq 0$ independent of $m$.
\end{corollary}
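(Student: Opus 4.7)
The plan is to deduce the corollary directly from Theorem \ref{thm:positive kernel multipliers} via the Riesz-potential identification between $L^{p}$ and the homogeneous Bessel space $\dot{\Hr}^{d/r}_{p}$. Set $M(\xi):=|\xi|^{d/r}m(\xi)$ for $\xi\in\R^{d}\setminus\{0\}$, so that the hypothesis reads $\F(Kx)(\xi)=M(\xi)x$ for all $x\in X$ and $\xi\neq 0$. Since $Kx\in L^{1}(\R^{d};Y)$, the function $\xi\mapsto\F(Kx)(\xi)$ is continuous on $\R^{d}$, hence $M$ extends to a strongly continuous map $M:\R^{d}\to\La(X,Y)$ with $M(0)x=\int_{\R^{d}}K(s)x\,\ud s$. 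Because $M$ is bounded on $\R^{d}\setminus\{0\}$ (this boundedness is precisely the finiteness of the right-hand side of the asserted estimate), the uniform boundedness principle yields
\begin{equation*}
\|M(0)\|_{\La(X,Y)}\leq\sup_{\xi\neq 0}|\xi|^{d/r}\|m(\xi)\|_{\La(X,Y)}=:C_{m}.
\end{equation*}

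Next I would apply Theorem \ref{thm:positive kernel multipliers} to the pair $(K,M)$, whose hypotheses (positivity of $K$, integrability of $K(\cdot)x$, and the Fourier relation $\F(Kx)=M(\cdot)x$) are all in place. This delivers $T_{M}\in\La(\dot{\Hr}^{d/r}_{p}(\R^{d};X),L^{q}(\R^{d};Y))$ with $\|T_{M}\|\leq C\|M(0)\|\leq CC_{m}$ for a constant $C\geq 0$ independent of $m$.

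To conclude I would observe that the scalar Fourier multiplier $T_{n}$ with symbol $n(\xi):=|\xi|^{-d/r}$ is, by the very definition of the $\dot{\Hr}^{d/r}_{p}$-norm, an isometric isomorphism $T_{n}:L^{p}(\R^{d};X)\to\dot{\Hr}^{d/r}_{p}(\R^{d};X)$ in the homogeneous setting $\dot{\Sw}'(\R^{d};X)$. A straightforward Fourier-side computation gives the identity $T_{M}\circ T_{n}f=T_{m}f$ for every $f\in\dot{\Sw}(\R^{d})\otimes X$, so the composition $T_{M}\circ T_{n}:L^{p}(\R^{d};X)\to L^{q}(\R^{d};Y)$ provides the bounded extension $\widetilde{T}_{m}$ with the claimed norm bound; uniqueness follows from the density of $\dot{\Sw}(\R^{d})\otimes X$ in $L^{p}(\R^{d};X)$ modulo polynomials (which are quotiented out in $\dot{\Sw}'(\R^{d};X)$). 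The only genuinely delicate point is the continuous extension of $M$ to the origin together with its operator-norm bound via the uniform boundedness principle; everything else is a direct composition of Theorem \ref{thm:positive kernel multipliers} with the homogeneous Bessel-space isometry.
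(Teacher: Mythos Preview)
Your argument is essentially the same as the paper's: apply Theorem~\ref{thm:positive kernel multipliers} to the symbol $M(\xi)=|\xi|^{d/r}m(\xi)$ and pre-compose with the isometric isomorphism $T_{|\cdot|^{-d/r}}:L^{p}(\R^{d};X)\to\dot{\Hr}^{d/r}_{p}(\R^{d};X)$ to obtain $T_{m}=T_{M}\circ T_{|\cdot|^{-d/r}}$. Two minor remarks: the bound $\|M(0)\|\leq\sup_{\xi\neq 0}\|M(\xi)\|$ is already contained in the statement of Theorem~\ref{thm:positive kernel multipliers} (it is the second inequality in \eqref{eq:Hrspacepositive}, via continuity of $\F(Kx)$), so you need not invoke the uniform boundedness principle separately; and the paper adds the observation that $m$ is of moderate growth at infinity (using $r>1$) so that $T_{m}$ is well-defined on $\Sw(\R^{d};X)$, a small point you omit.
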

\begin{proof}
First note that $m$ is of moderate growth at infinity, where we use that $r>1$. Hence $T_{m}:\Sw(\Rd)\otimes X\to\Sw'(\Rd;Y)$ is well-defined. Now the result follows by applying Theorem \ref{thm:positive kernel multipliers} to the symbol $\xi\mapsto\abs{\xi}^{d/r}m(\xi)\in\La(X,Y)$, since $f\mapsto T_{\abs{\xi}^{-d/r}}(f)$ is an isometric isomorphism $\Ellp(\Rd;X)\to\dot{\Hr}^{d/r}_{p}(\Rd;X)$ and $T_{m}(f)=T_{\abs{\xi}^{d/r}m(\xi)}(T_{\abs{\xi}^{-d/r}}(f))$ for $f\in\dot{\Sw}(\Rd;X)$.
\end{proof}

\subsection{Converse results and comparison}

In the next result we show that in certain situations the type $p$ of $X$ (or cotype $q$ of $Y$) is necessary in Theorems \ref{thm:sharpintegrabilityintro}, \ref{thm:sharpintegrability} and \ref{multiplier Bessel spaces}.
The technique is a variation of the argument of Proposition \ref{prop:converseFouriertype3} and in particular Lemma \ref{lem:estimate for sharpness}.

\begin{lemma}\label{lem:estimate for sharpness-type}
Let $X$ be a Banach space with cotype $2$ and let $Y$ be a Banach space with type $2$. Let $p\in (1, 2]$, $q\in [2, \infty)$ and $r\in(1,\infty]$ be such that $\frac1r = \frac1p - \frac1q$. Assume that for all strongly measurable $m:\R^d\to \calL(X,Y)$ for which $\{\abs{\xi}^{\frac{d}{r}}m(\xi)\mid \xi\in\Rd\}$ is $\gamma$-bounded, the operator $T_{m}:\Ellp(\Rd;X)\to \Ellq(\Rd;Y)$ is bounded. Then
\begin{equation}\label{eq:toprovebiltype}
\int_{\R^d} |\xi|^{-\frac{d}{r}} \|\widehat{f}(\xi)\|_X \|\widehat{g}(\xi)\|_{Y^*} \,\ud \xi \leq C\|f\|_{\Ellp(\Rd;X)} \|g\|_{\Ell^{q'}(\Rd;Y^*)}
\end{equation}
for some $C\geq0$ and all $f\in\Sw(\Rd;X)$ and $g\in\Sw(\Rd;Y^{*})$.
\end{lemma}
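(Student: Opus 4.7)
The plan is to mimic the approach of Lemma~\ref{lem:estimate for sharpness}, adjusting for the weight $|\xi|^{-d/r}$ and the fact that the operating hypothesis now involves $\gamma$-boundedness rather than an $\Ellr$-norm on the multiplier. As a first step, I will invoke the closed graph theorem on the vector space of strongly measurable $m:\Rd\to\La(X,Y)$ such that $\gamma(\{|\xi|^{d/r}m(\xi)\mid\xi\in\Rd\})<\infty$, equipped with this $\gamma$-bound as its (semi)norm. The hypothesis yields a uniform constant $C\geq 0$ with
\[
|\langle T_m f,g\rangle|\leq C\,\gamma(\{|\xi|^{d/r}m(\xi)\mid \xi\in\Rd\})\,\|f\|_p\|g\|_{q'}
\]
for every $f\in\Sw(\Rd;X)$ and $g\in\Sw(\Rd;Y^*)$. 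Setting $M(\xi):=|\xi|^{d/r}m(\xi)$ and using the Parseval identity $\langle T_m f,g\rangle=\langle m\widehat f,\widehat g\rangle$ (exactly as in Lemma~\ref{lem:estimate for sharpness}), this recasts as
\[
\Bigl|\int_{\Rd}|\xi|^{-d/r}\langle M(\xi)\widehat f(\xi),\widehat g(\xi)\rangle\,\ud\xi\Bigr|\leq C\,\gamma(\{M(\xi)\mid\xi\in\Rd\})\,\|f\|_p\|g\|_{q'}.
\]

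Next I will invoke the geometric assumptions. Since $X$ has cotype $2$ and $Y$ has type $2$, the Arendt--Bu result (recalled in the discussion following~\eqref{gamma-boundedness}) implies that every uniformly bounded family in $\La(X,Y)$ is $\gamma$-bounded, with $\gamma$-bound dominated by a constant multiple of the supremum norm. Hence the previous estimate sharpens to
\[
\Bigl|\int_{\Rd}|\xi|^{-d/r}\langle M(\xi)\widehat f(\xi),\widehat g(\xi)\rangle\,\ud\xi\Bigr|\leq C'\,\|M\|_{\Ellinfty(\Rd;\La(X,Y))}\,\|f\|_p\|g\|_{q'}
\]
for all strongly measurable $M:\Rd\to\La(X,Y)$ with $\|M\|_\infty<\infty$.

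To extract the factor $\|\widehat f(\xi)\|_X\|\widehat g(\xi)\|_{Y^*}$ I now specialise $M$ along the lines of Lemma~\ref{lem:estimate for sharpness}. Fix $\varepsilon\in(0,1)$ and a large $N$, and on a common partition of the annulus $\{1/N\leq|\xi|\leq N\}$ into disjoint measurable sets $A_1,\ldots,A_n$ of finite measure approximate $\widehat f$ and $\widehat g$ by simple functions $\zeta=\sum_k\ind_{A_k}x_k$ and $\eta=\sum_k\ind_{A_k}y_k^*$. Choose norming elements $x_k^*\in X^*$ and $y_k\in Y$ of unit norm with $\langle x_k,x_k^*\rangle=\|x_k\|$ and $\langle y_k,y_k^*\rangle\geq(1-\varepsilon)\|y_k^*\|$, and set
\[
M(\xi)x:=\sum_{k=1}^n\ind_{A_k}(\xi)\langle x,x_k^*\rangle y_k,
\]
so that $\|M\|_\infty\leq 1$. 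Disjointness of the $A_k$ then yields
\[
\int_{\Rd}|\xi|^{-d/r}\langle M(\xi)\zeta(\xi),\eta(\xi)\rangle\,\ud\xi=\sum_{k=1}^n\|x_k\|\langle y_k,y_k^*\rangle\int_{A_k}|\xi|^{-d/r}\,\ud\xi\geq(1-\varepsilon)\int_{\Rd}|\xi|^{-d/r}\|\zeta(\xi)\|_X\|\eta(\xi)\|_{Y^*}\,\ud\xi.
\]
Combining this with the previous paragraph applied to the present $M$, and then passing from $\zeta,\eta$ back to $\widehat f,\widehat g$ via the reverse triangle inequality, will yield~\eqref{eq:toprovebiltype} upon letting $\varepsilon\to 0$ and $N\to\infty$.

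The main technical obstacle is this final approximation step: I must control $\int_{\Rd}|\xi|^{-d/r}\bigl|\|\widehat f\|_X\|\widehat g\|_{Y^*}-\|\zeta\|_X\|\eta\|_{Y^*}\bigr|\,\ud\xi$. The condition $r>1$ ensures local integrability of $|\xi|^{-d/r}$ at the origin, and the Schwartz decay of $f,g$ gives integrability at infinity, so the target integral converges absolutely; this permits truncation to a bounded annulus and uniform approximation of $\widehat f,\widehat g$ on it by simple functions, with the tails vanishing as $N\to\infty$. The application of the closed graph theorem in the first step is routine and proceeds exactly as in Lemma~\ref{lem:estimate for sharpness}.
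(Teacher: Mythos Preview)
Your proposal is correct and follows essentially the same route as the paper. The only cosmetic difference is that the paper first invokes the type/cotype hypothesis to replace the $\gamma$-bound by the uniform bound $\sup_{\xi}|\xi|^{d/r}\|m(\xi)\|$ and only then applies the closed graph theorem (on the weighted $\Ell^\infty$ space, whose completeness is clear), whereas you apply closed graph directly on the $\gamma$-bound space and switch to the sup norm afterwards; this reversal is harmless since the $\gamma$-bound dominates the uniform bound, so the space is complete either way, and the paper's approximation step is precisely the one you outline (the paper simply says ``a similar approximation argument as in Lemma~\ref{lem:estimate for sharpness}'' without spelling out the annulus truncation).
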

At first glance it might seem surprising that we use that $X$ has cotype $2$ and $Y$ has type $2$. This is to be able to handle the $\gamma$-bound of $\{|\xi|^{\frac{d}{r}} m(\xi)\mid \xi\in \R^d\}$ in a simple way.
\begin{proof}
Since $X$ has cotype $2$ and $Y$ has type $2$, the $\gamma$-boundedness and uniform boundedness of $\{|\xi|^{\frac{d}{r}} m(\xi)\mid\xi\in \R^d\}$ are equivalent. Therefore, by the closed graph theorem there is a constant $C$ such that for all $f\in \Ellp(\Rd;X)$ and $g\in \Ell^{q'}(\Rd;Y^*)$
\[
|\lb T_m f, g\rb| \leq C \sup \{|\xi|^{\frac{d}{r}} \|m(\xi)\|_{\La(X,Y)}\mid \xi\in \R^d\} \|f\|_{\Ellp(\Rd;X)} \|g\|_{\Ell^{q'}(\Rd;Y^*)}.
\]
Hence, letting $M(\xi) := |\xi|^{\frac{d}{r}} m(\xi)$, we see that for all $f\in \Sw(\R^d;X)$ and $g\in \Sw(\R^d;Y^*)$,
\[
\Big|\int_{\R^d} \lb M(\xi) |\xi|^{-\frac{d}{r}} \wh f(\xi),\wh g(\xi)\rb \,\ud \xi\Big| \leq C \sup \{\|M(\xi)\|\mid \xi\in \R^d\} \|f\|_{\Ellp(\Rd;X)} \|g\|_{\Ell^{q'}(\Rd;Y^*)}.
\]
Taking the supremum over all strongly measurable $M$ which are uniformly bounded by $1$, a similar approximation argument as in Lemma \ref{lem:estimate for sharpness} yields the desired result.
\end{proof}

\begin{proposition}\label{prop:conversetype}
Let $X$ and $Y$ be Banach spaces. Let $p\in (1, 2]$, $q\in [2, \infty)$ and $r\in (1, \infty]$ be such that $\frac1r = \frac1p - \frac1q$. Assume that for all $X$-strongly measurable $m:\R^d\to \calL(X,Y)$ such that $\{\abs{\xi}^{\frac{d}{r}}m(\xi)\mid \xi\in\Rd\}$ is $\gamma$-bounded, the operator $T_{m}:\Ellp(\Rd;X)\to \Ellq(\Rd;Y)$ is bounded. Then the following assertions hold:
\begin{enumerate}[$(1)$]
\item\label{item:conversetype1} If $X$ has cotype $2$, $Y = \C$,  and $q=2$, then $X$ has type $p$.
\item\label{item:conversetype2} If $Y$ has type $2$, $X = \C$, and $p=2$, then $Y$ has cotype $q$.
\item\label{item:conversetype3} If $Y = X^*$ has type $2$, and $q = p'$, then $X$ has type $p$.
\end{enumerate}
\end{proposition}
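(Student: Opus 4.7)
The plan is to follow the template of the proof of Proposition \ref{prop:converseFouriertype3}, using Lemma \ref{lem:estimate for sharpness-type} in place of Lemma \ref{lem:estimate for sharpness}. All three parts will be reduced to a single one-variable weighted Plancherel-type inequality. Starting from the bilinear bound \eqref{eq:toprovebiltype}, I would specialize as follows. In part \eqref{item:conversetype1}, where $Y=\C$ and $q=2$, take the supremum over $g\in\Sw(\Rd)$ with $\|g\|_{\Ell^2}\leq 1$; using that $\F$ is an $\Ell^2$-isometry and optimizing via Cauchy--Schwarz yields
\[
\Big(\int_{\Rd}|\xi|^{d-2d/p}\,\|\widehat{f}(\xi)\|_X^{2}\,\ud\xi\Big)^{1/2}\leq C\|f\|_{\Ell^p(\Rd;X)}.
\]
In part \eqref{item:conversetype2}, where $X=\C$ and $p=2$, the dual specialization (sup over $f$) gives the analogous inequality with $X$ replaced by $Y^{*}$ and $p$ replaced by $q'$. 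In part \eqref{item:conversetype3}, where $Y=X^{*}$ and $q=p'$, take $g=\iota\circ f$ with $\iota\colon X\hookrightarrow X^{**}=Y^{*}$ the canonical isometric embedding, so that $\|g\|_{\Ell^{q'}(Y^{*})}=\|f\|_{\Ell^p(X)}$ and $\|\widehat{g}(\xi)\|_{Y^{*}}=\|\widehat{f}(\xi)\|_X$, recovering the same estimate as in part \eqref{item:conversetype1}. In each case the exponent $s\in\{p,q'\}$ lies in $(1,2]$ and the weight is $|\xi|^{d-2d/s}$.

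With this reduction in hand, I would apply the inequality to the test function
\[
f_\omega(t)=\sum_{k=1}^n\omega_k\,\psi(t-t_k)\,z_k,
\]
where $\psi\in\Sw(\Rd)$ is a fixed nonzero bump with compact support in $B(0,\rho)$, the points $t_k\in\Rd$ are pairwise $2\rho$-separated (so the translates $\psi(\cdot-t_k)$ have pairwise disjoint supports), $z_1,\ldots,z_n\in Z$ are arbitrary vectors in the relevant space $Z\in\{X,Y^{*}\}$, and $(\omega_k)$ is an independent Steinhaus sequence uniform on the unit circle. Disjointness of supports gives $\|f_\omega\|_{\Ell^s(Z)}^{s}=\|\psi\|_{\Ell^s}^{s}\sum_k\|z_k\|_Z^{s}$ \emph{deterministically} in $\omega$, whereas $\widehat{f_\omega}(\xi)=\widehat{\psi}(\xi)\sum_k\omega_k\ue^{-2\pi\ui t_k\cdot\xi}z_k$. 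Square the specialized inequality, take $\mathbb{E}_\omega$ on both sides, and use the rotation-invariance of the Steinhaus sequence to obtain
\[
\mathbb{E}_\omega\Big\|\sum_k\omega_k\ue^{-2\pi\ui t_k\cdot\xi}z_k\Big\|_Z^{2}=\mathbb{E}_\omega\Big\|\sum_k\omega_k z_k\Big\|_Z^{2}
\]
uniformly in $\xi$; by Kahane--Khintchine this quantity is comparable to $(\mathbb{E}\|\sum_k\gamma_k z_k\|_Z)^2$ for a standard Gaussian sequence. The $\xi$-integral then factors out as $D_\psi:=\int_{\Rd}|\xi|^{d-2d/s}|\widehat{\psi}(\xi)|^{2}\,\ud\xi$, which is finite (at the origin since $s>1$; at infinity by Schwartz decay) and strictly positive for a generic choice of $\psi$. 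Rearranging yields
\[
\mathbb{E}\Big\|\sum_k\gamma_k z_k\Big\|_Z\leq C'\Big(\sum_k\|z_k\|_Z^{s}\Big)^{1/s},
\]
which is type $s$ of $Z$. This is type $p$ of $X$ in parts \eqref{item:conversetype1} and \eqref{item:conversetype3}; in part \eqref{item:conversetype2} it is type $q'$ of $Y^{*}$, whence cotype $q$ of $Y$ by the standard duality between type and cotype.

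The main obstacle is really just the bookkeeping: checking that the three specializations in the first step collapse to the same inequality, and verifying finiteness and positivity of $D_\psi$, both of which follow from $s>1$, guaranteed by the hypotheses $p>1$ and $q<\infty$. The key conceptual ingredient is the rotation-invariance of the Steinhaus variables, which converts the phase-modulated Fourier information pointwise in $\xi$ into a pure Gaussian norm on $Z$, after which the deterministic control of $\|f_\omega\|_{\Ell^s(Z)}$ coming from the disjointness of supports produces the desired $\ell^{s}$-bound.
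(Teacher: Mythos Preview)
Your proof is correct and follows essentially the same template as the paper's: reduce via Lemma~\ref{lem:estimate for sharpness-type} to the weighted $\Ell^2$-estimate \eqref{eq:Pittq=2}, then test against a sum of disjointly supported translates randomized by unimodular coefficients, exploiting rotation-invariance of the distribution to remove the Fourier phases. The paper uses integer-translated indicator functions (requiring a density step to leave $\Sw$) and restricts the Fourier integral to $[-\tfrac12,\tfrac12]^d$, whereas you use separated $\Cc^\infty$ bumps and integrate over all of $\R^d$ to extract the finite positive constant $D_\psi$; both choices work, and yours avoids the approximation step.

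One small point to add in part~\eqref{item:conversetype3}: to invoke Lemma~\ref{lem:estimate for sharpness-type} you need $X$ to have cotype~$2$, which is not among the stated hypotheses. It does follow, since $Y=X^*$ having type~$2$ forces $X^{**}$ (hence its subspace $X$) to have cotype~$2$; the paper makes this explicit, and you should too.
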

\begin{proof}
First consider \eqref{item:conversetype1}. From \eqref{eq:toprovebiltype} we find that for all $f\in \Sw(\R^d;X)$ and $g\in \Sw(\R^d)$,
\[
\int_{\R^d} |\xi|^{-\frac{d}{r}} \|\widehat{f}(\xi)\|_X |\widehat{g}(\xi)| \,\ud \xi \leq C\|f\|_{\Ellp(\Rd;X)} \|\wh g\|_{\Ell^{2}(\Rd)}.\]
Taking the supremum over all $g$ with $\|g\|_{\Ell^2(\R^d)} = \|\wh g\|_{\Ell^2(\R^d)} = 1$, we obtain
\begin{align}\label{eq:Pittq=2}
\|\xi\mapsto |\xi|^{-\frac{d}{r}} \widehat{f}(\xi)\|_{\Ell^2(\R^d;X)}  \leq C\|f\|_{\Ellp(\Rd;X)}.
\end{align}
By an approximation argument this estimate extends to all $f\in \Ellp(\Rd;X)$. In particular, let $f(t) := \sum_{|k|\leq n} \one_{[-\frac{1}{2},\frac{1}{2})^d}(t+k) x_k$ for $n\in\N$, $x_{1},\ldots, x_{n}\in X$ and $t\in\Rd$. Then
\[
\|\wh f(\xi)\| = \zeta(\xi) \Big\| \sum_{|k|\leq n} e_{k}(\xi)x_k\Big\|,
\]
where $\zeta (\xi) := \prod_{j=1}^d \frac{\abs{\sin(\pi\xi_j)}}{\pi |\xi_j|}$ and $e_k(\xi) := e^{2\pi i k\cdot \xi}$ for $\xi\in\Rd$. Since $\zeta(\xi) |\xi|^{-d/r}\geq c_d$ for some $c_{d}>0$ and all $\xi\in [\frac{1}{2},\frac{1}{2}]^d$, it follows from \eqref{eq:Pittq=2} that
\[
\Big\| \sum_{|k|\leq n} e_k x_k\Big\|_{\Elltwo([-\frac{1}{2},\frac{1}{2}]^d;X)} \leq C c_d^{-1} \Big(\sum_{|k|\leq n} \|x_k\|^p\Big)^{\frac1p}.
\]
Let $(\gamma_k)_{|k|\leq n}$ be a Gaussian sequence.
Replacing $x_k$ by $\gamma_k x_k$, and taking $\Elltwo(\Omega)$-norms, we find that
\[
\Big\| \sum_{|k|\leq n} \gamma_k x_k\Big\|_{\Elltwo(\Omega;X)} \leq C c_d^{-1} \Big(\sum_{|k|\leq n} \|x_k\|^p\Big)^{\frac1p}.
\]
Here we used the fact that for each $t\in [-\frac{1}{2},\frac{1}{2}]^d$, $(\gamma_k e_k(t))_{|k|\leq n}$ is identically distributed as $(\gamma_k)_{|k|\leq n}$.
This implies that $X$ has type $p$.

Case \eqref{item:conversetype2} can be proved in a similar way by reversing the roles of $f$ and $g$. Indeed, this gives that $Y^*$ has type $q'$ and hence $Y$ has cotype $q$.

In case \eqref{item:conversetype3} we let $f = g\in \Sw(\R^d;X)$ in \eqref{eq:toprovebiltype} and argue as below \eqref{eq:Pittq=2}. Here we use that $X\subseteq X^{**}$ has cotype $2$ (see \cite[Proposition 11.10]{DiJaTo95}).
\end{proof}

If $X = \C$, then \eqref{eq:Pittq=2} is a special case of Pitt's inequality (see \cite{Beckner08} and \cite{Benedetto-Heinig03}):
\begin{align}\label{eq:Pitt}
\|\xi\mapsto |\xi|^{-\alpha} \widehat{f}(\xi)\|_{\Ell^q(\R^d;X)}  \leq C\|s\mapsto |s|^{\beta} f(s)\|_{\Ellp(\Rd;X)},
\end{align}
where $1<p\leq q<\infty$, $0\leq \alpha<\frac{d}{q}$, $0\leq \beta<\frac{d}{p'}$ and $\frac{d}{p} + \frac{d}{q} +\beta-\alpha = d$.

Note that Theorem \ref{thm:sharpintegrability} and the proof of Proposition \ref{prop:conversetype} show that \eqref{eq:Pitt} holds if $X$ has type $p_{0}>p$ and cotype $2$. Moreover, by the proof above one sees that Pitt's inequality with $\beta = 0$ and $q=2$ implies that $X$ has type $p$ and $X^*$ has type $p$. Moreover, in the case $\alpha = \beta = 0$ and $q = p'$, Pitt's inequality is equivalent to $X$ having Fourier type $p$. It seems that a vector-valued analogue of Pitt's inequality has never been studied in detail. This leads to the following natural open problem:
\begin{problem}
Characterize those Banach spaces $X$ for which Pitt's inequality \eqref{eq:Pitt} holds.
\end{problem}
For $p$-convex and $q$-concave Banach lattices, \eqref{eq:Pitt} can be proved by reducing to the scalar case using the technique of \cite[Proposition 2.2]{GTK96}.

Next we show that a $\gamma$-boundedness assumption cannot be avoided in general. In the case where $p=q$ such a result is due Cl\'ement and Pr\"uss (see \cite[Chapter 5]{HyNeVeWe16}).
In Proposition \ref{Lp-Lq multipliers Fourier type} and Theorem \ref{Lp-Lq multipliers Fourier type2} we have seen that $\gamma$-boundedness is not needed for certain $\Ell^p$-$\Ell^q$-multiplier theorems. In the following result we derive the necessity of the $\gamma$-boundedness of $\{m(\xi)\mid \xi\in \R^d\}$ under special conditions on $m$.

\begin{proposition}\label{prop:gammabddconverse}
Assume $1<p\leq q<\infty$ and let $\frac{1}{r} = \frac1p - \frac1q$. Assume $m:\R^d\to \calL(X,Y)$ is such that there is a constant $a>0$ such that $m$ takes the constant value $m_{k}$ on each of the cubes $Q_{a,k} = a([0,1]^d +k)$ with $k\in \Z^d$. If $T_m:\Ell^p(\R^d;X)\to \Ell^q(\R^d;X)$ is bounded, then
\[
\gamma(\{m_k\mid k\in \Z\})\leq  R(\{m_k\mid k\in \Z\})\leq C_{d,p,q'} a^{-d/r} \|T_m\|
\]
for some $C_{d,p,q'}\geq 0$.
\end{proposition}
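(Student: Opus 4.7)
The proof will proceed via the transference result of Proposition \ref{prop:transference}, combined with a Rademacher randomization and the Kahane contraction principle. Since $m$ takes the constant value $m_k$ on $Q_{a,k} = a([0,1]^d + k)$, the averages appearing in Proposition \ref{prop:transference} are exactly
\[
a^{-d}\int_{[0,a]^d} m(t+ak)\,\ud t = m_k,
\]
so that proposition gives, for all $n\in\N$ and $(x_k)_{|k|\leq n}\subseteq X$,
\[
a^{d/r}\Big\|\sum_{|k|\leq n} e_k m_k x_k\Big\|_{L^q(\T^d;Y)} \leq C_{d,p,q'}\,\|T_m\|\,\Big\|\sum_{|k|\leq n} e_k x_k\Big\|_{L^p(\T^d;X)}.
\]

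Next I will substitute Rademacher-randomized vectors. Fix $k_1,\dots,k_n\in\Z^d$ and $x_1,\dots,x_n\in X$, and let $(r_j)_{j=1}^n$ be a Rademacher sequence on a probability space $(\Omega,\mathbb{P})$. Inserting $r_j(\omega)x_j$ at position $k_j$ and zero elsewhere into the inequality above gives, for each $\omega$,
\[
a^{d/r}\Big\|\sum_{j=1}^n r_j(\omega)e_{k_j} m_{k_j} x_j\Big\|_{L^q(\T^d;Y)} \leq C\,\|T_m\|\,\Big\|\sum_{j=1}^n r_j(\omega)e_{k_j} x_j\Big\|_{L^p(\T^d;X)}.
\]
Raising to the $p$-th power and taking $\mathbb{E}_\omega$, the right-hand side becomes, by Fubini and the Kahane contraction principle applied pointwise in $t$ with the unimodular scalars $e_{k_j}(t)$, at most a constant times $\mathbb{E}\big\|\sum_{j} r_j x_j\big\|_X^p$. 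For the left-hand side, the Kahane--Khintchine inequality for the $L^q(\T^d;Y)$-valued Rademacher sum lets me replace the $L^p(\Omega)$-norm by the equivalent $L^q(\Omega)$-norm; Fubini then moves the $\T^d$-integration outside, and the contraction principle applied with the unimodular scalars $\overline{e_{k_j}(t)}$ yields a lower bound by a constant multiple of $\mathbb{E}\big\|\sum_j r_j m_{k_j} x_j\big\|_Y^q$ raised to the $p/q$-th power.

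Combining these bounds, taking the $p$-th root, and applying Kahane--Khintchine once more to pass from $L^p(\Omega)$ and $L^q(\Omega)$ to $L^2(\Omega)$ on each side produces
\[
a^{d/r}\Big(\mathbb{E}\Big\|\sum_{j} r_j m_{k_j} x_j\Big\|_Y^2\Big)^{1/2} \leq C\,\|T_m\|\,\Big(\mathbb{E}\Big\|\sum_{j} r_j x_j\Big\|_X^2\Big)^{1/2},
\]
which is exactly $R(\{m_k\mid k\in\Z^d\}) \leq C_{d,p,q'}\,a^{-d/r}\|T_m\|$. Since $\gamma$-boundedness is always dominated by $R$-boundedness (see Subsection \ref{gamma-bddness}), the first inequality follows immediately.

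The main technical point is that the transference inequality mixes the $L^q(\T^d;Y)$ norm on one side with the $L^p(\T^d;X)$ norm on the other. The way around this is to take $L^p(\Omega)$-norms throughout after randomization, and then exploit the Kahane--Khintchine equivalence of all $L^s(\Omega)$-norms of Rademacher sums (valid both for scalar-valued sums and for sums with values in a Banach space such as $L^q(\T^d;Y)$) to upgrade the left side to an $L^q(\Omega)$-norm, at which point Fubini cleanly separates the Rademacher average from the torus integration.
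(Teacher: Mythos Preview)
Your argument is correct. The approach is essentially the same as the paper's---transference to $\T^d$ followed by a randomization---but the execution differs in one respect worth noting. The paper first invokes Remark \ref{rem:Torusp=q} (the embedding $\Ell^q(\T^d;Y)\hookrightarrow \Ell^p(\T^d;Y)$) to turn the transferred inequality into an $\Ell^p$--$\Ell^p$ estimate on the torus \emph{before} randomizing. After that reduction, one can randomize with independent Steinhaus variables $(\varepsilon_k)$ (uniform on the unit circle), for which $(\varepsilon_k e_k(t))_{k}$ and $(\varepsilon_k)_k$ are identically distributed for every $t$; this removes the characters without any appeal to the contraction principle or to Kahane--Khintchine. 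Your route keeps the $\Ell^q$--$\Ell^p$ mismatch and resolves it via Kahane--Khintchine (to change exponents on the $\Omega$-side) together with the complex Kahane contraction principle (to strip the unimodular factors $e_{k_j}(t)$). Both devices are standard and the resulting constants are harmless, so the proof goes through; the paper's version simply avoids those two extra ingredients by paying the cheap price of the embedding $\Ell^q(\T^d)\hookrightarrow \Ell^p(\T^d)$ up front.
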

In Example \ref{example sharpness R-bounds} we will provide an example where even the $\gamma$-boundedness of $\{|\xi|^{d/r} m(\xi)\mid \xi\in \R^d\}$ is necessary. However, in general such a result does not hold (see Remark \ref{rem:conversegammabdd}).
\begin{proof}
From Proposition \ref{prop:transference} and Remark \ref{rem:Torusp=q} we obtain that
\begin{equation}\label{eq:multiplierTorus}
\Big\|\sum_{|k|\leq n}  e_k m_k x_k\Big\|_{\Ell^{p}(\T^d;Y)} \leq a^{-d/r} C_{d,p,q'} \|T_m\| \, \Big\|\sum_{|k|\leq n} e_k x_k \Big\|_{\Ell^p(\T^d;X)}.
\end{equation}
Now the $R$-boundedness follows from \cite{Arendt-Bu02}. For convenience we include a short argument below. Let $(\varepsilon_k)_{|k|\leq n}$ be a sequence of independent random variables which are uniformly distributed on $\Omega:=[0,1]^d$. Replacing $x_k$ by $\varepsilon_k x_k$ in \eqref{eq:multiplierTorus} and integrating over $\Omega$ yields that
\begin{align*}
\Big\|\sum_{|k|\leq n} \varepsilon_k m_k x_k\Big\|_{\Ell^p(\Omega;Y)}  & =
\Big\|\sum_{|k|\leq n} \varepsilon_k e_k m_k x_k\Big\|_{\Ell^p(\Omega\times \T^d;Y)}
\\ & \leq a^{-d/r} C_{d,p,q'} \|T_m\| \, \Big\|\sum_{|k|\leq n} \varepsilon_k e_k x_k \Big\|_{\Ell^p(\Omega\times\T^d;X)}
\\ & \leq a^{-d/r} C_{d,p,q'} \|T_m\| \, \Big\|\sum_{|k|\leq n} \varepsilon_k x_k \Big\|_{\Ell^p(\Omega;X)}.
\end{align*}
Here we used the fact that for each $t\in \T^d$, $(\varepsilon_k e_k(t))_{|k|\leq n}$ and $(\varepsilon_k)_{|k|\leq n}$ are identically distributed.

Finally, the estimate for the $\gamma$-bound is well-known and follows from a randomization argument.
\end{proof}

The following example, which is similar to Example \ref{ex:Fouriertype}, shows that Theorem \ref{multiplier Bessel spaces} is sharp in a certain sense. In particular, it shows that the $\gamma$-boundedness condition is necessary in certain cases.

\begin{example}\label{example sharpness R-bounds}
Let $p\in[1,2]$, and for $q\in[2,\infty)$ let $r\in(1,\infty]$ be such that $\tfrac{1}{r}=\tfrac{1}{p}-\tfrac{1}{q}$. Let
$X:=\ell^{u}$ for $u\in[1,\infty)$. Let $(e_j)_{j\in\N_{0}}\subseteq X$ be the standard basis of $X$, and for $k\in\N_{0}$ let $S_{k}\in \La(X)$ be such that $S_{k}(e_{j}):= e_{j+k}$ for $j\in\N_{0}$. Let $m:\R\to\La(\ell^{u})$ be given by
$m(\xi):=\sum_{k=1}^{\infty} c_k \ind_{(k-1,k]}(\xi)S_{k}$ for $\xi\in\R$, with $c_k := k^{-\alpha} \log(k+1)^{-2}$ for $\alpha\geq 0$ arbitrary  but fixed for the moment.

Let $v\in[2,\infty]$ be such that $\tfrac{1}{v} = \big|\tfrac{1}{u} - \tfrac12\big|$. By \eqref{Kahane contraction principle} and \cite[Theorem 3.1]{vanGaans06} we find a constant $C\geq 0$ such that
\begin{align*}
\gamma(\{\abs{\xi}^{\frac1r} m(\xi)\mid \xi\in\R\}) &\leq \gamma(\{k^{\frac1r - \alpha} \log(k+1)^{-2} S_{k}\mid k\in\N\})
\\ & \leq C \|(k^{\frac1r - \alpha} \log(k+1)^{-2} \|S_k\|_{\La(X)})_{k=1}^\infty\|_{\ell^v} \\
&\leq C\Big(\sum_{k=1}^{\infty} k^{(\frac1r - \alpha)v} \log(k+1)^{-2v}  \Big)^{\frac1v}
\end{align*}
(with the obvious modification for $v=\infty$), and the latter expression is finite if and only if $\frac1r - \alpha\leq -\frac{1}{v}$, i.e.~if and only if $\alpha\geq \tfrac{1}{p}-\tfrac{1}{q}+\tfrac{1}{v}$.

If $u\in [p, 2]$ then $X$ is a $p$-convex and $q$-concave Banach lattice for all $q\geq p$, hence by Theorem \ref{multiplier Bessel spaces} we find that with $\alpha =\tfrac{1}{p}-\tfrac{1}{q}+\tfrac{1}{u} - \tfrac{1}{2}$, $T_m:\Ellp(\R;X)\to \Ellq(\R;X)$ is bounded for all $q\geq 2$. Note that for $q=2$ and $u>p$, $m$ is more singular than in Example \ref{ex:Fouriertype}, where we used Proposition \ref{Lp-Lq multipliers Fourier type} to obtain the boundedness of $T_{m}:\Ellp(\R;X)\to\Ell^{p'}(\R;X)$ for $\alpha=\tfrac{1}{p}-\tfrac{1}{p'}> \tfrac{1}{p}+\tfrac{1}{u}-1$. In the special case where $u=p$, both results can be combined using complex interpolation to obtain that $T_m:\Ell^p(\R;X)\to \Ell^q(\R;X)$ is bounded for all $q\in [2, p']$ if $\alpha = \tfrac{2}{p}-1$.

Note also that the difference between Proposition \ref{Lp-Lq multipliers Fourier type} and Theorem \ref{multiplier Bessel spaces} is most pronounced when $p=u=1$. In this case $X=\ell^{1}$ has trivial type and trivial Fourier type, but cotype $q=2$. Hence Proposition \ref{Lp-Lq multipliers Fourier type} only yields the boundedness of $T_{m}:\Ell^{1}(\R;X)\to\Ellinfty(\R;X)$ for $\alpha\geq 1$, which can also be obtained trivially since in this case $m$ is integrable. On the other hand, Theorem \ref{multiplier Bessel spaces} yields the nontrivial statement that $T_{m}:\Ell^{1}(\R;X)\to\Ell^{2}(\R;X)$ is bounded for $\alpha\geq 1$.

Now fix $q\in[2,\infty)$ and let $u\in [2, q]$. Then, similarly, with $\alpha = 1 - \frac{1}{q} - \frac1u$ the operator $T_m:\Elltwo(\R;X)\to \Ellq(\R;X)$ is bounded. In the special case that $u = q$, combined with Example \ref{ex:Fouriertype} we find that $T_m:\Ellp(\R;X)\to \Ellq(\R;X)$ is bounded for all $p\in [q',2]$ with $\alpha = \tfrac{2}{q'}-1$.

We now show that in certain cases the condition $\alpha \geq \frac{1}{p} - \frac{1}{q} + \Big|\frac{1}{u}  - \frac12\Big|$ for the $\gamma$-boundedness of $\{\abs{\xi}^{1/r}m(\xi)\mid \xi\in\R\}$ from above is sharp in order for $T_m:\Ellp(\R;X)\to \Ellq(\R;X)$ to be bounded. First suppose that $u\in [1, 2]$. For $k\in\N$ let $\ph_{k}:\R\to\C$ be such that $\widehat{\ph_{k}}=\one_{(k-1,k]}$, and for $n\in\N$ let $f := \sum_{k=n+1}^{2n} \ph_{k} e_0$. Then, as in Example \ref{ex:Fouriertype}, we find that
\begin{align*}
\|T_{m}(f)\|_{\Ellq(\R;X)} \geq  n^{\frac1u} \abs{c_{2n}}  \|\varphi_1\|_{\Ellq(\R)} = C n^{\frac{1}{u}} n^{-\alpha} \log(n)^{-2}
\end{align*}
and $\|f\|_{\Ellp(\R;X)} \leq C_{2} n^{1-\frac{1}{p}}$ for $p>1$. Therefore, $\alpha \geq \frac{1}{u} + \frac{1}{p}-1$. This shows that for $q=2$ and $u\in [1, 2]$, the condition on $\alpha$ which guarantees $\gamma$-boundedness is necessary. In the case $u\in [2, \infty)$, a duality argument shows that
$\alpha \geq \frac{1}{u'} + \frac{1}{q'}-1 = 1-\frac{1}{u} - \frac1q$, which shows that the $\gamma$-boundedness condition is also necessary if $p=2$ and $u\in [2, \infty)$.

Recall from the last part of Example \ref{ex:Fouriertype} that if $u\in [1, \infty)$ and $\alpha = \frac{2}{p}-1$ and $T_m:\Ell^p(\R;X)\to \Ell^2(\R;X)$ is bounded, then $\frac{1}{u}\leq 1-\frac1p + \alpha = \frac{1}{p}$ and thus $u\geq p$. Similarly, if $T_m:\Ell^2(\R;X)\to \Ell^q(\R;X)$ is bounded with $\alpha = 1-\frac{2}{q}$, then $\frac{1}{u'}\leq 1-\frac1{q'} + \alpha = \frac{1}{q'}$, and thus $u\leq q$.

By considering $m_n(\xi) := \sum_{k=1}^{n} \ind_{(k-1,k]}(\xi)S_{k}$ a similar argument yields that for $X = \ell^p$ with $p\in [1, 2]$ and $\frac1r = \frac1p - \frac12$, one has
\[\|T_{m_n}\|_{\calL(\Ell^p(\R;X), \Ell^2(\R;X))} \eqsim_p \gamma(\{|\xi|^{\frac1r} m_n(\xi)\mid \xi\in \R\}).\]
In particular this shows that the $\gamma$-bound provides the right factor in certain cases.
\end{example}

In the following remark we show that one cannot prove the $\gamma$-boundedness, or even the uniform boundedness, of $\{|\xi|^{d/r} m(\xi)\mid \xi\in \R^d\}$ in general.

\begin{remark}\label{rem:conversegammabdd}
Let $m:\R^d\setminus\{0\}\to \calL(X,Y)$ be $X$-strongly measurable. If $r<\infty$, then one cannot prove
\[
\sup\{|\xi|^{\sigma} \|m(\xi)\|\mid \xi\in \R^d\setminus\{0\}\}\leq C \|T_m\|
\]
for any $\sigma\in \R$.
Indeed, $\sigma\leq 0$ is not possible for the multiplier $m(\xi) := |\xi|^{-d/r}$ which is unbounded near zero.
For $\sigma>0$, one can use the same multiplier and a translation argument to deduce a contradiction. Moreover, for any nonzero multiplier $m$ one can consider $m_h = m(\cdot-h)$ for $h\in \R^d$. Then $\|T_m\| = \|T_{m_h}\|$ and it follows that
\[
|\xi_0+h|^{\sigma} \|m(\xi_0)\| = \sup\{|\xi|^{\sigma} \|m(\xi-h)\|\mid \xi\in \R^d\setminus\{0\}\}\ \leq C \|T_{m_h}\| = C \|T_{m}\|
\]
for all $\xi_0\in \R^d$. Letting $|h|\to \infty$ yields a contradiction whenever $m(\xi_0)\neq 0$.
\end{remark}

In the next remark we compare the results obtained in this section with the ones obtained by Fourier type methods.

\begin{remark}\
\begin{enumerate}[(i)]
\item Consider the case of scalar-valued multipliers $m$. If $X = Y$ has Fourier type $p_0>p$, then Theorem \ref{Lp-Lq multipliers Fourier type2} states that $T_m\in \calL(\Ell^p(\R^d;X),\Ell^{p'}(\R^d);X))$ for all $m\in \Ell^{r,\infty}(\R^d)$, where $\frac{1}{r} = \frac{1}{p} - \frac{1}{p'}$. This class of multipliers is larger than the one obtained in Theorem \ref{thm:sharpintegrability} since
\[
\sup\{ |\xi|^{\frac{d}{r}} m(\xi)\mid \xi\in \R^d\} \leq C_d \|m\|_{\Ell^{r,\infty}(\R^d)}.
\]
On the other hand, the geometric conditions in Theorem \ref{thm:sharpintegrability} are less restrictive. Indeed, Fourier type $p_0$ implies that $X$ has type $p_0$ and cotype $p_0'$, but the converse is false.
\item An important difference between Proposition \ref{Lp-Lq multipliers Fourier type} and
Theorem \ref{Lp-Lq multipliers Fourier type2} and the results obtained in Subsections \ref{sec:Bessel1} and \ref{subsec:convexconcave} is that the former do not require any $\gamma$-boundedness condition. Of course the assumptions on type and cotype are less restrictive, and furthermore by \cite{Hytonen-Veraar09} the $\gamma$-boundedness can be avoided if $X$ has cotype $u$ and $Y$ has type $v$ and $|\cdot|^{\frac{d}{r}} m(\cdot)\in B^{\frac{d}{w}}_{w,1}(\R^d;\calL(X,Y))$ for $\frac1w = \frac{1}{u}-\frac{1}{v}$. In this case
\[
\gamma(\{ |\xi|^{\frac{d}{r}} m(\xi)\mid\xi\in \R^d\}) \leq \||\cdot|^{\frac{d}{r}} m(\cdot)\|_{\Be^{\frac{d}{w}}_{w,1}(\R^d;\calL(X,Y))}.
\]
\end{enumerate}
\end{remark}

\section{Extrapolation}\label{extrapolation}

In this section we briefly discuss an extension of the extrapolation results of H\"ormander in \cite{Hormander60}.

Let $m:\Rd\setminus\{0\}\to\La(X,Y)$ be a strongly measurable map of moderate growth at zero and infinity.
For $r\in [1, \infty)$, $\varrho\in [1, \infty)$ and $n\in \N$, consider the following variants of the Mihlin--H\"ormander condition:
\begin{enumerate}
\item[(M1)$_{r,\varrho,n}$]
There exists a constant $M_1\geq 0$ such that for all multi-indices $|\alpha|\leq n$,
\begin{align*}
R^{|\alpha| + \frac{d}{r} - \frac{d}{\varrho}} \Big(\int_{R\leq |\xi|<2R} \|\partial^{\alpha} m(\xi)x\|^{\varrho} \, d\xi\Big)^{1/\varrho}\leq M_1\|x\|\qquad (x\in X, R>0).
\end{align*}
\item[(M2)$_{r,\varrho,n}$]
There exists a constant $M_2\geq 0$ such that for all multi-indices $|\alpha|\leq n$
\begin{align*}
R^{|\alpha| + \frac{d}{r} - \frac{d}{\varrho}} \Big(\int_{R\leq |\xi|<2R} \|\partial^{\alpha} m(\xi)^*y^*\|^{\varrho} \, d\xi\Big)^{1/\varrho}\leq M_2\|y^*\|\qquad (y^*\in Y^*,  R>0).
\end{align*}
\end{enumerate}
In the case $\varrho = 2$, $r=1$, $X = Y = \R$, condition (M1)$_{r,\varrho,n}$ reduces to the classical H\"ormander condition in \cite[Theorem 2.5]{Hormander60} (see also \cite[Theorem 5.2.7]{Grafakos08}).

Now we can formulate the main result of this section. It extends \cite[Theorem 2.5]{Hormander60} to the vector-valued setting and to general exponents $p,q\in (1, \infty)$.

\begin{theorem}[Extrapolation]\label{thm:extrapolmult}
Let $p_0,q_0,r\in [1, \infty]$ with $r\neq 1$ be such that $\frac{1}{p_0} - \frac{1}{q_0} = \frac{1}{r}$. Let $m:\Rd\setminus\{0\}\to\La(X,Y)$ be a strongly measurable map of moderate growth at zero and infinity. Suppose that $T_m:\Ell^{p_0}(\R^d;X) \to \Ell^{q_0}(\R^d;Y)$ is bounded of norm $B$.
\begin{enumerate}[$(1)$]
\item Suppose that $p_0\in (1, \infty]$, $Y$ has Fourier type $\varrho\in [1, 2]$ with $\varrho\leq r$, and (M1)$_{r, \varrho, n}$ holds for $n := \lfloor \frac{d}{\varrho} - \frac{d}{r} \rfloor+1$.
Then $T_{m}\in\La(\Ellp(\Rd;X),\Ellq(\Rd;Y))$ and
\begin{align}\label{eq:toprovebddextr}
\|T_{m}\|_{\La(\Ellp(\Rd;X),\Ellq(\Rd;Y))} \leq C_{p_{0},q_{0},p,d} (M_1 + B)
\end{align}
for all $(p,q)$ such that $p\in (1, p_0]$ and $\tfrac{1}{p}-\tfrac{1}{q}=\tfrac{1}{r}$, where $C_{p_0, q_0, p, d}\sim (p-1)^{-1}$ as $p\downarrow 1$.
\item Suppose that $q_0\in (1, \infty)$, $X$ has Fourier type $\varrho\in [1, 2]$ with $\varrho\leq r$, and (M2)$_{r, \varrho, n}$ holds for $n := \lfloor \frac{d}{\varrho} - \frac{d}{r} \rfloor+1$. Then $T_{m}\in\La(\Ellp(\Rd;X),\Ellq(\Rd;Y))$ and
\begin{align}\label{eq:toprovebddextr2}
\|T_m\|_{\calL(\Ellp(\R^d;X),\Ellq(\R^d;Y))} \leq C_{p_{0},q_{0},q,d} (M_2 + B),
\end{align}
for all $(p,q)$ satisfying $q\in [q_0,\infty)$ and $\tfrac{1}{p}-\tfrac{1}{q}=\tfrac{1}{r}$, where $C_{p_0, q_0, q, d}\sim q$ as $q\uparrow \infty$.
\end{enumerate}
\end{theorem}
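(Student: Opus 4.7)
The plan is a vector-valued Calder\'on--Zygmund extrapolation, combining a dyadic frequency decomposition of $m$ (exploiting the Fourier type) with real interpolation against the hypothesis $T_m\in\La(\Ell^{p_{0}}(\Rd;X),\Ell^{q_{0}}(\Rd;Y))$. For part (1), fix a smooth partition of unity $(\psi_j)_{j\in\Z}$ on $\Rd\setminus\{0\}$ with $\psi_j$ supported in the dyadic annulus $\{2^{j-1}\leq|\xi|\leq 2^{j+1}\}$, set $m_j:=m\psi_j$, and let $K_j:=\F^{-1}(m_j)$. Since $Y$ has Fourier type $\varrho$, the Hausdorff--Young inequality for $Y$-valued functions combined with the identity $\F^{-1}(\partial^{\alpha}m_j)(s)=(-2\pi\ui s)^{\alpha}K_j(s)$ yields, for every $x\in X$ and every $k\geq0$,
\begin{align*}
\bigl\||\cdot|^{k}K_j(\cdot)x\bigr\|_{\Ell^{\varrho'}(\Rd;Y)} \lesssim \sum_{|\alpha|=k}\|\partial^{\alpha}m_j(\cdot)x\|_{\Ell^{\varrho}(\Rd;Y)}.
\end{align*}
Combining this with (M1)$_{r,\varrho,n}$ and the scaling $\|\partial^{\beta}\psi_j\|_{\infty}\lesssim 2^{-j|\beta|}$ gives, via Leibniz,
\begin{align*}
\|\partial^{\alpha}m_j(\cdot)x\|_{\Ell^{\varrho}(\Rd;Y)}\lesssim M_{1}\,2^{-j(|\alpha|+d/r-d/\varrho)}\|x\|_{X}\qquad(|\alpha|\leq n).
\end{align*}

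I would next turn these kernel bounds into the vector-valued H\"ormander cancellation condition. Using the $k=0$ estimate together with H\"older's inequality on the ball $\{|s|\leq 2^{-j}\}$, and the $k=n$ weighted estimate on its complement, together with the analogous bounds for the increments $K_j(\cdot-t)-K_j$ (obtained from $|\ue^{2\pi\ui\xi\cdot t}-1|\leq\min(2,2\pi|\xi|\,|t|)$), then summing the dyadic pieces after splitting at $2^{j_{0}}\sim|t|^{-1}$, one would verify
\begin{align*}
\sup_{t\neq 0}\;\sup_{\|x\|_{X}\leq 1}\int_{|s|>2|t|}\|K(s-t)x-K(s)x\|_{Y}\,\ud s\leq C M_{1}.
\end{align*}
The constraint $n>d/\varrho-d/r$, i.e.\ exactly $n=\lfloor d/\varrho-d/r\rfloor+1$, is forced by the need to sum the high-frequency tail $j>j_{0}$; the low-frequency tail is handled by the mean-value bound on $K_j(\cdot-t)-K_j$. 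With this H\"ormander condition and the hypothesis $T_{m}\in\La(\Ell^{p_{0}}(\Rd;X),\Ell^{q_{0}}(\Rd;Y))$ in hand, a vector-valued Calder\'on--Zygmund decomposition at the level $\alpha^{p_{0}'}/\|f\|_{1}^{p_{0}'-1}$ adapted to the $(\Ell^{p_{0}},\Ell^{q_{0}})$-scaling (so that the good part $g$ satisfies $\|g\|_{p_{0}}\lesssim\alpha^{1-1/q_{0}}\|f\|_{1}^{1/q_{0}}$) yields the weak endpoint $T_{m}\in\La(\Ell^{1}(\Rd;X),\Ell^{r',\infty}(\Rd;Y))$ with norm $\lesssim M_{1}+B$. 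Marcinkiewicz interpolation between this endpoint and the strong hypothesis (both pairs lie on the line $1/p-1/q=1/r$) produces \eqref{eq:toprovebddextr} for all $p\in(1,p_{0}]$, with the blow-up $C_{p_{0},q_{0},p,d}\sim(p-1)^{-1}$ as $p\downarrow 1$ arising from the Marcinkiewicz constant near the weak endpoint.

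Part (2) follows by duality. The adjoint $T_{m}^{*}=T_{m^{*}}$, where $m^{*}(\xi):=m(\xi)^{*}$, is bounded from $\Ell^{q_{0}'}(\Rd;Y^{*})$ to $\Ell^{p_{0}'}(\Rd;X^{*})$ with norm $B$, and (M2)$_{r,\varrho,n}$ for $m$ is precisely (M1)$_{r,\varrho,n}$ for $m^{*}$. Since $X^{*}$ inherits Fourier type $\varrho$ from $X$ (by the duality of the Fourier transform $\F:\Ell^{\varrho}\to\Ell^{\varrho'}$), part (1) applied to $T_{m^{*}}$ gives bounds $\Ell^{\tilde p}(\Rd;Y^{*})\to\Ell^{\tilde q}(\Rd;X^{*})$ for every $\tilde p\in(1,q_{0}']$ on the line $1/\tilde p-1/\tilde q=1/r$; dualizing these back yields \eqref{eq:toprovebddextr2}, with the dependence $C_{p_{0},q_{0},q,d}\sim q$ as $q\uparrow\infty$ corresponding to $\tilde p\downarrow 1$ in part (1). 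The main obstacle is the careful interplay of H\"older exponents and dyadic scales in establishing the H\"ormander cancellation condition uniformly in $t$; once that is done, the vector-valued Calder\'on--Zygmund decomposition and the Marcinkiewicz interpolation are technical but follow the standard template, here adapted to the $(\Ell^{p_{0}},\Ell^{q_{0}})$-scaling rather than the diagonal $(\Ell^{p_{0}},\Ell^{p_{0}})$ case.
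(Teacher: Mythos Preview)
The paper does not actually prove this theorem here; it explicitly defers the argument to \cite{Rozendaal-Veraar16Besov}, stating only that it ``is based on the classical argument in the case $p=q$ (see \cite[Theorem 5.2.7]{Grafakos08})'' together with ``an operator-valued analogue of \cite[Theorem 2.2]{Hormander60}''. Your sketch follows precisely this indicated route: a dyadic decomposition of $m$ combined with the $Y$-valued Hausdorff--Young inequality (Fourier type $\varrho$) to verify the vector-valued H\"ormander kernel condition, a Calder\'on--Zygmund decomposition adapted to the off-diagonal $(\Ell^{p_0},\Ell^{q_0})$ scaling to obtain the weak $(1,r')$ endpoint (this is the analogue of H\"ormander's Theorem~2.2), Marcinkiewicz interpolation for part~(1), and duality for part~(2). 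So your proposal is consistent with the approach the paper describes, and there is nothing further to compare against within this paper.
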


The proof will be presented in \cite{Rozendaal-Veraar16Besov}.
It is based on the classical argument in the case $p=q$ (see \cite[Theorem 5.2.7]{Grafakos08}). One of the other ingredients is an operator-valued analogue of \cite[Theorem 2.2]{Hormander60}.

As a consequence we obtain the following extrapolation result:

\begin{corollary}\label{cor:extrapol m uniform}
Let $p_0,q_0,r\in [1, \infty]$ with $q_0\neq1 $ and $r\neq 1$ be such that $\frac{1}{p_0} - \frac{1}{q_0} = \frac{1}{r}$.
Let $X$ and $Y$ both have Fourier type $\varrho\in [1, 2]$ $\varrho\leq r$ and let $n := \lfloor \frac{d}{\varrho} - \frac{d}{r} \rfloor+1$.
Let $m:\Rd\setminus\{0\}\to\La(X,Y)$ be such that, for all multi-indices $|\alpha|\leq n$,
\begin{align}\label{eq:mihlin}
\|\partial^{\alpha} m(\xi)\|\leq C|\xi|^{-|\alpha| - \frac{d}{r}}, \ \ \xi\in \R^d\setminus \{0\}.
\end{align}
Suppose that $T_m:\Ell^{p_0}(\Rd;X) \to \Ell^{q_0}(\Rd;Y)$ is bounded of norm $B$.
Then, for all exponents $p$ and $q$ satisfying $1<p\leq q<\infty$ and $\frac1p-\frac1q=\frac1q$, $T_m:\Ellp(\R^d;X) \to \Ellq(\R^d;Y)$ is bounded and
\begin{align*}
\|T_m\|_{\La(\Ellp(\Rd;X),\Ellq(\Rd;Y))} \leq C_{p,q,d} (B+C)
\end{align*}
for some constant $C_{p,q,d}\geq 0$.
\end{corollary}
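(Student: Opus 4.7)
The plan is to derive the corollary from Theorem \ref{thm:extrapolmult} by verifying that the pointwise Mihlin condition \eqref{eq:mihlin} implies the integral smoothness conditions (M1)$_{r,\varrho,n}$ and (M2)$_{r,\varrho,n}$, and then combining parts (1) and (2) of that theorem to cover the full range of admissible exponents.

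First I would observe that $m$ has moderate growth at zero and infinity: \eqref{eq:mihlin} with $\alpha=0$ gives $\|m(\xi)\|_{\calL(X,Y)} \leq C|\xi|^{-d/r}$, so $|\xi|^{\beta}(1+|\xi|)^{-2\beta}\|m(\xi)\| \in \Ell^{1}(\Rd)$ for any $\beta > d-d/r$, and hence $T_m:\dot{\Sw}(\Rd;X)\to\Sw'(\Rd;Y)$ is well-defined. Next I would verify (M1)$_{r,\varrho,n}$ by a direct computation: for a multi-index $|\alpha| \leq n$ and $x \in X$, \eqref{eq:mihlin} yields $\|\partial^{\alpha} m(\xi) x\|_{Y} \leq C|\xi|^{-|\alpha|-d/r}\|x\|_{X}$, and integration in polar coordinates on the annulus $\{R\leq|\xi|<2R\}$ produces
\begin{align*}
\Big(\int_{R\leq |\xi|<2R}\|\partial^{\alpha} m(\xi)x\|_{Y}^{\varrho}\,d\xi\Big)^{1/\varrho}\leq C'\|x\|_{X}\,R^{d/\varrho-|\alpha|-d/r}
\end{align*}
(with the logarithmic borderline case $d=\varrho(|\alpha|+d/r)$ giving the same power of $R$). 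Multiplying by $R^{|\alpha|+d/r-d/\varrho}$ yields a bound independent of $R$, which is exactly (M1)$_{r,\varrho,n}$. The identical calculation applied to $\partial^{\alpha} m(\xi)^{*}y^{*}$, together with the identity $\|\partial^{\alpha} m(\xi)^{*}\|_{\calL(Y^{*},X^{*})}=\|\partial^{\alpha} m(\xi)\|_{\calL(X,Y)}$, yields (M2)$_{r,\varrho,n}$.

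Finally I would apply Theorem \ref{thm:extrapolmult} to conclude. Given any admissible pair $(p,q)$ with $1<p\leq q<\infty$ and $\tfrac{1}{p}-\tfrac{1}{q}=\tfrac{1}{r}$, I would observe that either $p\leq p_{0}$ or $q\geq q_{0}$; indeed, $p>p_{0}$ forces $\tfrac{1}{q}=\tfrac{1}{p}-\tfrac{1}{r}<\tfrac{1}{p_{0}}-\tfrac{1}{r}=\tfrac{1}{q_{0}}$ and hence $q>q_{0}$. In the first case I would invoke Theorem \ref{thm:extrapolmult}(1), which requires $p_{0}>1$; the edge case $p_{0}=1$ forces $q_{0}=r'<\infty$ (as $r\neq 1$), so every admissible $q$ strictly exceeds $q_{0}$ and one falls back on Theorem \ref{thm:extrapolmult}(2) instead. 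Symmetrically, if $q_{0}=\infty$ then $p_{0}=r>1$, every admissible $p$ is strictly less than $p_{0}$, and Theorem \ref{thm:extrapolmult}(1) applies. In all cases one obtains $\|T_m\|_{\La(\Ellp(\Rd;X),\Ellq(\Rd;Y))}\leq C_{p,q,d}(B+C)$ for a constant depending only on $p,q,d$ (absorbing the dependence on $p_0,q_0$).

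The main obstacle is not analytical but organizational: making sure that the two extrapolation ranges furnished by parts (1) and (2) of Theorem \ref{thm:extrapolmult} together exhaust the one-parameter family of admissible pairs $(p,q)$, and checking that the two degenerate cases $p_0=1$ and $q_0=\infty$ are each covered by exactly one part. Once this dichotomy is laid out, the verification of the integral conditions (M1) and (M2) from the pointwise bound \eqref{eq:mihlin} is mechanical.
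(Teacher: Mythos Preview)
Your proposal is correct and follows essentially the same approach as the paper: verify that the pointwise Mihlin bound \eqref{eq:mihlin} implies both (M1)$_{r,\varrho,n}$ and (M2)$_{r,\varrho,n}$, then apply parts (1) and (2) of Theorem~\ref{thm:extrapolmult} according to a case split on the position of $(p,q)$ relative to $(p_0,q_0)$. Your case analysis (via the dichotomy $p\leq p_0$ or $q\geq q_0$, with separate handling of $p_0=1$ and $q_0=\infty$) is organized slightly differently from the paper's enumeration over the possible ranges of $p_0,q_0$, but the content is the same.
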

In particular, one can always take $\varrho= 1$ and $n= \lfloor \frac{d}{r'}\rfloor+1$ in the above results.
\begin{proof}
Note that, for $\xi\in\Rd$, $x\in X$ and $y^{*}\in Y^{*}$, $\|m(\xi)x\|_{Y}\leq \|m(\xi)\|_{\La(X,Y)}\, \|x\|_{X}$ and $\|m^* y^*\|_{X^{*}}\leq \|m(\xi)\|_{\La(X,Y)} \, \|y^*\|_{Y^{*}}$, and similarly for the derivatives of $m$.
Therefore, the result follows from Theorem  \ref{thm:extrapolmult} (1) and (2). Indeed,
\begin{enumerate}[(i)]
\item $p_0, q_0\in (1, \infty)$: apply (1) and (2).
\item $p_0\in (1, \infty]$, $q_0 = \infty$: apply (1).
\item $p_0=1$, $q_0\in (1, \infty)$: apply (2).
\item $p_0=1$, $q_0 =\infty$ is not possible, since $r\neq 1$.
\item $p_0=1$, $q_0 = 1$ is not possible, since $q_0\neq 1$.
\end{enumerate}
 \end{proof}

If $p_0 = q_0=1$, then Theorem \ref{thm:extrapolmult} and Corollary \ref{cor:extrapol m uniform} are true with $\varrho=1$ (see \cite{Rozendaal-Veraar16Besov}).

Next we consider several applications of these extrapolation results.

In \cite{Lizorkin67} an $\Ellp$-$\Ellq$-Fourier multiplier result was proved assuming differentiability up to order $d$. Moreover, in \cite{SaTaTl10} an extension is discussed in the case $d=1$. We prove a similar result in the Hilbert space case in arbitrary dimensions assuming less differentiability.

\begin{example}\label{ex:scalar extrapolation}
Let $X$ and $Y$ be Hilbert spaces. First consider $r\in (2, \infty]$ and let $n:=\lfloor d(\tfrac{1}{2}-\tfrac{1}{r}) \rfloor+1$ and assume that $m:\R^d\setminus\{0\}\to \C$ is such that for all $|\alpha|\leq n$
\begin{equation}\label{eq:mihlin2}
|\partial^{\alpha} m(\xi)|\leq C|\xi|^{-|\alpha| - \frac{d}{r}}\qquad( \xi\in \R^d\setminus \{0\}).
\end{equation}
Then $T_m:\Ell^p(\R^d;X)\to \Ell^q(\R^d;X)$ is bounded for all $1<p\leq q<\infty$ such that $\tfrac{1}{p}-\tfrac{1}{q}=\tfrac{1}{r}$.
Indeed, we first prove the boundedness of $T_m$ in special cases.
If $r= \infty$, then one can take $p_0 = q_0 = 2$ and the boundedness of $T_m$ from $\Ell^2(\R^d;X)$ into $\Ell^2(\R^d;Y)$ follows from Plancherel's isometry and the uniform boundedness of $m$. If $r<\infty$, then we can find $p_{0}\in(1,2)$ and $q_{0}\in(2,\infty)$ such that $\tfrac{1}{p_{0}}-\tfrac{1}{q_{0}}=\tfrac{1}{r}$. Since $X$ and $Y$ have Fourier type $2$ the boundedness of $T_m$ from $\Ell^{p_0}(\R^d;X)$ into $\Ell^{q_0}(\R^d;Y)$ follows from Theorem \ref{Lp-Lq multipliers Fourier type2}.
Now Corollary \ref{cor:extrapol m uniform} can be applied to extrapolate the boundedness to the remaining cases.

Next let $r\in(1,2]$. Then all $p,q\in(1,\infty)$ satisfying $\tfrac{1}{p}-\tfrac{1}{q}=\tfrac{1}{r}$ are such that $p\in(1,2)$ and $q\in(2,\infty)$. Hence each $m$ satisfying \eqref{eq:mihlin2} for $\alpha=0$ yields a bounded operator $T_{m}:\Ellp(\Rd;X)\to\Ellq(\Rd;Y)$ for all such $p,q$ by Theorem \ref{Lp-Lq multipliers Fourier type2}.
\end{example}

\begin{remark}\label{rem:less derivatives}
Even in the case where $X = Y = \C$ (or $X$ and $Y$ are Hilbert spaces) the result in Corollary \ref{cor:extrapol m uniform} with $\rho=2$ was only known for $r=\infty$. The point is that we only need derivatives up to order $\lfloor d(\tfrac{1}{2} - \tfrac{1}{r})\rfloor+1$ if $r>2$, whereas the classical condition requires derivatives up to  $\lfloor d/2 \rfloor+1$. However, if $m$ would have derivatives up to order $n:=\lfloor d/2\rfloor+1$ for which \eqref{eq:mihlin} holds, then the multiplier $M(\xi) := |\xi|^{d/r} m(\xi)$ would satisfy the classical Mihlin condition:
for all $|\alpha|\leq n$
\begin{align*}
\|\partial^{\alpha} M(\xi)\|\leq C|\xi|^{-|\alpha|} \qquad(\xi\in \Rd\setminus \{0\}).
\end{align*}
Therefore, $T_{M}\in\La(\Ellp(\Rd),\Ellp(\Rd))$ for all $p\in (1, \infty)$. Consequently we find that, for any $1<p\leq q<\infty$ with $\frac{1}{p}-\frac{1}{q} = \frac1r$,
\[
\|T_m\|_{\calL(\Ellp(\Rd),\Ellq(\Rd))}\leq \|T_M\|_{\calL(\Ellp(\Rd),\Ellp(\Rd))} \|T_{\abs{\xi}^{-d/r}}\|_{\calL(\Ellp(\Rd),\Ellq(\Rd))} <\infty,
\]
where we used the Hardy-Littlewood-Sobolev inequality (see Example \ref{ex:HLS}). For $r\leq 2$ we have already observed in Example \ref{ex:scalar extrapolation} that in the Hilbertian setting no derivatives are required.

Thus in the scalar or Hilbertian setting we emphasize that the only new point is that less derivatives are required of the multiplier for $p<q$.

In the case where $X$ and $Y$ are general Banach spaces, the assertion about $T_{\abs{\xi}^{-d/r}}$ remains true. However, the boundedness of $T_M$ is not as simple to obtain and in general requires geometric conditions on $X$ (even if $m$ is scalar-valued) and an $R$-boundedness version of the Mihlin condition (see \cite{Kunstmann-Weis04}).
\end{remark}

Another application of Corollary \ref{cor:extrapol m uniform} is that we can extrapolate the result of Theorem \ref{thm:sharpintegrability} to other values of $p$ and $q$. A similar result holds for Theorem \ref{multiplier Bessel spaces}.

\begin{corollary}\label{cor:ExtraAppl}
Let $X$ be a Banach space with type $p_0\in(1,2]$ and $Y$ a Banach space with cotype $q_0\in[2,\infty)$, and let $p_1\in (1, p_0)$ and $q_1\in (q_0, \infty)$, $r\in[1,\infty]$ be such that $\tfrac{1}{r}=\tfrac{1}{p_1}-\tfrac{1}{q_1}$. Let $m:\Rd\setminus\{0\}\to\La(X,Y)$ be such that
$\{\abs{\xi}^{\frac{d}{r}}m(\xi)\mid \xi\in\Rd\setminus\{0\}\}\subseteq\La(X,Y)$ is $\gamma$-bounded.

Assume that $X$ and $Y$ both have Fourier type $\varrho\in [1, 2]$ with $\varrho\leq r$ and let $ n:= \lfloor d(\tfrac{1}{\varrho}-\tfrac{1}{r})\rfloor+1$.
Assume for all multi-indices $|\alpha|\leq n$
\begin{equation}\label{eq:mihlinIk}
\|\partial^{\alpha} m(\xi)\|\leq C|\xi|^{-|\alpha| - \frac{d}{r}}\qquad (\xi \in \Rd\setminus\{0\}).
\end{equation}
Then $T_{m}$ extends uniquely to a bounded map $\widetilde{T_{m}}\in\La(\Ellp(\Rd;X),\Ellq(\Rd;Y))$ for all $1<p\leq q<\infty$ satisfying $\frac1p-\frac1q=\frac{1}{r}$.
\end{corollary}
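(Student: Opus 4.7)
The plan is to combine Theorem \ref{thm:sharpintegrability} with the extrapolation result in Corollary \ref{cor:extrapol m uniform}: use the former to produce a single ``seed'' boundedness statement at the pair $(p_1,q_1)$, and use the latter to spread that boundedness to all pairs $(p,q)$ with $\tfrac{1}{p}-\tfrac{1}{q}=\tfrac{1}{r}$.

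First, I would verify the hypotheses of Theorem \ref{thm:sharpintegrability}. By assumption $X$ has type $p_0\in(1,2]$ and $Y$ has cotype $q_0\in[2,\infty)$, and the exponents satisfy $p_1\in(1,p_0)$, $q_1\in(q_0,\infty)$, $\tfrac{1}{r}=\tfrac{1}{p_1}-\tfrac{1}{q_1}$. The $\gamma$-boundedness of $\{|\xi|^{d/r}m(\xi)\mid \xi\in\R^d\setminus\{0\}\}$ is also assumed. Theorem \ref{thm:sharpintegrability} then yields a constant $B\geq 0$ (depending on the $\gamma$-bound) such that $T_m$ extends uniquely to a bounded operator
\begin{equation*}
\widetilde{T_m}\in \La(\Ell^{p_1}(\R^d;X),\Ell^{q_1}(\R^d;Y)),\qquad \|\widetilde{T_m}\|\leq B.
\end{equation*}

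Next, I would check the hypotheses of Corollary \ref{cor:extrapol m uniform} with the choice $(p_0,q_0):=(p_1,q_1)$ (renaming to match the notation of that corollary). Both $X$ and $Y$ are assumed to have Fourier type $\varrho\in[1,2]$ with $\varrho\leq r$, and the pointwise symbol estimates \eqref{eq:mihlinIk} are exactly the ``Mihlin-type'' hypothesis of Corollary \ref{cor:extrapol m uniform} for multi-indices $|\alpha|\leq n=\lfloor d(\tfrac{1}{\varrho}-\tfrac{1}{r})\rfloor+1$. Since $\tfrac{1}{p_1}-\tfrac{1}{q_1}=\tfrac{1}{r}$ and $p_1\in(1,\infty)$, $q_1\in(1,\infty)$ (with $r\neq 1$ since $r\geq \varrho\geq 1$ and $r=1$ would force $p_1=1$), the seed pair satisfies the required conditions. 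Corollary \ref{cor:extrapol m uniform} then gives, for every $(p,q)$ with $1<p\leq q<\infty$ and $\tfrac{1}{p}-\tfrac{1}{q}=\tfrac{1}{r}$, the bound
\begin{equation*}
\|T_m\|_{\La(\Ell^p(\R^d;X),\Ell^q(\R^d;Y))}\leq C_{p,q,d}(B+C),
\end{equation*}
where $C$ is the constant in \eqref{eq:mihlinIk}.

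The argument is essentially a bookkeeping combination of two previously established results, so I do not anticipate a genuine obstacle. The one point that deserves care is the edge case $r=1$: this is only possible if $\varrho=1$ as well, and then there is no derivative hypothesis beyond $n=1$ to check; moreover, Theorem \ref{thm:sharpintegrability} still applies since the $\gamma$-boundedness hypothesis is unchanged, and Corollary \ref{cor:extrapol m uniform} accommodates every seed pair with $\tfrac{1}{p_1}-\tfrac{1}{q_1}=\tfrac{1}{r}=1$ (so $p_1=1$ would be excluded, but this is avoided by the assumption $p_1>1$). Hence the conclusion extends to the full range of exponents as stated.
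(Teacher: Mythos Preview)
Your approach is exactly the paper's: apply Theorem \ref{thm:sharpintegrability} at the seed pair $(p_1,q_1)$ and then extrapolate via Corollary \ref{cor:extrapol m uniform}. One small remark: your closing discussion of the edge case $r=1$ is unnecessary and slightly off, since $p_1\in(1,p_0)$ and $q_1\in(q_0,\infty)$ force $\tfrac{1}{r}=\tfrac{1}{p_1}-\tfrac{1}{q_1}<1$, so $r>1$ automatically (and Corollary \ref{cor:extrapol m uniform} in fact excludes $r=1$).
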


\begin{proof}
The case where $p=p_1$ and $q=q_1$ follows from Theorem \ref{thm:sharpintegrability}. The result for the remaining values of $p$ and $q$ follows from Corollary \ref{cor:extrapol m uniform}.
\end{proof}

\bibliographystyle{plain}
\bibliography{Bibliografie}

\end{document}